\newtheorem{theorem}{Theorem}[section]
\newtheorem{alphatheorem}{Theorem}
\theoremstyle{definition}
\newtheorem{definition}[theorem]{Definition}
\newtheorem{proposition}[theorem]{Proposition}
\newtheorem*{proposition*}{Proposition}
\newtheorem*{observation*}{Observation}
\newtheorem*{claim*}{Claim}
\newtheorem*{lemma*}{Lemma}
\newtheorem{example}[theorem]{Example}
\newtheorem{corollary}[theorem]{Corollary}
\newtheorem{remark}[theorem]{Remark}
\newtheorem*{remark*}{Remark}
\newtheorem*{conjecture*}{Conjecture}
\newtheorem*{convention*}{Convention}
\theoremstyle{plain}
\newtheorem{lemma}[theorem]{Lemma}
\newcommand{\bra}[1]{ \left( #1 \right) }
\renewcommand{\tilde}{\widetilde}
\newcommand{\abs}[1]{\left|#1\right|}
\newcommand{\fp}[1]{\left\{ #1 \right\}}
\newcommand{\norm}[1]{\left\lVert #1 \right\rVert}
\renewcommand{\b}{\beta}
\newcommand{\cN}{\mathcal{N}}
\newcommand{\fpa}[1]{\left\lVert #1 \right\rVert_{\mathbb{R}/\mathbb{Z}}}
\newcommand{\e}{\varepsilon}
\renewcommand{\a}{\alpha}
\renewcommand{\b}{\beta}
\renewcommand{\b}{\beta}
\newcommand{\NN}{\mathbb{N}}
\newcommand{\QQ}{\mathbb{Q}}
\newcommand{\PP}{\mathbb{P}}
\DeclareMathOperator*{\EE}{\scalerel*{\mathbb{E}}{\textstyle\sum}}
\newcommand{\ZZ}{\mathbb{Z}}
\newcommand{\RR}{\mathbb{R}}
\newcommand{\CC}{\mathbb{C}}
\newcommand{\TT}{\mathbb{T}}
\newcommand{\cA}{\mathcal{A}}
\newcommand{\cB}{\mathcal{B}}
\newcommand{\cM}{\mathcal{M}}
\newcommand{\floor}[1]{\left\lfloor #1 \right\rfloor}
\newcommand{\la}{\langle}
\newcommand{\ra}{\rangle}
\newcommand{\set}[2]{\left\{ #1 \ \middle| \ #2 \right\} }
\newcommand{\Fix}{\mathrm{Fix}}
\newcommand{\aveN}{\frac1N\sum_{n=1}^N}
\newcommand{\parbreak}[1]{
\begin{center}
***
\end{center}
}
\newcommand{\ceil}[1]{\left\lceil #1 \right\rceil}
\title[Automatic sequences as good weights]{Automatic sequences \\as good weights for ergodic theorems}
\date{}
\author[T.\ Eisner]{Tanja Eisner}
\address[T.\ Eisner]{Institute of Mathematics, University of Leipzig, P.O. Box 100 920, 04009 Leipzig, Germany}
\email{eisner@math.uni-leipzig.de}
\author[J.\ Konieczny]{Jakub Konieczny}
\address[J.\ Konieczny]{Einstein Institute of Mathematics Edmond J. Safra Campus, The Hebrew University of Jerusalem Givat Ram. Jerusalem, 9190401, Israel}
\address{Faculty of Mathematics and Computer Science, Jagiellonian University in Krak\'{o}w, \L{}ojasiewicza 6, 30-348 Krak\'{o}w, Poland}
\email{jakub.konieczny@gmail.com}
\keywords{automatic sequence, ergodic theorem, Wiener--Wintner}
\subjclass[2010]{Primary: 11B85, 37A30. Secondary: 47A35, 68Q45.}
\begin{document}

\maketitle 

\begin{abstract}

We study correlation estimates of automatic sequences (that is, sequences computable by finite automata) with polynomial phases. As a consequence, we provide a new class of good weights for classical and polynomial ergodic theorems. 
We show that automatic sequences are good weights in $L^2$ for polynomial averages and totally ergodic systems. For totally balanced automatic sequences (i.e., sequences converging to zero in mean along arithmetic progressions) the pointwise weighted ergodic theorem in $L^1$ holds. Moreover, invertible automatic sequences are good weights for the pointwise polynomial ergodic theorem in $L^r$, $r>1$. 
\end{abstract}

\makeatletter{}\section{Introduction}

The study of weighted ergodic averages goes back to Wiener and Wintner \cite{WienerWintner} who showed that sequences of the form $(\lambda^n)$ for $\lambda$ in the unit circle $\TT$ form a set of good weights for the pointwise ergodic theorem in $L^\infty$ with the set of convergence being independent of $\lambda$. Recall that a sequence $(a_n)\subset\CC$ is called \emph{a good weight for the pointwise ergodic theorem in $L^p$} if for every measure-preserving system $(X,\mu,T)$ and every $f\in L^p(\mu)$ the weighted averages 

$$
\aveN a_nT^nf
$$
converge a.e.. Note that some authors call the above averages ``modulated'' instead of ``weighted'', see, e.g., Berend,  Lin, Rosenblatt, Tempelman \cite{BLRT}. 

Since then quite a few classes of good weights have been discovered. The celebrated return time theorem of Bourgain \cite{BFKO} states that for an ergodic measure-preserving system $(X,\mu,T)$, every $f\in L^\infty(\mu)$ and a.e.\ $x\in X$, the sequence $(f(T^nx))$ is a good weight in $L^\infty$ (and hence in $L^1$ by the maximal inequality). Lesigne \cite{Lesigne90,Lesigne93} has extended the linear weights $(\lambda^n)$ in the Wiener-Wintner result to polynomial ones $(\lambda^{p(n)})$, see also Frantzikinakis \cite{Frantz} for the study of uniform convergence, and the general case of so-called nilsequences was treated in Host, Kra \cite{HostKra09} with the uniform convergence version in Eisner, Zorin-Kranich \cite{EisnerZorin}. Thus, for good systems (in this case nilsystems), Bourgain's return times result holds everywhere instead of almost everywhere. 

Until now there is a limited number of different examples of good weights known.
Such examples are the von Mangoldt and the M\"obius function treated in Wierdl \cite{Wierdl88} and  El Abdalaoui, Ku\l{}aga-Przymus, Lema\'{n}czyk, de la Rue \cite{AHKLR}, respectively, $q$-multiplicative sequences as in Lesigne, Mauduit, Moss\'{e} \cite{LesigneMauduitMosse-1994,LesigneMauduit-1996}, and
 Hardy fields weights studied in Eisner, Krause \cite{EisnerKrause}, see also Krause, Zorin-Kranich \cite{KrauseZorin} for a random version. Note that the argument in \cite{AHKLR} also shows that the M\"obius function times a nilsequence is also a good weight. 

For more results on convergence of weighted ergodic averages see, e.g., Bellow, Losert \cite{BellowLosert85}, Assani  \cite{Assani-book,Assani98}, {C}\"omez, Lin, Olsen \cite{CLO}, Lin, Olsen, Tempelman \cite{LOT}, Berend,  Lin, Rosenblatt, Tempelman \cite{BLRT}, Host, Kra \cite{HostKra09}, Chu \cite{Chu09}, Assani, Presser \cite{AssaniPresser}, Assani, Moore \cite{AssaniMoore}, Zorin-Kranich \cite{Zorin15}, Eisner \cite{Eisner13,Eisner15}, see also e.g. Cuny, Weber \cite{CunyWeber} for related results.

The purpose of this paper is to give a new class of examples of good weights for the pointwise ergodic theorem and its polynomial version,  namely automatic sequences. 

\mbox{}\\ 

Automatic sequences are simply the sequences computable by finite automata (see Section \ref{sec:Definitions} for a precise definition). They are of considerable interest in computer science, as they give rise to one of the weakest notions of computability. For extensive background, see \cite{AlloucheShallit-book}. 

Computations of exponential sums involving automatic sequences are a standard tool, often used to solve number theoretic problems. Given how ubiquitous the many variants of the circle method are in modern number theory, this comes as no surprise. For a good source of background and discussion, see \cite{Mullner-thesis}.

Before we move on with the discussion, let us note that in the simplest instance, one may consider the average value $\EE_{n<N} a(n)$ (for notation, see the end of this section), where $a(n)$ is an automatic sequence. Unfortunately, these do not converge in general as $N \to \infty$, although the logarithmic averages $\frac{1}{\log N}\sum_{n<N} a(n)/(n+1)$ do. Some of the technical complications in this paper can be traced back to this kind of behaviour.

Perhaps the simplest non-trivial result in this vein is due to Gelfond \cite{Gelfond-1968}, who showed that for the Thue--Morse sequence $(t(n))$ it holds that $\abs{\EE_{n<N} t(n) e(n \alpha)} \ll N^{-c}$ uniformly in $\alpha$, and gave the optimal value of $c$. Here, $t$ is given by $t(n) = (-1)^{s_2(n)}$, where $s_2(n)$ denotes the sum of binary digits of $n$. For similar results involving the Rudin--Shapiro sequence, see \cite{MauduitSarkozy-1998} and references therein. The Rudin--Shapiro sequence is given by $r(n) = -1$ if the number of times the pattern $11$ appears in binary expansion of $n$ is odd, and $r(n) = +1$ if the said number is even. In \cite{HostKra09} it is shown that the Thue--Morse sequence is a good sequence of weights for mean convergence of multiple ergodic averages. In \cite{Konieczny-2017+}, Konieczny obtained bounds on the Gowers norms of the Thue--Morse and Rudin--Shapiro sequences, which imply that these sequences do not correlate with any polynomial phases.

For the purposes of this paper, we will study correlations of fairly general classes of automatic sequences with linear phases (Prop.\ \ref{prop:cancellation-qualitative}, \ref{prop:cancellation-quantitative-A}) and polynomial phases (Cor.\ \ref{prop:cancellation-qualitative-polynomial}, Prop.\ \ref{prop:cancel-quant-poly}). Similar results for Kloosterman sums are obtained in \cite{DrappeauMullner-2017+}.
Related work for infinite automata can be found in \cite{Mauduit-2006}. 

Another interesting result related to sums involving automatic sequences is due to M\"{u}llner \cite{Mullner-2017+}, who proved that the Sarnak Conjecture holds for automatic sequences. In particular, if $a(n)$ is an automatic sequence then 
$
	\EE_{n<N} a(n) \mu(n) \to 0
$
as $N \to \infty$, where $\mu$ denotes the M\"{o}bius function. For related results see also \cite{MartinMauduitRivat-2014}.

\mbox{}\\

The first of our main results shows that automatic sequences are good weights for $L^2$-convergence for totally ergodic systems. Similar results for a related class of $q$-multiplicative sequences are obtained in \cite{LesigneMauduitMosse-1994}, \cite{LesigneMauduit-1996}.

\begin{alphatheorem}\label{thm:A}
Let $a\colon \NN \to \CC$ be an automatic sequence and let $p \in \ZZ[x]$ be a nonconstant polynomial with $p(\NN_0) \subset \NN_0$. Then, for any totally ergodic measure-preserving system $(X,\mu, T)$ and any {$f \in L^1(\mu)$} with $\int_X f d\mu = 0$ we have

$$
	\EE_{n < N} a(n) T^{p(n)} f \to 0 \text{ in } L^2 \text{ as } N \to \infty.
$$
\end{alphatheorem}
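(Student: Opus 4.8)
The plan is to reduce the statement to a correlation estimate between the automatic sequence $a$ and the ``linear-like'' structure present in a totally ergodic system, via a standard spectral/van der Corput argument. First I would invoke the spectral theorem: it suffices to treat the case where $f$ is an eigenfunction-free part, i.e.\ to bound $\EE_{n<N} a(n) e(p(n)\theta)$ uniformly, plus handle the finitely many eigenvalues. More precisely, the $L^2$ norm squared of $\EE_{n<N} a(n) T^{p(n)}f$ expands, after introducing the spectral measure $\sigma_f$ of $f$ with respect to $T$, as $\int_{\TT} \abs{\EE_{n<N} a(n) e(p(n)\theta)}^2 \, d\sigma_f(\theta)$. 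Because $(X,\mu,T)$ is totally ergodic and $\int f\,d\mu=0$, the spectral measure $\sigma_f$ has no atoms at rational points of $\TT$ (total ergodicity rules out roots of unity in the point spectrum on the orthocomplement of constants). So the whole game is to show that $\abs{\EE_{n<N} a(n) e(p(n)\theta)} \to 0$ for every irrational $\theta$, together with a dominated-convergence argument to pass the limit inside the integral.

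The key step is therefore the qualitative polynomial correlation bound, which is exactly Corollary~\ref{prop:cancellation-qualitative-polynomial} from the earlier part of the paper: for an automatic sequence $a$, a nonconstant $p\in\ZZ[x]$, and $\theta$ irrational, $\EE_{n<N} a(n) e(p(n)\theta) \to 0$. Granting this, the integrand $\abs{\EE_{n<N} a(n) e(p(n)\theta)}^2$ is bounded by $\norm{a}_\infty^2$ (automatic sequences are bounded), converges to $0$ pointwise on the irrationals, and the set of rationals has $\sigma_f$-measure zero by total ergodicity; dominated convergence then gives $\int_\TT \abs{\EE_{n<N} a(n) e(p(n)\theta)}^2 d\sigma_f(\theta) \to 0$, which is the claim. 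A small wrinkle is that $f \in L^1$ rather than $L^2$, so the spectral theorem is not directly available; I would first approximate $f$ in $L^1$ by a bounded (hence $L^2$) function $g$ with $\int g\,d\mu=0$, handle $g$ as above, and note that $\norm{\EE_{n<N} a(n) T^{p(n)}(f-g)}_{L^1} \le \norm{a}_\infty \norm{f-g}_{L^1}$ — but this only controls the $L^1$ norm of the tail, not the $L^2$ norm. To fix this, one should instead interpolate: bound the $L^2$ norm of $\EE_{n<N} a(n) T^{p(n)}(f-g)$ by $\norm{a}_\infty$ times an $L^2$-type maximal/square-function quantity, or simply observe that since $g$ can be taken in $L^\infty$, the relevant estimate for $g$ is in $L^2$ and the contribution of $f-g$ can be absorbed using $\norm{T^{p(n)}h}_{L^2}=\norm{h}_{L^2}$ only if $h\in L^2$; the cleanest route is to note $L^\infty$ functions are dense in $L^1$ only in $L^1$-norm, so one genuinely needs the statement to be interpreted with $f\in L^1$ but the convergence still in $L^2$, which forces $f$ to lie in $L^2$ after all on the relevant support — I would simply first reduce to $f\in L^\infty$ by a truncation $f = f\mathbf{1}_{|f|\le M} + f\mathbf{1}_{|f|>M}$ and re-center, absorbing the small tail directly in $L^2$ using that $T$ is measure-preserving so $\norm{T^{p(n)}(\cdot)}_2$ is an isometry and the tail is small in $L^2$ once $M$ is large (here using $f\in L^1$ only guarantees the tail is small in $L^1$, so one does need $f\in L^2$; I would state the reduction carefully, likely concluding that the $L^1$ hypothesis should be read together with the conclusion as: the averages, which a priori make sense in $L^1$, in fact converge to $0$ in $L^2$, and the proof proceeds by the above for $f\in L^\infty$ and a density argument valid because the averages are uniformly bounded in the operator norm $L^2\to L^2$ when restricted to $L^2\cap L^\infty$).

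The main obstacle is entirely contained in establishing the polynomial correlation bound $\EE_{n<N}a(n)e(p(n)\theta)\to 0$ uniformly enough to run dominated convergence — but since this is precisely Corollary~\ref{prop:cancellation-qualitative-polynomial}, which the excerpt permits me to assume, the remaining work in this theorem is the soft spectral-theory packaging plus the bookkeeping of total ergodicity ensuring no rational atoms. I expect the subtle point to be the $L^1$-versus-$L^2$ density reduction described above, and making sure the ``independence of $\sigma_f$'' in the dominated convergence step is genuinely justified by total ergodicity (each root of unity $e(j/q)$ is not an eigenvalue of $T$ on $L^2_0$, so $\sigma_f(\{$rationals$\})=0$).
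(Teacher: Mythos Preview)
Your proposal is correct and follows essentially the same route as the paper: reduce via the spectral theorem (Proposition~\ref{prop:reduction:B}) to the polynomial correlation estimate (Corollary~\ref{prop:cancellation-qualitative-polynomial}), using total ergodicity to ensure the spectral measure has no mass on rationals and dominated convergence to conclude. The paper's own proof is a one-line citation of these two results, and it leaves the $L^1$-versus-$L^2$ reduction you worry about entirely implicit (indeed, Proposition~\ref{prop:reduction:B} is stated only for $f\in L^2$).
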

\begin{remark*}
	Using the above result as a black-box and passing to arithmetic progressions, one can derive a slightly more general analogue, where the condition  $p \in \ZZ[x]$ is replaced with $p \in \QQ[x]$ (and the condition $p(\NN_0) \subset \NN_0$ is kept unchanged). This covers polynomials such as $p(x) = x(x+1)/2$. 
\end{remark*}

For almost everywhere convergence, we need to impose some mild conditions on the automatic sequence. Let us say that a (bounded) sequence $a\colon \NN_0 \to \CC$ is \emph{balanced} if $\EE_{n<N} a(n) \to 0$ as $N \to \infty$, and \emph{totally balanced} if $\EE_{n < N} a(qn+r) \to 0$ as $N \to \infty$ for any $q \in \NN,\ r \in \NN_0$.  Equivalently, $a(n)$ is totally balanced if it does not correlate with any periodic sequence $b(n)$: $ \EE_{n < N} a(n) b(n) \to 0$ as $N \to \infty$. For instance, $\bra{ (-1)^n}$ is balanced but not totally balanced, while $(t(n))$ is totally balanced.

\begin{alphatheorem}\label{thm:B}
	Let $a\colon \NN \to \CC$ be a totally balanced automatic sequence. Then, for any ergodic measure-preserving system $(X,\mu, T)$ and any {$f \in L^1(\mu)$} we have
	
$$
	\EE_{n < N} a(n) T^n f(x) \to 0 \text{ for a.e. } x \in X \text{ as } N \to \infty.
$$
\end{alphatheorem}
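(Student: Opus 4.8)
The plan is to derive Theorem~\ref{thm:B} from the linear cancellation estimates, Propositions~\ref{prop:cancellation-qualitative} and~\ref{prop:cancellation-quantitative-A}, via the classical spectral-theorem route to pointwise convergence. Since $a$ takes only finitely many values,
$$
	\sup_N \abs{ \EE_{n<N} a(n) T^n f(x) } \le \norm{a}_\infty \, \sup_N \EE_{n<N} \abs{ f(T^n x) },
$$
whose right-hand side is of weak type $(1,1)$ (and of strong type $(r,r)$ for $r>1$) by the maximal ergodic theorem. Hence the maximal operator associated with the weighted averages is bounded, and by the Banach principle it suffices to prove a.e.\ convergence to $0$ for $f$ in a dense subset of $L^1(\mu)$. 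As such a set I would take the finite linear combinations of eigenfunctions of $T$ together with the functions in $L^2(\mu)$ orthogonal to the Kronecker factor $\mathcal{K}$; since $L^2(\mu)=\mathcal{K}\oplus\mathcal{K}^{\perp}$ for an ergodic system and $\mathcal{K}$ is the closed linear span of the eigenfunctions, this set is dense in $L^2(\mu)$, hence in $L^1(\mu)$.

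For an eigenfunction $f$, say $Tf=e(\theta)f$, ergodicity lets us take $\abs{f}\equiv1$, so $f$ is bounded and $\EE_{n<N}a(n)T^nf=f\cdot\EE_{n<N}a(n)e(n\theta)$. When $\theta$ is irrational this tends to $0$ by Proposition~\ref{prop:cancellation-qualitative}; when $\theta$ is rational it tends to $0$ because $a$, being totally balanced, does not correlate with periodic sequences. An infinite combination $f=\sum_j c_j f_j$ of eigenfunctions is handled by truncating the sum and controlling the tail $\sum_{j>J}c_j f_j$ uniformly in $N$ via the $L^2$-boundedness of the maximal operator.

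The substantial case is $f\in L^2(\mu)$ with $f\perp\mathcal{K}$, for which the spectral measure $\sigma_f$ on $\TT$ is continuous. By the spectral theorem,
$$
	\norm{ \EE_{n<N} a(n) T^n f }_{L^2(\mu)}^2 = \int_{\TT} \abs{ \EE_{n<N} a(n) e(n\theta) }^2 \, d\sigma_f(\theta).
$$
Because $a$ is totally balanced, its structured (periodic/profinite) component is annihilated, so Proposition~\ref{prop:cancellation-quantitative-A} should yield $\abs{\EE_{n<N}a(n)e(n\theta)}\ll N^{-c}$ with $c=c(a)>0$ uniform in $\theta\in\TT$ (if the available estimate is uniform only away from rationals of bounded denominator, first delete a small $\sigma_f$-neighbourhood of those finitely many points, which is legitimate since $\sigma_f$ is non-atomic). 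Integrating against $\sigma_f$ gives $\norm{ \EE_{n<N} a(n) T^n f }_{L^2(\mu)}\ll N^{-c}\norm{f}_{L^2(\mu)}$. Now fix $\beta>1/(2c)$ and put $N_k:=\floor{k^{\beta}}$, so that $\sum_k N_k^{-2c}<\infty$; Chebyshev's inequality and the Borel--Cantelli lemma give $\EE_{n<N_k}a(n)T^nf\to0$ a.e.\ as $k\to\infty$. For $N\in[N_k,N_{k+1})$ write $\EE_{n<N}a(n)T^nf=\tfrac{N_k}{N}\EE_{n<N_k}a(n)T^nf+\tfrac1N\sum_{N_k\le n<N}a(n)T^nf$ and bound the last term in modulus by $\norm{a}_\infty\bigl(\tfrac1N\sum_{n<N}\abs{f(T^nx)}-\tfrac{N_k}{N}\cdot\tfrac1{N_k}\sum_{n<N_k}\abs{f(T^nx)}\bigr)$, which tends to $0$ a.e.\ by Birkhoff's theorem applied to $\abs{f}$, as $N_k/N\to1$. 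Hence $\EE_{n<N}a(n)T^nf\to0$ a.e.\ along all $N$, completing the dense class and therefore the proof.

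The main obstacle is to make the quantitative linear estimate uniform enough in $\theta$ to survive integration against an arbitrary continuous spectral measure — this is precisely where total balance must be exploited to eliminate the structured part of $a$ — and the upgrade from $L^2$-norm decay to a.e.\ convergence, which forces $N_{k+1}/N_k\to1$ and thereby makes the quantitative rate (rather than mere qualitative $L^2$ convergence) indispensable for the Borel--Cantelli step. The maximal inequality, the Kronecker decomposition, and the Banach principle are routine.
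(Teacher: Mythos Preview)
Your argument is correct and follows essentially the same route as the paper: both rely on the uniform linear estimate of Proposition~\ref{prop:cancellation-quantitative-A} combined with the standard spectral-theorem/maximal-inequality reduction. The paper simply black-boxes this reduction as Proposition~\ref{prop:reduction:A} (citing \cite{Fan-2017+} and \cite{AHKLR}), whereas you have written out its proof in the linear case.

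One simplification is available: your Kronecker decomposition into eigenfunctions and $\mathcal{K}^\perp$ is unnecessary. Proposition~\ref{prop:cancellation-quantitative-A} gives the bound $\abs{\EE_{n<N}a(n)e(n\theta)}\ll N^{-c}$ \emph{uniformly} in $\theta\in\RR$ (the supremum is explicitly over all $\alpha$), so the spectral-theorem computation $\norm{\EE_{n<N}a(n)T^nf}_2^2=\int_{\TT}\abs{\EE_{n<N}a(n)e(n\theta)}^2\,d\sigma_f(\theta)\ll N^{-2c}\norm{f}_2^2$ holds for \emph{every} $f\in L^2(\mu)$, with no need to distinguish discrete from continuous spectrum or to treat eigenfunctions separately. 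Your hedging about ``uniform only away from rationals of bounded denominator'' is therefore moot, and the parenthetical workaround can be dropped. The remainder of your argument (Borel--Cantelli along $N_k=\lfloor k^\beta\rfloor$, interpolation via Birkhoff applied to $\abs{f}$, and the maximal-inequality/Banach-principle passage to $L^1$) is exactly the content behind Proposition~\ref{prop:reduction:A}.
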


\begin{remark*}
Note that since automatic sequences are bounded, it suffices to show a.e.\ convergence for $L^2$-functions in both theorems by the classical maximal inequality.

For polynomial averages, by Bourgain's maximal inequality  for polynomials \cite{Bourgain89}, a.e.~convergence in $L^2$ implies a.e.~convergence in $L^p$ for every $p>1$. Note that even unweighted monomial averages diverge in $L^1$ (and that monomials have even a much stronger property of being universally bad in $L^1$) by Buczolich, Mauldin \cite{BuczolichMauldin} and LaVictoire \cite{LaVictoire}.
\end{remark*}

Theorem \ref{thm:B} holds (with a natural modification of the limit) also when $a(n)$ is a sum of a totally balanced sequence and a periodic sequence.
Unfortunately, not every $k$-automatic sequence admits a decomposition into a periodic and totally balanced part, as is seen from the example of the sequence $a(n) = (-1)^{\nu_2(n)}$ (where $\nu_2(n)$ denotes the largest power of $2$ dividing $n$). However, \emph{invertible} automatic sequences (see Section \ref{sec:Invertible} for details) admit such a decomposition. In fact, for invertible sequences we obtain a considerably stronger conclusion.

\begin{alphatheorem}\label{thm:D}
	Let $a\colon \NN_0 \to \CC$ be an invertible automatic sequence {and let $p \in \ZZ[x]$ be a  
polynomial with $p(\NN_0) \subset \NN_0$}. Then, for any ergodic measure-preserving system $(X,\mu, T)$ and any {$f \in L^r(\mu)$, $r>1$,} {the averages

$$
	\EE_{n < N} a(n) T^{p(n)} f(x) $$
converge a.e.\ as $N \to \infty$. If $p$ is linear then the convergence holds for any $f \in L^1(\mu)$.}
\end{alphatheorem}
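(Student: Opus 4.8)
The plan is to leverage the structural decomposition of invertible automatic sequences into a periodic part and a totally balanced part, and then treat each part separately. First I would invoke the result promised in Section~\ref{sec:Invertible}: an invertible automatic sequence $a$ can be written as $a(n) = b(n) + c(n)$, where $b$ is periodic (say with period $Q$) and $c$ is totally balanced and automatic. By linearity of the ergodic averages, it suffices to prove a.e.\ convergence of $\EE_{n<N} b(n) T^{p(n)} f(x)$ and of $\EE_{n<N} c(n) T^{p(n)} f(x)$ separately. For the periodic part, I would split the sum over $n$ according to the residue class of $n$ modulo $Q$: writing $n = Qm + s$ for $s \in \{0,1,\dots,Q-1\}$, the average becomes a finite linear combination (with constant coefficients $b(s)$) of polynomial ergodic averages $\EE_{m} T^{p(Qm+s)} f(x)$ along the new polynomials $p_s(m) := p(Qm+s) \in \ZZ[x]$, which converge a.e.\ for $f \in L^r(\mu)$, $r>1$, by Bourgain's pointwise polynomial ergodic theorem \cite{Bourgain89} (and for $f \in L^1$ when $p$, hence each $p_s$, is linear, by Birkhoff). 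One has to be slightly careful matching the truncation parameter $N$ with the truncation in $m$, but this is routine since the tails are $O(1/N)$ in sup-norm.

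The substance of the argument is the totally balanced part, where the goal is a.e.\ convergence to zero. Here the strategy is the standard one for establishing that a bounded sequence is a good weight: combine a \emph{maximal inequality} with convergence on a \emph{dense class}. By the remark following Theorem~\ref{thm:B} it suffices to work with $f \in L^2(\mu)$, and by Bourgain's maximal inequality for polynomial averages it suffices to get a.e.\ convergence for $f$ in an $L^2$-dense subspace. I would use the spectral decomposition $f = f_{\mathrm{rat}} + f_{\mathrm{wm}}$ relative to the rational spectrum of $T$: functions whose spectral measure is supported on roots of unity, and the orthogonal complement. On the rational part, $T^{p(n)} f_{\mathrm{rat}}$ is essentially periodic in $n$, so $\EE_{n<N} c(n) T^{p(n)} f_{\mathrm{rat}}$ is controlled by correlations of $c$ with periodic sequences, which vanish because $c$ is totally balanced — after restricting to arithmetic progressions along which the relevant eigenfunctions are constant, this reduces to $\EE_{n} c(qn+r) \to 0$. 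On the weak-mixing part, one needs the quantitative correlation estimates of automatic sequences with polynomial phases (Cor.~\ref{prop:cancellation-qualitative-polynomial}, Prop.~\ref{prop:cancel-quant-poly}); feeding these into a van der Corput / Hilbert-space argument (as in the proof of Theorem~\ref{thm:A}, or via an oscillation/variational estimate) yields convergence to zero a.e.\ along the full sequence $N \to \infty$, not merely in $L^2$. This is where the power-type decay $\ll N^{-c}$ for the exponential sums is crucial: it lets one sum over a sparse sequence $N_k = \lfloor (1+\eta)^k \rfloor$ with a Borel--Cantelli argument and then fill in the gaps using the maximal function.

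I expect the main obstacle to be upgrading from norm convergence (which is essentially Theorem~\ref{thm:A}, adapted to the totally balanced setting without the mean-zero hypothesis on $f$, using that $c$ is balanced) to \emph{pointwise} convergence along the full sequence of $N$'s. The subtlety flagged in the introduction — that $\EE_{n<N} a(n)$ need not converge, only its logarithmic averages do — means the automatic sequence $c$ itself oscillates at all scales, so one cannot naively reduce to a lacunary subsequence of $N$ without a genuine oscillation or short-interval estimate for the weighted averages. Concretely, I would need a bound of the form $\sup_{N \le M \le 2N} \abs{\EE_{n<M} c(n) e(n\alpha) - \EE_{n<N} c(n) e(n\alpha)} \ll N^{-c'}$ uniformly in $\alpha$ (or its polynomial analogue), which should follow from the automaton structure by the same transfer-matrix/substitution analysis used to prove Prop.~\ref{prop:cancellation-quantitative-A} and Prop.~\ref{prop:cancel-quant-poly}, but makes the bookkeeping heavier. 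Once that short-interval estimate is in hand, the Borel--Cantelli step closes the gap between the lacunary subsequence and general $N$, and combining with the periodic part finishes the proof; the improvement to $L^1$ for linear $p$ comes from the $L^1$ maximal inequality for $\EE_{n<N} T^n$ together with the fact that the dense-class argument above only used $L^2$ on a subspace whose closure can be taken in $L^1$ when $p$ is linear.
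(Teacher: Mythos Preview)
Your overall architecture is right and matches the paper: decompose $a = a_{\mathrm{per}} + a_{\mathrm{bal}}$ via Proposition~\ref{prop:invert-decomp}, treat the periodic piece with Bourgain's pointwise polynomial ergodic theorem (Birkhoff when $p$ is linear), and for the totally balanced piece use the quantitative exponential-sum bound of Proposition~\ref{prop:cancel-quant-poly}. The paper's proof is then literally one line: feed that bound into Proposition~\ref{prop:reduction:A}, which is quoted as a black box from \cite{Fan-2017+} and \cite{AHKLR}. You are essentially trying to re-prove Proposition~\ref{prop:reduction:A} by hand, which is not wrong but is where your write-up becomes more tangled than necessary.

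Two concrete simplifications. First, the spectral splitting $f = f_{\mathrm{rat}} + f_{\mathrm{wm}}$ is superfluous. Once you have $\sup_{\alpha}\bigl|\EE_{n<N} c(n) e(\alpha p(n))\bigr| \ll N^{-c}$ uniformly, the spectral theorem gives $\bigl\|\EE_{n<N} c(n) T^{p(n)} f\bigr\|_{L^2} \ll N^{-c}\|f\|_{L^2}$ for \emph{every} $f \in L^2$ at once, with no need to separate rational eigenvalues; Borel--Cantelli along $N_k = \lfloor\rho^k\rfloor$ then yields a.e.\ convergence to $0$ along that subsequence. Second, the obstacle you flag --- needing a short-interval estimate $\sup_{N\le M\le 2N}|\cdots| \ll N^{-c'}$ for the weights --- is not actually there. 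For $f \in L^\infty$ and $N_k \le N < N_{k+1}$ one has the trivial bound
\[
\Bigl|\EE_{n<N} c(n) T^{p(n)} f - \tfrac{N_k}{N}\EE_{n<N_k} c(n) T^{p(n)} f\Bigr| \le \|c\|_\infty \|f\|_\infty \cdot \frac{N-N_k}{N} \le C\|f\|_\infty(\rho-1),
\]
so letting $\rho \downarrow 1$ along a countable set closes the gap without any oscillation input from $c$. Density of $L^\infty$ in $L^r$ together with Bourgain's maximal inequality (resp.\ the classical maximal inequality for linear $p$) then upgrades to all of $L^r$, $r>1$ (resp.\ $L^1$). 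This is exactly what is packaged inside Proposition~\ref{prop:reduction:A}.
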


\subsection*{Notation}

We denote $\NN = \{1,2,\dots,\}$ and $\NN_0 = \NN \cup \{0\}$. The symbol $\EE$ is borrowed from probability theory, { $\EE_{x \in A} f(x) = \frac{1}{\abs{A}} \sum_{x \in A} f(x)$ for a finite set $A$}. We write $[N] = \{0,1,\dots,N-1\}$ and $e(\theta) = e^{2 \pi i \theta}$.

We use standard asymptotic notation: $X = O(Y)$ or $X \ll Y$ if there exists an absolute constant $c$ such that $\abs{X} < c Y$. If $X$ and $Y$ depend on a parameter $n$ then $X = o(Y)$ as $n \to \infty$ if $Y > 0$ for sufficiently large $n$ and $X/Y \to 0$ as $n \to \infty$.

\subsection*{Acknowledgements}

The authors wish to thank Jakub Byszewski and Emmanuel Lesigne for helpful discussions and comments, and the anonymous referees for their careful reading of this paper. 
The second author also expresses his gratitude to the University of Oxford and to the University of Science and Technology of China in Hefei, where parts of this paper were completed. The second author is supported by ERC grant ErgComNum 682150.

\makeatletter{}\section{Definitions}\label{sec:Definitions}

\subsection*{Automatic sequences}

A sequence $(a(n))_{n \geq 0}$ taking values in a finite set $\Delta$ is $k$-\textit{automatic} if $a(n)$ can be computed by a finite device, given the expansion of $n$ base $k$ on input. We now make this more precise. For the canonical introduction to the theory of automatic sequences, we refer to \cite{AlloucheShallit-book}.

Let $k \geq 2$ be an integer. We will denote by $\Sigma_k = \{0,1,\dots,k-1\}$ the set of digits base $k$, and by $\Sigma_k^* = \bigcup_{l \geq 0} \Sigma_k^l$ the set of words over $\Sigma_k$, including the empty word $\epsilon$. With the operation of concatenation, $\Sigma_k^*$ is a monoid. If $w = (w_i)_{i=0}^{l-1} \in \Sigma_k^*$, then by $[w]_k \in \NN_0$ we denote the corresponding integer $\sum_{i=0}^{l-1} w_i k^i$, and for $n \in \NN_0$ by $(n)_k \in \Sigma_k^*$ we denote the expansion of $n$ base $k$ with no leading $0$'s. (In particular, $(0)_k = \epsilon$.) Similarly, for $n \in \NN_0$ and $t \in \NN_0$ by $(n)_k^t \in \Sigma_k^t$ we denote the terminal $t$ digits of $n$ (padded with leading $0$'s if necessary).

A finite $k$-automaton with output $\cA$ (which we will subsequently just call automaton) consists of the following data:
\begin{enumerate}
\item a finite set of ``states'' $S$,
\item a distinguished ``initial'' state $s_0 \in S$,
\item a ``transition'' function $\delta \colon S \times \Sigma_k \to S$,
\item an ``output'' function $\tau \colon S \to \Delta$ (where $\Delta$ is some finite set).
\end{enumerate}

For instance, the Thue--Morse sequence, given by $t(n) = s_2(n) \bmod 2$ where $s_2(n)$ denotes the sum of binary digits of $n$, can be computed by the following automaton with $S = \{s_0,s_1\}$, $\delta(s_i,0) = s_i$, $\delta(s_i,1) = s_{1-i}$ and $\tau(s_i) = i$ for $i \in \{0,1\}$.
\begin{center}
\begin{tikzpicture}[shorten >=1pt,node distance=2cm, on grid, auto] 
   \node[state] (s_0)   {$s_{0}$}; 
   \node[state] (s_1) [right=of s_0] {$s_1$}; 
  \tikzstyle{loop}=[min distance=6mm,in=210,out=150,looseness=7]
  
    \path[->] 
    
    (s_0) edge [loop left] node {0} (s_0)
          edge [bend right] node [below]  {1} (s_1);
          
 \tikzstyle{loop}=[min distance=6mm,in=30,out=-30,looseness=7]
 \path[->]
    (s_1) edge [bend right] node [above]  {1} (s_0)
          edge [loop right] node  {0} (s_1);
\end{tikzpicture}
\end{center}

We will also occasionally be interested in  automata without output, or without a distinguished initial state; in this case we will refer to them as partial automata (it will always be clear from the context which data is present).

Given an automaton $\cA = (S,s_0,\delta,\tau)$ we may extend $\delta$ to $\Sigma_k^*$ by requiring that $\delta(s, uv) = \delta( \delta(s,v), u)$ for all $s \in S,\ u,v \in \Sigma_k^*$. The automaton induces a function $a \colon \Sigma_k^* \to \Delta$ given by $a(w) = \tau( \delta(s_0, w))$, and hence also a sequence (which we will denote by the same letter) $a\colon \NN_0 \to \Delta$, $a(n) = a((n)_k) = \tau( \delta(s_0, (n)_k))$. Sequences (resp. functions) $a(\cdot)$ which arise this way are said to be $k$-\emph{automatic}.

The class of $k$-automatic sequences is closed under arithmetic operations and restriction to arithmetic progressions for any $k \geq 2$. That is, if $a(n)$ and $b(n)$ are $k$-automatic sequences taking values in $\CC$,  then $a(n)+b(n),\ a(n) \cdot b(n)$ are $k$-automatic; and if $a(n)$ is any $k$-automatic sequence and $q \in \NN,\ r \in \NN_0$ then $a(qn+r)$ is $k$-automatic. 

A (partial) $k$-automaton $\cA = (S,\delta)$ without output and initial state is \emph{strongly connected} is there exists a path between any pair of vertices, i.e.\ for each $s, s' \in S$ there exists $v \in \Sigma_k^*$ with $\delta(s,v) = s'$. A strongly connected component of $\cA$ is a set $S' \subset S$ of states such that for any states $s,s' \in S$ there exists $v \in \Sigma_k^*$ such that $\delta(s,v) = s'$, i.e.\ $S'$ is strongly connected as a directed multigraph.

We will usually treat the base $k \geq 2$ as fixed, although occasionally it will be convenient to replace it by a power $k^t$. If a $k$-automatic sequence $a(n)$ is given, then this is essentially the only freedom we have in the choice of $k$.  Indeed, we have the following result.

\begin{theorem}[Cobham]
	Let $k,l \in \NN_{\geq 2}$, and let $a(n)$ be a $k$-automatic sequence. Then $a(n)$ is $l$-automatic if and only if either $a(n)$ is ultimately periodic or $\log l/ \log k \in \QQ$.
\end{theorem}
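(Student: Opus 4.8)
The statement splits into an elementary ``if'' direction and the substantial ``only if'' direction, which is Cobham's theorem proper; for the latter my plan is to sketch the standard structure of the proof but ultimately to cite it.

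\emph{The ``if'' direction.} Suppose first that $a$ is ultimately periodic, say $a(n+P)=a(n)$ for all $n\geq n_0$. For $l\geq 2$ and $i$ with $l^i\geq n_0$, the sequence $n\mapsto a(l^in+r)$ is, for $n\geq 1$, the restriction of $a$ to the arithmetic progression $r,\,r+l^i,\,r+2l^i,\dots$, all of whose terms are $\geq n_0$; hence it is purely periodic with period dividing $P$, and the whole sequence is obtained from one of the finitely many such periodic sequences by prepending the single value $a(r)$. Together with the finitely many sequences arising from small $i$, this shows that the $l$-kernel $\{\,n\mapsto a(l^in+r):i\geq 0,\ 0\leq r<l^i\,\}$ is finite, so $a$ is $l$-automatic by Eilenberg's criterion. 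Suppose instead that $\log l/\log k=p/q\in\QQ$ in lowest terms; then $l^q=k^p$, and comparing prime factorisations gives an integer $m\geq2$ with $k=m^q$ and $l=m^p$. Using the standard fact that a sequence is $k$-automatic if and only if it is $k^t$-automatic for every $t\geq1$ — the nontrivial implication being realised by an automaton that reads base-$k$ digits, processes them in blocks of length $t$, and pads the last partial block with leading zeros — and running the chain $k\leftrightarrow m^q\leftrightarrow m\leftrightarrow m^p\leftrightarrow l$, we conclude that $a$ is $l$-automatic.

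\emph{The ``only if'' direction (Cobham's theorem).} Assume $a$ is both $k$- and $l$-automatic with $\log l/\log k\notin\QQ$; we must show $a$ is ultimately periodic. For each value $\delta$ in the finite range of $a$, the characteristic sequence of $S_\delta=\{n:a(n)=\delta\}$ is a Boolean combination of $a$, hence again both $k$- and $l$-automatic, and $a$ is ultimately periodic if and only if every $S_\delta$ is; so it suffices to prove that a set $S\subseteq\NN_0$ that is simultaneously $k$- and $l$-recognisable, with $\log l/\log k$ irrational, is ultimately periodic. The number-theoretic engine is that $\log l/\log k\notin\QQ$ forces $\{\,k^il^{-j}:i,j\geq0\,\}$ to be dense in $(0,\infty)$: the subgroup $\ZZ\log k+\ZZ\log l$ of $\RR$ is dense, and one passes to non-negative exponents via equidistribution of $\{\,i\log k/\log l\,\}$ modulo $1$. \emph{Step 1:} excluding the trivial cases of $S$ finite or cofinite (both ultimately periodic), one shows that $S$ has bounded gaps. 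The input here is the self-similar structure of a $k$-recognisable set at the scales $k^n$: pumping loops in the automaton converts a single large gap into gaps of comparable relative size at infinitely many scales, and then the $l$-recognisability together with the density of $\{k^il^{-j}\}$ propagates this to all large scales, which is incompatible with $S$ being co-infinite unless the gaps were bounded to begin with. \emph{Step 2:} a syndetic set that is simultaneously $k$- and $l$-recognisable is ultimately periodic; this follows from a further application of the same machinery to translates of $S$ and to the sets $S\cap(S-h)$ (again $k$- and $l$-recognisable), forcing the two incommensurable self-similar structures carried by a bounded-gap sequence to collapse to genuine periodicity.

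\emph{Main obstacle.} Step 1 is the heart of the matter: it is the only place multiplicative independence is used in an essential way, through the density of $\{k^il^{-j}\}$, and it requires delicate bookkeeping with the regular-language structure of $S$ base $k$ and base $l$ simultaneously (iterated pumping-lemma arguments, controlling word lengths in both bases). I would not reproduce this argument but instead invoke Cobham's original proof, or the streamlined treatment in Allouche and Shallit's book \cite{AlloucheShallit-book}.
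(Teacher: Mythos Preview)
The paper does not prove this statement at all: Cobham's theorem is quoted in Section~\ref{sec:Definitions} purely as background, with no argument and no explicit reference beyond the ambient citation of \cite{AlloucheShallit-book}. There is therefore no ``paper's own proof'' to compare your proposal against.

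That said, your write-up is an accurate account of how one would justify the statement. The ``if'' direction is correct: your $l$-kernel argument for ultimately periodic sequences is fine (the bookkeeping with the first term $a(r)$ is handled correctly), and the reduction of the multiplicatively dependent case to the standard $k\leftrightarrow k^t$ equivalence via a common base $m$ is the usual one; the existence of such $m$ from $l^{q}=k^{p}$ with $\gcd(p,q)=1$ follows exactly from the prime-factorisation comparison you indicate. For the ``only if'' direction you correctly identify the substantive content as Cobham's theorem proper, outline the standard two-step strategy (bounded gaps via density of $\{k^il^{-j}\}$, then syndetic $\Rightarrow$ ultimately periodic), and defer to \cite{AlloucheShallit-book} for the details. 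This is the right call: the full proof is lengthy and orthogonal to the paper's concerns, and the paper itself does not attempt it.

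In short, your proposal goes beyond what the paper provides and is sound; for the purposes of this paper the appropriate ``proof'' is simply a citation.
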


Here, a sequence is \emph{ultimately periodic} if it agrees with a periodic sequence away from a finite set.

A slight technical difficulty stems from the fact that elements of $\Sigma_k^*$ may well have leading $0$'s. Luckily, whenever an automatic sequence $a \colon \NN_0 \to \Delta$ is given, it is always possible to find an automaton $\cA$ which produces $a(n)$ so that the corresponding sequence $a\colon \Sigma_k^* \to \Delta$ 
has the property that $a(w) = a([w]_k)$ for all $w \in \Sigma_k^*$, i.e.\ $\tau(\delta(s,0)) = \tau(s)$ for all $s$. In this case, we will say that $\cA$ \emph{ignores leading $0$'s}. We will assume that all our automatic sequences $a \colon \NN_0 \to \Delta$ are produced by automata which ignore leading $0$'s.

For any sequence $a \colon \Sigma_k^* \to \Delta$, we define the $k$-\emph{kernel} of $a$ to be the set $\cN_k(a)$ of sequences of the form $b(u) = a(uv)$ where $v \in \Sigma^*_k$. Accordingly, for any sequence $a \colon \NN_0 \to \Delta$, we define the $k$-kernel $\cN_k(a)$ of $a$ to be the set of sequences $b(n) = a(k^l n + m)$ where $m < k^l$. Note that these definitions are consistent with the way that we identify sequences $\Sigma_k^* \to \Delta$ and $\NN_0 \to \Delta$.

The relevance of $k$-kernels to the study of automatic sequences stems from the following well-known characterisation.

\begin{proposition}
	Let $a \colon \Sigma_k^* \to \Delta$. Then $a$ is $k$-automatic if and only if $\cN_k(a)$ is finite. The analogous statement holds for sequences $\NN_0 \to \Delta$.
\end{proposition}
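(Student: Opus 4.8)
The plan is to prove both implications by making the dictionary between an automaton and its kernel explicit; this is a classical fact (see \cite{AlloucheShallit-book}), and the argument is elementary.

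\textbf{From automata to finite kernels.} Suppose $a$ is produced by an automaton $\cA = (S, s_0, \delta, \tau)$. For each $v \in \Sigma_k^*$, the shifted sequence $u \mapsto a(uv)$ is produced by the automaton $\cA_v := (S, \delta(s_0,v), \delta, \tau)$ obtained from $\cA$ by moving the initial state to $\delta(s_0,v)$; indeed, for any $w$ we have $a(wv) = \tau(\delta(s_0, wv)) = \tau\bigl(\delta(\delta(s_0,v), w)\bigr)$ by the functional equation defining the extended transition map. Since $\cA_v$ depends on $v$ only through the state $\delta(s_0,v) \in S$, the set $\cN_k(a)$ has at most $\abs S$ elements, hence is finite.

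\textbf{From finite kernels to automata.} Conversely, assume $\cN := \cN_k(a)$ is finite, and build an automaton directly on it: take the state set to be $\cN$ (note that $a \in \cN$, corresponding to $v = \epsilon$), the initial state to be $a$ itself, the output map to be $\tau(b) := b(\epsilon)$, and the transition on a single digit $d \in \Sigma_k$ to be $\delta(b,d) := \bigl(u \mapsto b(ud)\bigr)$, which indeed lies in $\cN$ since $b(u) = a(uv)$ implies $b(ud) = a\bigl(u(dv)\bigr)$. A routine induction on the length of $w$, using the functional equation for the extended $\delta$ together with associativity of concatenation, then yields $\delta(a, w) = \bigl(u \mapsto a(uw)\bigr)$ for all $w \in \Sigma_k^*$; evaluating at the empty word and applying $\tau$ gives $\tau(\delta(a,w)) = a(w)$, so this automaton produces $a$ and $a$ is $k$-automatic.

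\textbf{The $\NN_0$-version and the main difficulty.} For a sequence $a \colon \NN_0 \to \Delta$ one passes to $\tilde a \colon \Sigma_k^* \to \Delta$, $\tilde a(w) := a([w]_k)$: since automata may be taken to ignore leading zeros, $a$ is $k$-automatic if and only if $\tilde a$ is, while the two conventions for the kernel agree ($\cN_k(a) = \cN_k(\tilde a)$ as sets of sequences, a word $v$ of length $l$ with $[v]_k = m$ corresponding to $n \mapsto a(k^l n + m)$), so finiteness of one kernel is equivalent to finiteness of the other. I do not expect a genuine obstacle anywhere in this argument; the only point demanding care — and the place where direction errors could creep in — is the bookkeeping for the reading direction of words (the convention $\delta(s,uv) = \delta(\delta(s,v),u)$) and for leading zeros, all of which is routine once one fixes conventions at the outset.
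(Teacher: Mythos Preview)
The paper does not supply a proof of this proposition: it is stated as a ``well-known characterisation'' and immediately followed by the discussion of co-kernels. Your argument is the standard one (essentially the proof in \cite{AlloucheShallit-book}), and it is correct, including the care you take with the reading convention $\delta(s,uv) = \delta(\delta(s,v),u)$ and with the identification between the two notions of kernel via $\tilde a(w) = a([w]_k)$.
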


We will also use the complementary notion of ``co-kernel''. For $a \colon \Sigma_k^* \to \Delta$, the co-kernel $\cN'_k(a)$ consists of the sequences $b(u) = a(vu)$ where $v \in \Sigma^*_k$. This notion does not have a satisfactory analogue for sequences $\NN_0 \to \Delta$.

Let $R \colon \Sigma_k^* \to \Sigma_k^*$ denote the reflection, i.e.\ $R(w_{l-1}\dots w_{1} w_0) = w_0w_1 \dots w_{l-1}$. It is well known that $a \colon \Sigma_k^* \to \Delta$ is $k$-automatic if and only if $a \circ R$ is automatic. Since $\cN'_k(a)$ consists of sequences of the form $b \circ R$ where $b \in \cN_k(a \circ R)$, we conclude that $a \colon \Sigma_k^* \to \Delta$ is automatic if and only if $\cN'_k(a)$ is finite.

Suppose that the $k$-automatic sequence $a\colon \Sigma_k^* \to \Omega$ is produced by the automaton $\cA = (S,s_0,\delta,\tau)$. Then, the sequences in $\cN_k(a)$ are obtained by changing the initial state, and the sequences in $\cN_k'(a)$ are obtained by changing the output function. More precisely, if $b \in \cN_k(a)$ and $b' \in \cN_k'(a)$ are given by $b(u) = a(uv)$ and $b'(u) = a(vu)$, then $b$ is produced by the automaton $(S, \delta(s_0,v), \delta, \tau)$, and $b'$ is produced by $(S,s_0,\delta,\tau')$ where $\tau'(s) = \tau( \delta(s,v))$. 

In particular, given a ``partial automaton'' consisting of a set of states $S$ and a transition function $\delta$, as well as a (not necessarily finite) target set $\Omega$, if we let $\cM$ denote the family of sequences $a \colon \Sigma_k^* \to \Omega$ produced by all possible automata $(S,s_0,\delta,\tau)$ where $s_0 \in S$ and $\tau \colon S \to \Omega$, then $\cM$ is closed under the operation of taking kernels and co-kernels.

{
\subsection*{Measure-preserving systems}
By a measure-preserving system we mean a triple $(X,\mu,T)$, where $(X,\mu)$ is a probability space and $T:X\to X$ is a $\mu$-preserving transformation. For every $p\geq 1$ one calls the corresponding map $T\colon L^p(\mu)\to L^p(\mu)$ defined by $(Tf)(x):=f(Tx)$ the \emph{Koopman operator}; the Koopman operator is a linear isometry.

A measure-preserving system $(X,\mu,T)$ is called \emph{ergodic} if for measurable sets $T^{-1}(A)\subset A$ implies $\mu(A)\in\{0,1\}$. For the Koopman operator $T$ this means that the space of $T$-invariant functions $\Fix(T)$ consists of constant functions only, i.e., $\dim \Fix( T) =1$. Moreover, $(X,\mu,T)$ is called \emph{totally ergodic} if $T^n$ is ergodic for every $n\in\NN$. The equivalent spectral condition is that the Koopman operator $T$ does not have any rational eigenvalue on the unit circle other than $1$ and $\dim \Fix( T)=1$. 

For the basic theory of measure-preserving transformations we refer to any book on ergodic theory, e.g., to Walters \cite{W82}, Petersen \cite{P89} or \cite{EFHN15}.
}

\makeatletter{}\section{Outline}\label{sec:outline}

In this section, we outline the main argument, and explain how the proofs of Theorems \ref{thm:A}, \ref{thm:B} and \ref{thm:D} can be reduced to Fourier analysis.  We also discuss some examples, showing where our methods are (or are not) applicable.

\renewcommand{\SS}{\mathbb{S}}
 When it comes to $L^2$-convergence, the reduction is rather straightforward. Indeed, if $(X,\mu,T)$ is a measure-preserving system and $f \in L^2(\mu)$, then by the Spectral Theorem the space spanned by $T^n f$ for $n \in \NN_0$ can be identified with a subspace of $L^2(\SS,\nu_f)$ for some measure $\nu_f$ on the complex unit circle $\SS = \set{z \in \CC}{\abs{z}=1}$ through a map induced by $T^n f \mapsto z^n$.

\begin{proposition}\label{prop:reduction:B}
Let $p\colon \NN_0 \to \NN_0$ and $a \colon \NN_0 \to \CC$ be any sequences.
Suppose that $a(n)$ is bounded and that for each $\alpha \in \RR \setminus \QQ$ we have
	\begin{equation}
		\lim_{N \to \infty} \EE_{n < N} a(n) e(\alpha{  p(n)}) = 0.
		\label{eq:280}
	\end{equation}
	Then for any totally ergodic measure-preserving system $(X,\mu, T)$, and $f \in L^2(\mu)$ { with $\int_X f d\mu = 0$} we have
	\begin{equation}
		\EE_{n < N} a(n) T^{{ p(n)}} f \to 0 \text{ in } L^2.
		\label{eq:281}
	\end{equation}
\end{proposition}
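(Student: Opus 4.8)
The plan is to use the spectral theorem to transfer the problem on $L^2(\mu)$ to a problem about $L^2$ of a spectral measure on the circle, and then apply a routine $L^2$-estimate together with the hypothesis \eqref{eq:280} and a dominated-convergence argument.

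\textbf{Step 1: Spectral reduction.} Fix $f \in L^2(\mu)$ with $\int_X f\,d\mu = 0$. By the spectral theorem, there is a finite positive Borel measure $\nu = \nu_f$ on $\SS$ and an isometric identification of the closed $T$-invariant subspace $H_f = \overline{\operatorname{span}}\{T^n f : n \in \NN_0\}$ with a subspace of $L^2(\SS,\nu)$ under which $T^{p(n)} f$ corresponds to the function $z \mapsto z^{p(n)}$. Since $\int_X f\, d\mu = 0$ and $(X,\mu,T)$ is totally ergodic, the operator $T$ has no rational point eigenvalues on $\SS$ other than $1$, and $f$ is orthogonal to the constants, so $\nu$ gives no mass to any root of unity; in particular $\nu(\{z : z \text{ is a root of unity}\}) = 0$. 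Under this identification,
\[
	\norm{\EE_{n<N} a(n) T^{p(n)} f}_{L^2(\mu)}^2 = \int_{\SS} \abs{\EE_{n<N} a(n) z^{p(n)}}^2 \, d\nu(z).
\]
Writing $z = e(\alpha)$ for $\alpha \in \RR/\ZZ$, it therefore suffices to show that $\int_{\RR/\ZZ} \abs{F_N(\alpha)}^2\, d\tilde\nu(\alpha) \to 0$, where $F_N(\alpha) := \EE_{n<N} a(n) e(\alpha p(n))$ and $\tilde\nu$ is the pushforward of $\nu$ to $\RR/\ZZ$, a finite measure assigning no mass to rationals.

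\textbf{Step 2: Pointwise convergence of the integrand, plus domination.} Since $a$ is bounded, say $\abs{a(n)} \le C$ for all $n$, we have $\abs{F_N(\alpha)} \le C$ for all $N$ and all $\alpha$, so the integrand is uniformly bounded by $C^2$, which is $\tilde\nu$-integrable as $\tilde\nu$ is finite. For $\tilde\nu$-a.e. $\alpha$ the value $\alpha$ is irrational, and for every irrational $\alpha$ the hypothesis \eqref{eq:280} gives $F_N(\alpha) \to 0$ as $N \to \infty$, hence $\abs{F_N(\alpha)}^2 \to 0$. By the dominated convergence theorem, $\int \abs{F_N(\alpha)}^2\, d\tilde\nu(\alpha) \to 0$, which is exactly \eqref{eq:281}.

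\textbf{Main obstacle.} The only genuinely delicate point is justifying that the spectral measure $\nu_f$ gives zero mass to the set of roots of unity — this is where total ergodicity and the hypothesis $\int_X f\,d\mu = 0$ are both used. Total ergodicity rules out all rational eigenvalues except $1$, so the only possible atom of $\nu_f$ among roots of unity would be at $z=1$, corresponding to the projection of $f$ onto $\Fix(T) = $ constants; the assumption $\int_X f\,d\mu = 0$ kills precisely that component. More carefully, one should note that $\nu_f(\{\lambda\}) > 0$ for a root of unity $\lambda \neq 1$ would force $\lambda$ to be an eigenvalue of $T$ (the corresponding eigenfunction being the image of $z \mapsto \mathbf{1}_{\{\lambda\}}(z)$), contradicting total ergodicity; and $\nu_f(\{1\}) = \norm{\int_X f\,d\mu}^2 = 0$. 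Once this is in place, everything else is a standard application of the spectral theorem and dominated convergence. (One could alternatively phrase Step 1 via the cyclic-subspace decomposition, but a single spectral measure for the cyclic space generated by $f$ suffices here.)
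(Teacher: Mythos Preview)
Your proof is correct and follows exactly the approach the paper has in mind: the paper's own proof consists of the single sentence ``This is a standard application of the Spectral Theorem,'' and what you have written is precisely that standard application spelled out in full, including the key observation (implicit in the preceding paragraph of the paper) that total ergodicity together with $\int_X f\,d\mu=0$ forces $\nu_f$ to assign no mass to roots of unity.
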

\begin{proof}
	This is a standard application of the Spectral Theorem.
\end{proof}

 Hence, Theorem \ref{thm:A} will follow as soon as we can prove that \eqref{eq:280} holds for any balanced automatic sequence $a \colon \NN_0 \to \CC$ and polynomial sequence $p \colon \NN_0 \to \NN_0$. This is carried out in Section \ref{sec:Mean-Convergence}.

For pointwise convergence, more precise estimates are needed. Note also that for this part we restrict our attention to the case when $p$ is a polynomial.

\begin{proposition}\label{prop:reduction:A}
	{ Let $p \in \ZZ[x]$ be a polynomial with $p(\NN_0) \subset \NN_0$.} Suppose that $a(n)$ is a bounded sequence in $\CC$ with the property that 
	\begin{equation}
		\sup_{ \alpha \in \RR} \abs{ \EE_{n < N} a(n) e(\alpha { p(n)})} = O( 1/\log^2 N).\label{eq:860}		
	\end{equation}
	Then, for every measure-preserving system $(X,\mu, T)$ and every $f \in L^r(\mu)$, $r>1$, there exists a set $X' \subset X$ with $\mu(X') = 1$, such that
	\begin{equation}
		\EE_{n < N} a(n) T^{{ p(n)}} f(x) \to 0
    \label{eq:conv-pol}
	\end{equation}
	for any $x \in X'$. If $p$ is linear, then (\ref{eq:conv-pol}) holds for every $f \in L^1(\mu)$.
\end{proposition}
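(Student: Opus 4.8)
The plan is to reduce the statement to the case $f \in L^2(\mu)$ and then prove almost-everywhere convergence for $L^2$-functions directly, combining the Spectral Theorem with the exponential-sum bound \eqref{eq:860} and a Rademacher--Menshov chaining argument to pass from the dyadic subsequence $N = 2^k$ to arbitrary $N$.

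First I would reduce to $f \in L^2$. Since $a$ is bounded, the maximal operator $A^\ast f := \sup_N \bigl\lvert \EE_{n<N} a(n) T^{p(n)} f \bigr\rvert$ is pointwise dominated by a constant multiple of $B^\ast f := \sup_N \EE_{n<N} \abs{f \circ T^{p(n)}}$, the maximal operator attached to Bourgain's polynomial averages. By \cite{Bourgain89}, $B^\ast$ is bounded on $L^r(\mu)$ for every $r > 1$; and when $p$ is linear it is of weak type $(1,1)$ by the maximal ergodic theorem (one passes to the natural extension if $T$ is not invertible, which is harmless since $p(\NN_0) \subset \NN_0$). A routine application of the Banach principle then shows that the set of $f \in L^r(\mu)$ (resp.\ $f \in L^1(\mu)$ when $p$ is linear) for which \eqref{eq:conv-pol} holds is closed; since it contains the dense subspace $L^\infty(\mu) \subset L^2(\mu)$, it suffices to prove \eqref{eq:conv-pol} for $f \in L^2(\mu)$.

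Now fix $f \in L^2(\mu)$. By the Spectral Theorem there is a finite positive measure $\nu_f$ on $\TT$ with $\nu_f(\TT) = \norm{f}_2^2$ and $\langle T^i f, T^j f\rangle = \int_\TT e\bigl((i-j)\theta\bigr)\,d\nu_f(\theta)$; expanding the square and integrating gives, for every finite interval $I \subset \NN_0$,
\[
	\Bigl\lVert \sum_{n \in I} a(n) T^{p(n)} f \Bigr\rVert_2 \;\le\; \Bigl( \sup_{\theta \in \RR} \Bigl\lvert \sum_{n \in I} a(n) e(p(n)\theta) \Bigr\rvert \Bigr)\, \norm{f}_2 .
\]
Applied to $I = \{0,\dots,2^k-1\}$, the hypothesis \eqref{eq:860} yields $\bigl\lVert \EE_{n<2^k} a(n) T^{p(n)} f \bigr\rVert_2 \ll \norm{f}_2 / k^2$, which is square-summable in $k$; hence $\EE_{n<2^k} a(n) T^{p(n)} f \to 0$ a.e.\ along the dyadic subsequence. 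For general $N \in [2^k, 2^{k+1})$ I would split
\[
	\EE_{n<N} a(n) T^{p(n)} f \;=\; \frac{2^k}{N}\, \EE_{n<2^k} a(n) T^{p(n)} f \;+\; \frac 1N \sum_{2^k \le n < N} a(n) T^{p(n)} f ,
\]
so that it remains to show $R_k := 2^{-k} \sup_{2^k \le N < 2^{k+1}} \bigl\lvert \sum_{2^k \le n < N} a(n) T^{p(n)} f \bigr\rvert \to 0$ a.e., for which $\sum_k \norm{R_k}_2^2 < \infty$ is enough. Writing $\sum_{n \in I} = \sum_{n < M'} - \sum_{n < M}$ for $I = [M,M') \subset [2^k, 2^{k+1})$ and using \eqref{eq:860} yields $\sup_\theta \bigl\lvert \sum_{n\in I} a(n) e(p(n)\theta) \bigr\rvert \ll \min\bigl( \abs{I},\, 2^k/k^2 \bigr)$, the first alternative being the trivial bound. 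Decomposing $[2^k, 2^{k+1})$ into its dyadic subintervals --- for $0 \le j \le k$, the partition $\mathcal D_j$ into $2^j$ consecutive blocks of length $2^{k-j}$ --- and noting that each $[2^k,N)$ is a disjoint union of at most one block per level, the Rademacher--Menshov inequality together with the displayed $L^2$-bound gives
\[
	\Bigl\lVert \sup_{2^k \le N < 2^{k+1}} \Bigl\lvert \sum_{2^k \le n < N} a(n) T^{p(n)} f \Bigr\rvert \Bigr\rVert_2 \;\le\; \sum_{j=0}^{k} \Bigl( \sum_{I \in \mathcal D_j} \bigl\lVert \textstyle\sum_{n\in I} a(n) T^{p(n)} f \bigr\rVert_2^2 \Bigr)^{1/2} \;\ll\; \norm{f}_2 \sum_{j=0}^{k} 2^{j/2} \min\bigl(2^{k-j},\, 2^k/k^2\bigr) \;\ll\; \frac{2^k}{k}\, \norm{f}_2 ,
\]
the last step by splitting the sum at $2^j \approx k^2$. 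Thus $\norm{R_k}_2 \ll \norm{f}_2 / k$, so $\sum_k \norm{R_k}_2^2 < \infty$, and combining this with the dyadic convergence proves \eqref{eq:conv-pol} for $f \in L^2(\mu)$.

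The main obstacle is precisely this passage from the dyadic subsequence to all $N$: the Rademacher--Menshov chaining loses a factor comparable to $\log N \asymp k$, and it is the $1/\log^2 N$ decay in \eqref{eq:860} --- any fixed power strictly larger than $1$ would in fact suffice --- that makes this loss affordable, once the exponential-sum bound for short subintervals is packaged through the $\min$ above. The remaining ingredients (the spectral computation, domination by Bourgain's maximal operator, and the Banach-principle reduction) are routine.
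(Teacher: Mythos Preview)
Your argument is correct. The paper itself does not prove this proposition: it simply cites Corollary~2 of \cite{Fan-2017+} and points to an adaptation of Proposition~3.1 in \cite{AHKLR}. What you have written is in effect a self-contained version of that standard argument: reduce to $L^2$ via the appropriate maximal inequality (Bourgain's polynomial maximal inequality for $r>1$, the classical one for linear $p$), then use the spectral identity
\[
\Bigl\lVert \sum_{n\in I} a(n) T^{p(n)} f \Bigr\rVert_2^2 \;=\; \int_{\TT} \Bigl\lvert \sum_{n\in I} a(n)\, e(p(n)\theta)\Bigr\rvert^2 \, d\nu_f(\theta)
\]
to transfer the uniform exponential-sum bound into an $L^2$ bound, obtain a.e.\ convergence along the dyadic subsequence by square-summability, and upgrade to all $N$ via Rademacher--Menshov chaining. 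The computation $\sum_{j\le k} 2^{j/2}\min(2^{k-j},\,2^k/k^2)\ll 2^k/k$, with the split at $2^j\asymp k^2$, is exactly right, and your parenthetical observation that any decay rate $1/\log^{1+\e} N$ would suffice is also correct. So there is no gap; you have supplied the details the paper chose to outsource.
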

\begin{proof}
For a proof, see Corollary 2 in \cite{Fan-2017+}. This also follows by {an adaptation of }
the proof of Proposition 3.1 in \cite{AHKLR}.
\end{proof}	
\noindent Note that the restriction $r>1$ in Proposition \ref{prop:reduction:A} is due to the fact that the pointwise ergodic theorem (and hence also the maximal inequality which is crucial for the proof) for nonlinear polynomials fails in $L^1$ in general, see Buczolich, Mauldin \cite{BuczolichMauldin} and La Victoire \cite{LaVictoire}.

\smallskip

As before, it follows that in order to prove Theorems \ref{thm:B} and \ref{thm:D}, it will suffice to verify that condition \eqref{eq:860} holds for the sequence $a$ and the appropriate class of polynomial sequences. This is carried out in Sections \ref{sec:Pointwise-convergence} and \ref{sec:Invertible}, respectively.

\mbox{}

As an example, we consider the Thue--Morse sequence, given by $t(n) = (-1)^{s_2(n)}$, where $s_2(n)$ denotes the sum of digits of $n$ base $2$. We use $\log$ to denote logarithm base $2$. The following lemma with a superior value of $c = 1 - \log 3/ \log 4$ can be found in \cite{Gelfond-1968}, but we present the following argument as a source of motivation.

\begin{lemma}\label{lem:cancellation-ThueMorse}
	Let $t(n)$ be the Thue--Morse sequence. There exists a constant $c > 0$ such that 
	\begin{equation}
		\sup_{\alpha \in \RR} \abs{ \EE_{n < N} t(n) e(n\a)} \ll N^{-c}.
	\end{equation}
\end{lemma}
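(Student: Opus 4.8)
The plan is to exploit the self-similar structure of the Thue--Morse sequence under the substitution $n \mapsto 2n, 2n+1$, which gives the identity $t(2n) = t(n)$ and $t(2n+1) = -t(n)$. Writing $S_N(\alpha) = \sum_{n < N} t(n) e(n\alpha)$, I would first observe that splitting the sum over $n < 2N$ according to the parity of $n$ yields
\begin{equation}
	\sum_{n < 2N} t(n) e(n\alpha) = \sum_{m < N} t(m) e(2m\alpha) - \sum_{m < N} t(m) e((2m+1)\alpha) = (1 - e(\alpha)) \, S_N(2\alpha).
\end{equation}
Iterating this relation $\ell$ times gives $S_{2^\ell}(\alpha) = \prod_{j=0}^{\ell-1}(1 - e(2^j \alpha))$, and for general $N$ one can bound $S_N(\alpha)$ by writing $N$ in base $2$ and telescoping, so that $\abs{S_N(\alpha)} \le \sum_{i} \prod_{j} \abs{1 - e(2^j\alpha)}$ over a controlled collection of partial products, each of length roughly $\log N$ minus a bounded number of ``bad'' factors.

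The heart of the matter is then to show that a product of the form $\prod_{j=0}^{\ell-1} \abs{1 - e(2^j \alpha)}$ is at most $2^{\ell(1-c)}$ for a suitable absolute $c > 0$, uniformly in $\alpha$. Since $\abs{1 - e(\theta)} = 2\abs{\sin \pi \theta} \le 2$, the point is to gain a definite factor below $2$ a positive proportion of the time. Here I would use that one cannot have $\abs{\sin \pi \cdot 2^j \alpha}$ close to $1$ (equivalently $2^j \alpha$ close to $1/2 \bmod 1$) for two consecutive values of $j$: if $2^j\alpha \approx 1/2 \bmod 1$ then $2^{j+1}\alpha \approx 0 \bmod 1$, so $\abs{1 - e(2^{j+1}\alpha)}$ is small. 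Grouping the factors in consecutive pairs, each pair contributes at most $\max_{\theta}\, 2\abs{\sin\pi\theta} \cdot 2\abs{\sin 2\pi\theta} = \max_\theta\, 8 \abs{\sin\pi\theta}\abs{\sin\pi\theta}\abs{\cos\pi\theta}$, whose maximum over $\theta$ is strictly less than $4$; call it $4 \cdot 4^{-c'}$ with $c' > 0$. This yields $\prod_{j<\ell} \abs{1 - e(2^j\alpha)} \ll 4^{(\ell/2)(1-c')} = 2^{\ell(1-c')}$, hence (absorbing the bounded number of omitted factors and the outer sum over $i$, which costs only a polynomial-in-$\ell$, i.e.\ logarithmic-in-$N$, factor) $\abs{\EE_{n<N} t(n) e(n\alpha)} = \abs{S_N(\alpha)}/N \ll N^{-c}$ for $c$ slightly smaller than $c'/2$.

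I expect the main obstacle to be the bookkeeping for general $N$ (as opposed to $N = 2^\ell$): one must carefully track how the digit expansion of $N$ produces a sum of partial products rather than a single one, and verify that the number of terms and the number of ``lost'' factors in each term are both $O(\log N)$, so that the clean geometric bound for the full-length products survives with only a logarithmic loss, which is harmless after decreasing $c$. The trigonometric optimization $\max_\theta\, 8\sin^2(\pi\theta)\abs{\cos\pi\theta} < 4$ is a routine single-variable calculus exercise and is not the difficulty; the difficulty is organizing the telescoping cleanly. (As noted in the statement, the sharp constant $c = 1 - \log 3/\log 4$ requires a more careful grouping — considering blocks and the true $L^1$-type average of $\abs{1 + e(2^j\alpha)}$ over the relevant scales — but for the qualitative bound with some $c > 0$ the pairing argument suffices.)
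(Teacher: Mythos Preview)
Your proposal is correct and is essentially the same argument as the paper's: both exploit the multiplicative recursion $A(L,\alpha)=A(L-2,4\alpha)A(2,\alpha)$ (equivalently your pairing of consecutive factors $|1-e(2^j\alpha)||1-e(2^{j+1}\alpha)|$), compute the same maximum $\max_\theta 8\sin^2(\pi\theta)|\cos(\pi\theta)|=16/(3\sqrt3)<4$, and then pass to general $N$ by the binary decomposition of $[N]$ into dyadic blocks. The only point where the paper is a bit tidier is the general-$N$ step: since the block lengths $l_1>l_2>\dots$ are strictly decreasing, the sum $\sum_j 2^{l_j(1-c)}/N$ is a genuine geometric series and no logarithmic loss occurs, so you need not shrink $c$ at the end.
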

\begin{proof}
	For $L \in \NN$ denote $A(L,\alpha) = \abs{ \EE_{n < 2^L} t(n) e(n\a) }$, and note that the identity $A(L, \alpha) = A(L-l, 2^l \alpha) A(l, \alpha)$ follows immediately from the relation $t( 2^l n + m) = t(n) t(m)$ for $n,m,l$ with $0 \leq n$, $0 \leq m < 2^l$.
	An elementary computation shows that 
	
	$$ A(2,\alpha) = \abs{\frac{(1-e(\a))(1-e(2\a))}{4}} = \abs{ \sin\bra{ \pi \alpha} \sin\bra{ 2 \pi \alpha}} \leq \frac{4}{3 \sqrt{3}}. $$
	 It follows by induction that $A(L,\a)  \leq \frac{4}{3 \sqrt{3}} A(L-2,\alpha) \ll 2^{ - c L}$ where $c = \frac{1}{2} \log \frac{3 \sqrt{3}}{4}$. 
	 
	 Now, for arbitrary $N$, note that $[N]$ can be decomposed into a disjoint union of intervals $I_j$ taking the form $I_j = [m_j 2^{l_j}, (m_j+1)2^{l_j})$ where $m_i,l_i \in \NN_0$ and $l_1 > l_2 > l_3 > \dots$. Such decomposition can be constructed greedily, taking in each step the largest possible value of $l_i$. 
Then, 

	 \begin{align*}
	 \abs{ \EE_{n < N} t(n) e(n\a ) }
	  &\leq \sum_j \frac{2^{l_j}}{N} \abs{ \EE_{n \in I_j} t(n) e(n \a) }
	  \\& = \sum_j \frac{2^{l_j}}{N} A(l_j, \a) 
	  \ll  \sum_j \frac{2^{l_j(1-c)}}{N} \ll N^{-c}. \qedhere
	  \end{align*}
\end{proof}

As a consequence, the conclusion of Theorem \ref{thm:D} (hence also \ref{thm:A} and \ref{thm:B}) holds for the Thue--Morse sequence. (The Thue--Morse sequence is invertible and totally balanced, see Sec.\ \ref{sec:Invertible}.)

\mbox{}

It is natural to ask about the degree to which our results can be extended. Theorem \ref{thm:A} deals with arbitrary automatic sequences. The assumption that $\int_X f d\mu = 0$ cannot be relaxed, since automatic sequences need not be Ces\`aro convergent. (Note, however, that for Ces\`aro convergent sequences this condition is irrelevant since we can replace $f$ with $f - \int_X f d\mu$.) For similar reasons, the assumption of total ergodicity cannot be dropped (pick $f$ with $T^q f = f$ and pick $p$ such that $q| p(n)$ for all $n$).

Theorem \ref{thm:B} cannot be extended to all automatic sequences for similar reasons, as shown in the following proposition. Here, $\la f,g \ra$ is shorthand for $\int_X f(x) \bar g(x) d\mu(x)$.

\begin{proposition}
Let $a \colon \NN_0 \to \{0,1\}$ be the sequence given by $a(0) = 0$ and 

$$a(n) = \floor{ \log_2 n} \bmod 2$$
for $n \geq 1$, so that $a(n) = 1$ if the length of $n$ base $2$ is odd, and $a(n) = 0$ otherwise.  Then $a$ is $2$-automatic and Ces\`aro divergent. Moreover, for every measure-preserving system $(X,\mu,T)$ and every $f\in L^1(\mu)$, the following assertions are equivalent:
\begin{itemize}
\item[(i)] $\la f,g \ra=0$ for every $g\in \Fix(T)\cap L^\infty(\mu)$;
\item[(ii)] the weighted ergodic averages 
\begin{equation}\label{eq:weighted-ave}
\EE_{n < N} a(n) f(T^nx)
\end{equation}
converge to $0$ almost everywhere.
\end{itemize}
\end{proposition}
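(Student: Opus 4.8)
The plan is to prove the two implications separately, with the bulk of the work going into showing $(i) \Rightarrow (ii)$, since the reverse implication is essentially a soft observation.

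\textbf{Reduction and basic properties.} First I would record that $a$ is indeed $2$-automatic: the quantity $\floor{\log_2 n}$ is just (length of $(n)_2$) $- 1$, and one checks that a small automaton tracking the parity of the current word length (with appropriate handling of leading zeros, which forces $a$ to be computed by an automaton \emph{not} ignoring leading $0$'s in the naive sense — but one can still realise the sequence by a standard automaton since it is determined by the $2$-kernel, which is finite here). For Ces\`aro divergence, I would compute $\EE_{n < 2^L} a(n)$ explicitly: the block $[2^{j}, 2^{j+1})$ has $2^j$ integers of length $j+1$, so $\sum_{n < 2^L} a(n) = \sum_{j \text{ odd}, j < L} 2^j$, and the partial averages oscillate (roughly between $1/3$ and $2/3$ of the scale) without converging. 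Next, by the maximal inequality for bounded weights it suffices to prove $(ii)$ for $f \in L^2(\mu)$, and by the mean ergodic theorem I may decompose $f = g + h$ where $g \in \Fix(T)$ and $h \in \overline{(I-T)L^2}$.

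\textbf{The implication $(i) \Rightarrow (ii)$.} Assume $\la f, g\ra = 0$ for all bounded $T$-invariant $g$; since $g$ from the decomposition above is itself $T$-invariant, this forces $g = 0$, i.e. $f \in \overline{(I - T)L^2}$. For such $f$ I would prove a.e.\ convergence to $0$ of $\EE_{n<N} a(n) f(T^n x)$. The key point is that although $a$ is not Ces\`aro convergent, it \emph{is} convergent along the subsequence of powers $N = 2^L$ in a suitable averaged sense — more precisely, $a(n) = \floor{\log_2 n} \bmod 2$ is constant on each dyadic block $[2^j, 2^{j+1})$, equal to $j \bmod 2$. So on the block $[2^j, 2^{j+1})$ the weighted sum is $(j \bmod 2) \sum_{n \in [2^j,2^{j+1})} f(T^n x)$, and I can control $\frac{1}{2^j}\sum_{n \in [2^j, 2^{j+1})} f(T^n x) = \frac{1}{2^j}\bigl( S_{2^{j+1}}f(x) - S_{2^j} f(x)\bigr)$, where $S_N f = \sum_{n<N} f \circ T^n$, using Birkhoff's theorem: since $\int f = 0$, $S_N f(x)/N \to 0$ a.e., hence $\frac{1}{2^j}\sum_{n\in[2^j,2^{j+1})} f(T^n x) \to 0$ a.e.\ as $j \to \infty$. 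Summing the blocks with the geometric weights $2^j/N$ and using a maximal function / Abel summation argument to pass from $N = 2^L$ to general $N$ (exactly as in the greedy block decomposition in the proof of Lemma \ref{lem:cancellation-ThueMorse}), I get $\EE_{n<N} a(n) f(T^n x) \to 0$ a.e. One must be a little careful: the weights $2^j/N$ do not tend to zero (the last block can carry half the mass), so the "convergence to $0$ of each block average" must be supplemented by control of the tail, i.e.\ by the fact that $\sup_j \frac{1}{2^j}|S_{2^{j+1}} f(x) - S_{2^j}f(x)|$ is finite a.e.\ and the block averages decay; a standard $\e$-splitting (finitely many initial blocks contribute $o(1)$ after dividing by $N$, the remaining blocks each have average $< \e$) closes this.

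\textbf{The implication $(ii) \Rightarrow (i)$.} Conversely, suppose $(ii)$ holds and let $g \in \Fix(T) \cap L^\infty$. Then $f(T^n x)$... wait, rather: test $(ii)$ against $g$ — I would instead observe that if $g \in \Fix(T)$ then $g(T^n x) = g(x)$ for a.e.\ $x$ and all $n$, so $\EE_{n<N} a(n) g(T^n x) = \bigl(\EE_{n<N} a(n)\bigr) g(x)$, which does \emph{not} converge (when $g \ne 0$ on a positive measure set) because $\EE_{n<N} a(n)$ diverges; so by $(ii)$ applied to $g$ we must have $g = 0$ a.e. To get the orthogonality statement for general $f$: decompose $f = g + h$ with $g \in \Fix(T)$, $h \in \overline{(I-T)L^2}$ (for $f\in L^1$ one works with $L^2$ approximations plus the maximal inequality). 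By the already-established direction, $\EE_{n<N} a(n) h(T^n x) \to 0$ a.e.; so $(ii)$ for $f$ forces $\EE_{n<N}a(n) g(T^n x) = (\EE_{n<N}a(n)) g(x) \to 0$ a.e., and divergence of $\EE_{n<N}a(n)$ again forces $g = 0$ a.e., i.e.\ $\la f, \bar w \ra = 0$ for every $w \in \Fix(T) \cap L^\infty$ (taking $w$ = indicator-type test functions, or directly $g=0$ gives $\la f, g'\ra = \la g, g'\ra = 0$ for all invariant $g'$). This yields $(i)$.

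\textbf{Main obstacle.} I expect the genuine difficulty to be the a.e.\ convergence in $(i)\Rightarrow(ii)$ when passing from the subsequence $N = 2^L$ to arbitrary $N$, because the dyadic block weights do not vanish and the "last" partial block has the form $[2^{l}, 2^l + m)$ with $m < 2^l$ arbitrary; handling this cleanly requires the greedy decomposition into dyadic intervals together with the a.e.\ finiteness of the relevant maximal function, i.e.\ a Birkhoff-type maximal inequality argument rather than a bare pointwise limit. Everything else (the automaticity, the Ces\`aro divergence computation, the converse direction) is routine.
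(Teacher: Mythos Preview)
Your proposal is correct, but your route for $(i)\Rightarrow(ii)$ differs from the paper's in an instructive way. You reduce to $f$ with vanishing Birkhoff limit and then exploit that $a$ is constant on each dyadic block $[2^j,2^{j+1})$, controlling the block averages $\frac{1}{2^j}(S_{2^{j+1}}f-S_{2^j}f)\to 0$ a.e.\ via Birkhoff and summing with an $\varepsilon$-splitting; the tail block is handled by the same Birkhoff convergence, so the ``main obstacle'' you flag is not really an obstacle. The paper instead reduces (via the von Neumann decomposition and the maximal inequality) to the dense class of coboundaries $f=h-Th$ with $h\in L^\infty$, and then telescopes: since $a$ changes value only at powers of $2$, one has $\bigl\|\EE_{n<N}a(n)(T^nh-T^{n+1}h)\bigr\|_\infty\leq \EE_{n<N}|a(n)-a(n-1)|+2/N\ll (\log N)/N$. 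This is shorter, gives a quantitative $L^\infty$ rate, and sidesteps the partial-block issue entirely; your argument, on the other hand, uses only that $a$ is constant on geometric blocks and would apply verbatim to any such weight. For $(ii)\Rightarrow(i)$ the paper simply pairs the averages against a fixed $g\in\Fix(T)\cap L^\infty$ to obtain $\bigl(\EE_{n<N}a(n)\bigr)\langle f,g\rangle$, which must converge and hence forces $\langle f,g\rangle=0$ by Ces\`aro divergence; your version (decompose $f$, apply the forward direction to the coboundary part, deduce $g=0$) reaches the same conclusion but is a detour.
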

\begin{proof}
It is straightforward to construct a $2$-automaton which produces $a(n)$; it is enough to use two states as outlined below (the initial state is $s_0$ and the output at $s_i$ is $i$). 
\begin{center}
\begin{tikzpicture}[shorten >=1pt,node distance=2cm, on grid, auto] 
   \node[state] (s_0)   {$s_{0}$}; 
   \node[state] (s_1) [right=of s_0] {$s_1$}; 
  \tikzstyle{loop}=[min distance=6mm,in=210,out=150,looseness=7]
  
    \path[->] 
    
    (s_0) edge [bend right] node [below]  {0,1} (s_1);
          
 \tikzstyle{loop}=[min distance=6mm,in=30,out=-30,looseness=7]
 \path[->]
    (s_1) edge [bend right] node [above]  {0,1} (s_0);
\end{tikzpicture}
\end{center}

Let $A(N) := \EE_{n < N} a(n)$. Since $A\bra{2^{2l-1}}>1/2$ and $A\bra{2^{2l}}= A\bra{2^{2l-1}}/2$ for every $l\in\NN$, $(a(n))$ is Ces\`aro divergent. 

Let $(X,\mu,T)$ be a system and let $f\in L^1(\mu)$ satisfy (i). It follows from the von Neumann decomposition that we may decompose

$$
L^1(\mu)=\Fix(T) \oplus V $$
where $V$ is the closed span of $\set{ h - Th }{ h \in L^\infty(\mu)}$ in $L^1(\mu)$, and $\oplus$ denotes orthogonal sum. Hence, (i) implies that $f \in V$.
By

$$ 
\left|\EE_{n < N} a(n) T^nf \right|\leq \EE_{n < N} T^n|f|
$$
and the classical maximal inequality, the set of functions for which the averages (\ref{eq:weighted-ave}) converge a.e.~is closed in $L^1(\mu)$. So we can assume that $f=h-Th$ for some $h\in L^\infty(\mu)$. Moreover, we can assume without loss of generality that $\|h\|_\infty\leq 1$. Since $a(n)$ is constant on the interval blocks between any two consecutive powers of $2$ we obtain
\begin{eqnarray*}
\left\|\EE_{n < N} a(n) T^nf \right\|_\infty&=&\left\|\EE_{n < N} a(n) (T^nh-T^{n+1}h) \right\|_\infty\\
&\leq& \EE_{n < N} |a(n)-a(n-1)|+\frac2N \\
&\leq& \frac{2\log_2 N +2}N\to 0 \quad \text{as } N\to\infty
\end{eqnarray*}
proving (ii).

Conversely, assume that (ii) holds and that $\la f,g \ra\neq 0$ for some $g\in \Fix(T)\cap L^\infty(\mu)$. Then the averages

$$
\left\la \EE_{n < N} a(n) T^nf,g \right\ra =\EE_{n < N} a(n) \la T^nf,T^ng\ra = \EE_{n < N} a(n) \la f,g\ra
$$
diverge contradicting (ii).
\end{proof}

In Theorem \ref{thm:D} we impose a relatively strong condition of invertibility. To the best of our knowledge, the analogous result might hold for a wider class of sequences; in particular it is possible that the same statement holds for totally balanced sequences. However, our proof does not extend to such sequences because of the use of van der Corput lemma. See Section \ref{sec:Invertible} for further discussion.

\makeatletter{}\section{Preliminaries}\label{sec:Prelims}

In this section, we discuss some basic lemmas which will be useful in subsequent sections. The main new insight is that if $\alpha \in \RR$ and $\beta_n \in \RR$ takes finitely many values then the average $\EE_{n < N} e(n\alpha + \beta_n)$ cannot be close to $1$ for large $N$, unless $\alpha$ is very well approximable by rationals.

\begin{definition}[Equidistribution]
	A sequence $(x_n)_{0\leq n<N}$ taking values in a metric space $(X,d)$ equipped with a Borel probability measure $\mu$ is \emph{$\delta$-equidistributed} if for any Lipschitz continuous function $f \colon X \to \RR$ it holds that 
	\begin{equation}
		\abs{ \EE_{ n < N} f(x_n) - \int_X f d\mu } < \delta \norm{f}_{\mathrm{Lip}}, 	
	\end{equation}	 
	where $\norm{f}_{\mathrm{Lip}} = \norm{f}_\infty + \sup_{x \neq y} \frac{\abs{ f(x) - f(y) }}{ d(x,y) } $.
\end{definition}

The following fact is well-known, for instance it is a very special case of the Quantitative Kronecker Theorem in \cite{GreenTao-2012}.
\begin{proposition}\label{prop:dicho-linear}
	There exists a constant $C$ such that for any $\a \in \RR$, $N \in \NN$ and $\delta$ with $0 < \delta < 1/2$, one of the following holds:
	\begin{enumerate}
		\item the sequence $(n \alpha \bmod{1})_{ n < N}$ is $\delta$-equidistributed in $\RR/\ZZ$; or
		\item there exists $\frac{p}{q} \in \mathbb{Q}$ with $0 < q < 1/\delta^C$ such that $\abs{ \alpha - \frac{p}{q} } < 1/(\delta^C N)$.
	\end{enumerate}
\end{proposition}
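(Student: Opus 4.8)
The plan is to prove the dichotomy by Fourier analysis on $\RR/\ZZ$, in the standard way the quantitative Kronecker (Weyl) equidistribution theorem is proved --- indeed the statement is a special case of the Quantitative Kronecker Theorem of \cite{GreenTao-2012}, but a self-contained argument is short. I would argue the contrapositive: assume alternative (2) fails, i.e.\ $\abs{\a - p/q} \geq 1/(\de^C N)$ for every $p/q \in \QQ$ with $0 < q < 1/\de^C$, and deduce alternative (1), where $C$ is an absolute constant to be fixed at the very end. Throughout, Fourier coefficients are $\hat f(h) = \int_0^1 f(x) e(-hx)\,dx$ and $\fpa{\cdot}$ denotes distance to the nearest integer.

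First I would record two elementary ingredients. For Lipschitz $f \colon \RR/\ZZ \to \RR$ and $h \neq 0$, the substitution $x \mapsto x + \tfrac1{2h}$ yields $2\hat f(h) = \int_0^1 \bra{f(x) - f(x + \tfrac1{2h})}\, e(-hx)\,dx$, hence $\abs{\hat f(h)} \leq \norm{f}_{\mathrm{Lip}}/(4\abs{h})$. And for $h\a \notin \ZZ$, summing a geometric progression and using $\abs{1 - e(h\a)} \geq 4\fpa{h\a}$ gives $\abs{\EE_{n<N} e(hn\a)} \leq \min\bra{1,\ \tfrac{1}{2N\fpa{h\a}}}$. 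The only genuine difficulty is that the Parseval-type expansion $\EE_{n<N} f(n\a) - \int f = \sum_{h \neq 0}\hat f(h)\,\EE_{n<N} e(hn\a)$ does not converge absolutely if one only uses $\abs{\EE_{n<N} e(hn\a)} \leq 1$, so the high frequencies must be cut off by smoothing.

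Accordingly I would convolve $f$ with a Fej\'er kernel $K_H$ of degree $H := \ceil{\de^{-A}}$, where $A$ is an absolute exponent chosen below; then $g := f * K_H$ is a trigonometric polynomial with $\hat g$ supported on $\abs{h} \leq H$, with $\abs{\hat g(h)} \leq \abs{\hat f(h)}$, $\hat g(0) = \int f$, and $\norm{f - g}_\infty \ll \norm{f}_{\mathrm{Lip}}\,(\log H)/H$. Normalising $\norm{f}_{\mathrm{Lip}} = 1$ (the case of constant $f$ being immediate, and the desired inequality being homogeneous), one obtains
\[
	\abs{\EE_{n<N} f(n\a) - \int f} \;\leq\; 2\norm{f-g}_\infty + \sum_{0 < \abs{h} \leq H} \frac{1}{4\abs{h}}\min\bra{1,\ \tfrac{1}{2N\fpa{h\a}}} \;\ll\; \frac{\log H}{H} + \sum_{0 < \abs{h} \leq H} \frac{1}{4\abs{h}}\min\bra{1,\ \tfrac{1}{2N\fpa{h\a}}}.
\]

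Finally I would feed in the failure of (2). For $0 < h \leq H$, let $p$ be the integer nearest to $h\a$ and write $p/h$ in lowest terms as $p'/q'$; then $\fpa{h\a}/h = \abs{\a - p'/q'}$ with $0 < q' \leq h \leq H < 1/\de^C$ as soon as $C \geq A+1$, so the failure of (2) forces $\fpa{h\a}/h \geq 1/(\de^C N)$, i.e.\ $\fpa{h\a} \geq h/(\de^C N)$, whence $\min\bra{1,\ \tfrac{1}{2N\fpa{h\a}}} \leq \de^C/(2h)$. Substituting, the exponential-sum term is $\leq \tfrac{\de^C}{4}\sum_{h\geq 1} h^{-2} < \de^C/2 \leq \de/2$. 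It then remains only to pick $A$ large enough (in terms of the absolute constant implicit in the Fej\'er estimate) that the smoothing error $2\norm{f-g}_\infty$ is $\leq \de/2$ for every $\de \in (0,1/2)$ --- possible since $(\log H)/H \ll A\log(1/\de)\,\de^{A}$ and $\de\log(1/\de) \leq 1/e$ --- and to set $C := A+1$. With these choices $\abs{\EE_{n<N} f(n\a) - \int f} < \de = \de\norm{f}_{\mathrm{Lip}}$, so (1) holds. The main (and only) delicate point is keeping track of absolute constants in the smoothing step; if one prefers to avoid the logarithmic loss one may replace the Fej\'er kernel by a Jackson-type kernel with $\int \fpa{t}\,K_H(t)\,dt \ll 1/H$, after which the rest is routine.
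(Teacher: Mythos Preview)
Your argument is correct; it is the standard Fourier-analytic proof of the one-dimensional quantitative Weyl/Kronecker theorem, and the bookkeeping with the constants $A$ and $C=A+1$ goes through as you say. Note, however, that the paper does not actually give a proof of this proposition: it simply records it as ``well-known'' and cites the Quantitative Kronecker Theorem of Green--Tao \cite{GreenTao-2012} as a reference for the general case. So there is nothing to compare against --- you have supplied a self-contained proof where the paper only gives a citation. One small remark: your final sentence about replacing the Fej\'er kernel by a Jackson kernel to avoid the $\log H$ loss is a nice touch, but since you already absorb the logarithm into the choice of $A$, it is not needed for the argument to close.
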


The following elementary fact will be useful on several occasions.
\begin{lemma}
	For any $r \in \NN$, any $N \in \NN$, and any ${1}/{(100 r^2)}$-equidistributed sequence $(x_n)_{ n < N}$ taking values in  $\RR/\ZZ$, and any partition $[N] = S_1 \cup S_2 \cup \dots \cup S_r$, we have
	\begin{equation}\label{eq:bound-001}
		\sum_{i=1}^r \frac{\abs{S_i}}{N} \abs{ \EE_{n \in S_i} e(x_n) } \leq 1 - \frac{1}{6r^2}.
	\end{equation}
\end{lemma}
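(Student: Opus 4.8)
The plan is to exploit the equidistribution hypothesis in a quantitative way, using the simple fact that if a probability vector $(p_i)_{i=1}^r$ (here $p_i = |S_i|/N$) puts at least a fixed mass on some index, and the averages $\EE_{n\in S_i} e(x_n)$ are not all pointing in essentially the same direction, then $\sum_i p_i |\EE_{n\in S_i} e(x_n)|$ drops below $1$. First I would observe that the triangle inequality gives the trivial bound $\sum_i \frac{|S_i|}{N}|\EE_{n\in S_i} e(x_n)| \le 1$, with equality only in the degenerate case where each partial sum has modulus exactly $\frac{|S_i|}{N}\cdot N/|S_i|=1$, i.e.\ $e(x_n)$ is constant on each $S_i$. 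The goal is to quantify how far this is from happening, and the input that forces a gap is precisely $\frac{1}{100r^2}$-equidistribution of $(x_n)_{n<N}$ in $\RR/\ZZ$.

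The key steps, in order: (1) Write $c_i := \EE_{n\in S_i} e(x_n) \in \CC$ with $|c_i|\le 1$, and set $p_i := |S_i|/N$, so $\sum_i p_i = 1$ and the left side of \eqref{eq:bound-001} is $\sum_i p_i |c_i|$. (2) Note $\sum_i p_i c_i = \EE_{n<N} e(x_n)$, whose modulus is $<\frac{1}{100r^2}$ by testing equidistribution against the two Lipschitz functions $\cos(2\pi x)$ and $\sin(2\pi x)$ (each of Lipschitz norm $\le 1 + 2\pi \le 8$, so one gets $|\EE_{n<N} e(x_n)| < \frac{8\sqrt2}{100 r^2} < \frac{1}{6r^2}$; alternatively apply $\frac{1}{100r^2}$-equidistribution directly to real and imaginary parts). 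Actually it is cleaner to fix a phase $\theta$ so that $\sum_i p_i |c_i| = \sum_i p_i \mathrm{Re}(e(-\theta_i) c_i)$ for suitable unit phases, but the honest route is: (3) Apply the elementary inequality $|c_i| \le 1 - \tfrac12(1 - |c_i|^2)\cdot\frac{1}{?}$... — more precisely, use $|c_i| \le 1 - \tfrac12(1-|c_i|)$ is false; instead use $|c_i|\le \tfrac{1+|c_i|^2}{2}$, valid since $(1-|c_i|)^2\ge 0$. Hence $\sum_i p_i|c_i| \le \tfrac12 + \tfrac12\sum_i p_i |c_i|^2$. (4) Now bound $\sum_i p_i |c_i|^2$ away from $1$: since $|c_i|^2 = |\EE_{n\in S_i} e(x_n)|^2 = \EE_{n,n'\in S_i} e(x_n - x_{n'})$, we get $\sum_i p_i|c_i|^2 = \frac{1}{N}\sum_i \frac{1}{|S_i|}\sum_{n,n'\in S_i} e(x_n-x_{n'})$, and one compares this to $\frac{1}{N^2}\sum_{n,n'<N} e(x_n - x_{n'}) = |\EE_{n<N}e(x_n)|^2$ which is tiny. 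The defect between the two is governed by how much the within-block correlations exceed the global one; bounding $\sum_i p_i |c_i|^2 \le \max_i |c_i|^2 + (\text{small})$ doesn't immediately give $1 - \tfrac{1}{3r^2}$, so I expect the actual argument to instead run as follows.

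The cleaner route — and the one I would commit to — is a convexity/pigeonhole argument directly on the phases. Since $(x_n)_{n<N}$ is $\frac{1}{100r^2}$-equidistributed, for any arc $J\subset\RR/\ZZ$ of length $1/2$ the number of $n<N$ with $x_n\in J$ is $\tfrac12 N \pm O(\tfrac{1}{r^2})N$ (approximating the indicator of $J$ above and below by Lipschitz functions of controlled norm $O(r^2)$, which is exactly why the equidistribution parameter is quadratic in $r$). In particular, by pigeonhole among $r$ blocks there is a block $S_i$ with $|S_i| \ge N/r$, hmm, that alone is not enough. The correct mechanism: for the bound $\sum_i p_i|c_i| \le 1 - \tfrac{1}{6r^2}$ to fail we would need $|c_i| \ge 1 - \tfrac{1}{3r^2}$ for a set of blocks of total $p$-mass $\ge \tfrac12$ (say), and $|c_i|\ge 1-\eta$ for a partial average of $e(x_n)$ over $S_i$ forces, by a standard concentration estimate, all but an $O(\sqrt\eta)$-fraction of $\{x_n : n\in S_i\}$ to lie in a single arc of length $O(\sqrt\eta)$; summing over such blocks shows a positive proportion (bounded below in terms of $r$) of all $n<N$ have $x_n$ in a union of $\le r$ short arcs, contradicting $\frac{1}{100r^2}$-equidistribution once the arcs are short enough. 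Tuning the constants — choosing the thresholds so that the contradiction is sharp and yields exactly the bound $1 - \tfrac{1}{6r^2}$ — is the only delicate bookkeeping.

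The main obstacle, then, is purely the constant-chasing in this last step: matching the equidistribution parameter $\frac{1}{100r^2}$ to the claimed slack $\frac{1}{6r^2}$ requires choosing the auxiliary arc-lengths and mass-thresholds carefully and tracking the $O(r^2)$ Lipschitz norms of the bump functions used to test equidistribution. I do not expect any conceptual difficulty beyond that; the inequality $|c|\le\tfrac{1+|c|^2}{2}$ plus second-moment expansion of $|c_i|^2$ into pair correlations $\EE_{n,n'\in S_i}e(x_n-x_{n'})$, combined with the equidistribution of the doubled sequence (or directly of $(x_n)$ against Fejér-type kernels), should make the whole thing a two-page computation, with the partition entering only through the crude bound "there are at most $r$ parts."
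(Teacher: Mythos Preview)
Your plan is a sketch rather than a proof, and the constant-chasing you defer is genuinely nontrivial along the route you chose. The paper's argument is quite different and avoids all of it.

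The paper's trick is exactly the idea you briefly mentioned and then abandoned: write $|c_i| = \EE_{n\in S_i}\cos(2\pi x_n - \beta_i)$ with $\beta_i = \arg c_i$, and then introduce the single Lipschitz function
\[
  f(x) \;=\; \max_{1\le i\le r}\cos(2\pi x - \beta_i).
\]
Since $\cos(2\pi x_n - \beta_{i}) \le f(x_n)$ for every $i$, one immediately gets
\[
  \sum_i p_i|c_i| \;\le\; \EE_{n<N} f(x_n) \;\le\; \int_0^1 f(x)\,dx \;+\; \frac{\|f\|_{\mathrm{Lip}}}{100r^2}.
\]
Now $\|f\|_{\mathrm{Lip}} \le 1+2\pi$, and on any point of the circle at angular distance $\ge \pi/(2r)$ from every $\beta_i$ one has $f(x)\le\cos(\pi/(2r)) < 1 - 1/(2r^2)$; such points form a set of measure at least $1/2$. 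This gives $\int f \le 1 - 1/(4r^2)$, and $1 - 1/(4r^2) + (1+2\pi)/(100r^2) < 1 - 1/(6r^2)$. That is the entire proof.

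Your concentration/pigeonhole route (if $\sum p_i|c_i|$ is too large then most $x_n$ cluster in $\le r$ short arcs, contradicting equidistribution via a bump function) can be made to work, but matching the constants $1/(100r^2)$ and $1/(6r^2)$ requires several rounds of Markov and a careful choice of arc widths and transition scales; you would have to actually carry this out rather than gesture at it. The second-moment approach via $|c_i|\le (1+|c_i|^2)/2$ that you tried first does not lead anywhere useful here, as you correctly sensed. The max-of-cosines function above replaces all of that with a single application of the equidistribution hypothesis to one explicit test function.
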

\begin{proof}
	Put $\b_i = \arg\bra{ \EE_{n \in S_i} e(n \a)}$ and $f(x) = \max_{1 \leq i \leq r} \cos(2 \pi x - \b_i)$, so that the sum in \eqref{eq:bound-001} becomes
	\begin{equation}\label{eq:bound-002}
		\sum_{i=1}^r \frac{\abs{S_i}}{N} \EE_{n \in S_i} \cos( 2\pi x_n - \b_i ) 
		\leq \EE_{n < N } f(x_n) 
		\leq \int_0^1 f(x) dx + \frac{\norm{f}_{\mathrm{Lip}}}{100r^2}.
	\end{equation}
	It is clear that $\norm{f}_{\mathrm{Lip}} \leq 1+2\pi$ and that $f(x) < \cos(\frac{2\pi}{4r}) < 1 - \frac{1}{2r^2}$ on a union of intervals of length $\geq \frac{1}{2}$, whence the right hand side of \eqref{eq:bound-002} is at most $ 1 - \frac{1}{4r^2} + \frac{1+2\pi}{100 r^2} < 1 - \frac{1}{6r^2} $.
\end{proof}

\begin{corollary}\label{cor:linear-dichotomy}
	For any $r \in \NN$, there exists a constant $Q$ such that for any $\a \in \RR$ and $N \in \NN$, one of the following holds:
	\begin{enumerate}
		\item\label{cond:ld-1} for any partition $[N] = S_1 \cup S_2 \cup \dots \cup S_r$ it holds that $		\sum_{i=1}^r \frac{\abs{S_i}}{N} \abs{ \EE_{n \in S_i} e(n \alpha) } \leq 1 - \frac{1}{6r^2}$;
		\item\label{cond:ld-2} there exists $\frac{p}{q} \in \mathbb{Q}$ with $0 < q \leq Q$ such that $\abs{ \alpha - \frac{p}{q} } < Q/N$. 
	\end{enumerate}	  
\end{corollary}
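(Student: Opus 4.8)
The plan is to obtain this as an essentially immediate consequence of Proposition \ref{prop:dicho-linear} and the preceding Lemma, by a suitable choice of the equidistribution parameter.

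First I would fix $r \in \NN$ and set $\delta = 1/(100 r^2)$, which indeed satisfies $0 < \delta < 1/2$. Let $C$ be the constant furnished by Proposition \ref{prop:dicho-linear}, and define $Q = \lceil \delta^{-C} \rceil = \lceil (100 r^2)^C \rceil$; this is the constant claimed in the statement. The point of choosing $\delta$ this small is precisely that $1/(100r^2)$ is the threshold appearing in the Lemma.

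Now, given arbitrary $\a \in \RR$ and $N \in \NN$, I would apply Proposition \ref{prop:dicho-linear} with this value of $\delta$. In the first alternative, the sequence $(n \a \bmod 1)_{n < N}$ is $\delta$-equidistributed in $\RR/\ZZ$ with $\delta = 1/(100r^2)$, so the Lemma applies verbatim to $x_n = n\a \bmod 1$: for every partition $[N] = S_1 \cup \dots \cup S_r$ we get $\sum_{i=1}^r \frac{\abs{S_i}}{N}\abs{\EE_{n \in S_i} e(n\a)} \leq 1 - \frac{1}{6r^2}$, which is exactly condition \eqref{cond:ld-1}. In the second alternative, Proposition \ref{prop:dicho-linear} produces $\frac{p}{q} \in \QQ$ with $0 < q < \delta^{-C}$ and $\abs{\a - \frac{p}{q}} < 1/(\delta^{-C} N)^{-1}$; wait, more precisely $\abs{\a - \frac{p}{q}} < 1/(\delta^{C} N) = \delta^{-C}/N$. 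Since $q < \delta^{-C} \leq Q$ and $\delta^{-C}/N \leq Q/N$, this is condition \eqref{cond:ld-2}.

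There is no real obstacle here: the only thing to be careful about is that a single constant $Q$ should simultaneously bound the denominator $q$ and control the width $\abs{\a - p/q} < Q/N$ of the rational approximation, and this is automatic because Proposition \ref{prop:dicho-linear} already expresses both quantities through the same power $\delta^{-C}$. The argument is thus a short deduction rather than a computation.
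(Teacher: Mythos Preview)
Your proposal is correct and follows exactly the approach the paper intends: the corollary is stated without proof because it is immediate from Proposition~\ref{prop:dicho-linear} applied with $\delta = 1/(100r^2)$ together with the preceding Lemma, and you have spelled this out accurately. The only cosmetic issue is the self-correcting aside in the second paragraph; cleaning that up would make this a perfectly acceptable written proof.
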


Another result which we will extensively use is the classical van der Corput inequality. One of its many formulation is the following (see e.g.~\cite[Lemma 1.4.3]{Tao-whatsnew2}).

\begin{lemma}[van der Corput inequality]\label{lem:vdCorput}
	Let $x \colon \NN_0 \to \CC$ be a sequence with $\abs{x(n)} \leq 1$ for all $n$. Then for any $H,N \in \NN$ with $H < N$ we have
\begin{equation}
\abs{ \EE_{n<N} x(n) }^2 \ll
{ \EE_{h<H} \abs{ \EE_{n<N} x(n+h) \bar{x}(n) } } + O( H/N ).
\end{equation}	
All implicit constants are absolute (i.e.\ do not depend on $x$, $N$ and $H$).
\end{lemma}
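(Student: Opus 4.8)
The statement is the classical van der Corput inequality, and the plan is to run the standard ``$\ell^2$-doubling'' argument, which converts control of the single average into control of the short-lag autocorrelations at the cost of one application of Cauchy--Schwarz and some boundary book-keeping. Write $\tilde x\colon\ZZ\to\CC$ for the extension of $x$ by zero outside $[0,N)$, so that $\abs{\tilde x(m)}\le 1$ for all $m\in\ZZ$. The starting point is the elementary identity obtained by averaging the trivial relation $\sum_{n<N}x(n)=\sum_{n<N}x(n)$ over $H$ consecutive shifts: reindexing the double sum shows that
\[
	\sum_{h<H}\sum_{n<N}\tilde x(n+h)=H\sum_{n<N}x(n)+E,
\]
where $E$ accounts for those indices $m\in[0,H-1)$ that are hit fewer than $H$ times by the shifts; since there are at most $H$ such indices, each hit at most $H$ times and each with $\abs{\tilde x(m)}\le1$, we get $\abs{E}\le H^2$. (This is where the hypothesis $H<N$ enters, ensuring every $m\in[H-1,N)$ is covered exactly $H$ times.) Dividing by $N$ and pulling the triangle inequality inside the sum over $n$,
\[
	H\,\abs{\EE_{n<N}x(n)}\le\EE_{n<N}\abs{\textstyle\sum_{h<H}\tilde x(n+h)}+O\!\bra{\tfrac{H^2}{N}}.
\]

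Next I would square both sides and apply Cauchy--Schwarz (equivalently Jensen's inequality, $\bra{\EE_{n<N}|Y(n)|}^2\le\EE_{n<N}|Y(n)|^2$) to the average over $n$; using $H<N$ to absorb the cross term and the squared error into $O(H^2/N^2)$, this gives
\[
	\abs{\EE_{n<N}x(n)}^2\ll\frac{1}{H^2}\,\EE_{n<N}\abs{\textstyle\sum_{h<H}\tilde x(n+h)}^2+\frac{H^2}{N^2}.
\]
Expanding the modulus-squared and interchanging the finite sums turns the main term into $\EE_{h_1<H}\EE_{h_2<H}\EE_{n<N}\tilde x(n+h_1)\overline{\tilde x(n+h_2)}$. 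The final step is to recognise the inner average as an autocorrelation at lag $h=\abs{h_1-h_2}$: the substitution $n\mapsto n-\min(h_1,h_2)$ rewrites $\EE_{n<N}\tilde x(n+h_1)\overline{\tilde x(n+h_2)}$ as $\EE_{n<N}\tilde x(n+h)\overline{\tilde x(n)}$ up to an $O(H/N)$ error (at most $H$ summands are affected), and replacing $\tilde x$ by $x$ on $[0,N)$ costs a further $O(H/N)$. Since for each $h<H$ there are at most $2H$ pairs $(h_1,h_2)$ with $\abs{h_1-h_2}=h$, averaging over all $H^2$ pairs is dominated by $2\,\EE_{h<H}\abs{\EE_{n<N}x(n+h)\bar x(n)}+O(H/N)$; combined with $H^2/N^2\le H/N$ this yields the asserted bound.

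There is no substantial obstacle here: the only genuinely nontrivial move is the Cauchy--Schwarz step, and everything else is routine, if slightly delicate, accounting of boundary corrections coming from the truncation $\tilde x$ and from the index shifts. The one point to watch is to keep each such correction of size $O(H)$ --- equivalently $O(H/N)$ after normalising by $N$ --- rather than the cruder $O(H^2)$; this is why one peels the boundary off once at the level of the linear sum and once more at the level of the autocorrelations, instead of bounding the doubled sum in one go.
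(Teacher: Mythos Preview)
Your proof is correct and is the standard argument; the paper itself gives no proof at all, merely citing \cite[Lemma 1.4.3]{Tao-whatsnew2}. The only minor remark is that your bookkeeping description of $E$ is slightly garbled (``each hit at most $H$ times'' should read ``each undercounted by at most $H$''), but the bound $\abs{E}\le H^2$ is right and the rest of the argument goes through cleanly.
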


As an immediate consequence, we note that (with the above notation) to prove that $\EE_{n<N} x(n) \to 0$ as $N \to \infty$, it suffices to prove that for all $h$ except for a set of density $0$ we have $\EE_{n<N} x(n+h) \bar x(n) \to 0$ as $N \to \infty$.

\makeatletter{}\section{Mean convergence}\label{sec:Mean-Convergence}

In this section we finish the proof of Theorem \ref{thm:A}. 
The main technical tool used for this purpose is the following proposition. We could derive this result directly from earlier work by Mauduit \cite[Thm.\ 1]{Mauduit-1986}, but we present an independent argument which motivates the approach we take in subsequent sections when proving Theorems \ref{thm:B} and \ref{thm:D}. For yet another approach, see Remark \ref{rmrk:alter-proof}.

\begin{proposition}\label{prop:cancellation-qualitative}
	Let $a\colon \NN_0 \to \CC$ be a $k$-automatic sequence, and let $\a \in \RR \setminus \QQ$. Then 
	\begin{equation}
		\label{eq:853}
		\lim_{N \to \infty} \EE_{n < N} a(n) e(n\a) = 0.
	\end{equation}
\end{proposition}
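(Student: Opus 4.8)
The plan is to exploit the self-similar structure of automatic sequences encoded by the $k$-kernel, combined with the equidistribution dichotomy of Corollary~\ref{cor:linear-dichotomy}. Write $a$ as produced by an automaton $\cA = (S, s_0, \delta, \tau)$ ignoring leading zeros, and consider the decomposition of $[N]$ into blocks of the form $I = [m k^l, (m+1)k^l)$ exactly as in the proof of Lemma~\ref{lem:cancellation-ThueMorse}: greedily peel off the largest possible block. On such a block, the integers are $n = k^l m + j$ with $j < k^l$, and $a(n)$ depends only on $j$ and on the state $s = \delta(s_0, (m)_k)$ reached after reading the high-order digits; that is, $a(k^l m + j) = \tau(\delta(s, (j)_k^l)) = a_s(j)$, where $a_s$ ranges over the finite family of kernel sequences indexed by states. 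Thus $\EE_{n \in I} a(n) e(n\alpha) = e(k^l m \alpha)\, \EE_{j < k^l} a_s(j) e(j\alpha)$, and the whole average over $[N]$ is a convex combination, with weights $k^{l_i}/N$, of terms of modulus at most $A_s(l_i, \alpha) := \abs{\EE_{j < k^{l}} a_s(j) e(j\alpha)}$.

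The key step is to show that the quantities $A_s(l, \alpha)$ decay geometrically in $l$, uniformly in $s$, for fixed irrational $\alpha$. Here is where the co-kernel / self-similarity enters: reading the top digit $d \in \Sigma_k$ of a length-$(l)$ word splits $[k^l]$ into $k$ arithmetic progressions $d k^{l-1} + [k^{l-1}]$, on each of which $a_s$ restricts to another kernel sequence $a_{s'}$ with $s' = \delta(s, d)$ (or rather a co-kernel twist thereof). This gives a recursion expressing $A_s(l,\alpha)$ as (a phase-shifted average of) the $A_{s'}(l-1, \alpha)$, i.e.\ $\EE_{j<k^l} a_s(j) e(j\alpha) = \EE_{d < k} e(d k^{l-1}\alpha)\, \EE_{j' < k^{l-1}} a_{\delta(s,d)}(j') e(j'\alpha)$, so $A_s(l,\alpha) \le \frac1k \sum_{d<k} A_{\delta(s,d)}(l-1,\alpha)$. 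One now wants: after boundedly many steps the total mass contracts by a fixed factor $< 1$. Apply Corollary~\ref{cor:linear-dichotomy} with $r = \abs{S}$: since $\alpha$ is irrational, for $N$ (hence $l$) large enough we are in case~\eqref{cond:ld-1}, so for the partition of $[k^{l_0}]$ into the $\abs{S}$ fibers according to which state is reached, the weighted sum of the partial averages drops by $1 - 1/(6\abs{S}^2)$. Iterating over blocks of length $l_0$ yields $A_s(l,\alpha) \ll \theta^{l}$ for some $\theta = \theta(\alpha) < 1$ and all $s$. Feeding this back into the block decomposition, $\abs{\EE_{n<N} a(n) e(n\alpha)} \le \sum_i \frac{k^{l_i}}{N} A_{s_i}(l_i,\alpha) \ll \sum_i k^{l_i}\theta^{l_i}/N$, and since the $l_i$ are distinct this is $\ll N^{-c}$ for some $c > 0$, which is more than enough.

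The main obstacle is making the contraction step genuinely uniform over states and independent of scale. The naive recursion $A_s(l,\alpha) \le \frac1k\sum_d A_{\delta(s,d)}(l-1,\alpha)$ alone gives no decay — it could be an equality with all terms equal to $1$. What saves us is that equality would force the phases $e(d k^{l-1}\alpha)$ to align for all $d < k$ across many scales $l$, and more to the point, the sequence $n \mapsto a(n)e(n\alpha)$ restricted to the relevant blocks is governed by a $\abs{S}$-part partition to which the equidistribution lemma applies. Care is needed here because $k^{l-1}\alpha \bmod 1$ varies with $l$, so one should phrase the iterated recursion in terms of the single average $\EE_{n<k^{l_0}} a_s(n) e(n\alpha)$ over a fixed-size scale $l_0$ (chosen once $\alpha$ is fixed, via Corollary~\ref{cor:linear-dichotomy} applied at that scale) and use $\delta(s_0,\cdot)$ over length-$l_0$ words to realize the state partition, rather than recursing one digit at a time. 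Leading-zero conventions and the degenerate case where some kernel sequence is eventually constant need a brief separate check, but these are routine given the standing assumptions on $\cA$. One could alternatively invoke \cite[Thm.~1]{Mauduit-1986} directly, but the above argument is the one that generalizes to the quantitative and polynomial settings needed later.
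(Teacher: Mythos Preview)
Your overall plan --- bound state-indexed averages via a recursion, extract a contraction from Corollary~\ref{cor:linear-dichotomy}, then combine via a block decomposition of $[N]$ --- matches the paper's. But the recursion you write is set up in the wrong direction, and this is a genuine gap. When you split off the \emph{top} $l_0$ digits of a length-$l$ word, the iterated recursion reads
\[
A_s(l,\alpha) \le \Bigl(\sum_{s'} \tfrac{|P_{s,s'}|}{k^{l_0}}\, \bigl| \EE_{w \in P_{s,s'}} e(w k^{l-l_0}\alpha) \bigr| \Bigr)\, \max_{s'} A_{s'}(l-l_0,\alpha),
\]
so the contraction requires Corollary~\ref{cor:linear-dichotomy} at frequency $k^{l-l_0}\alpha$, not at $\alpha$. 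No single $l_0$ works for all $l$: for Liouville $\alpha$ one has $\fpa{k^m\alpha}$ arbitrarily small infinitely often, and condition~\eqref{cond:ld-2} holds at those scales however large $l_0$ is. You correctly flag that ``$k^{l-1}\alpha \bmod 1$ varies with $l$'', but the proposed fix (work at a fixed scale $l_0$) does not help --- it is the \emph{frequency} entering the equidistribution step, not the scale, that is uncontrolled.

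The paper splits off the \emph{bottom} $l$ digits instead: writing $[uv]_k = k^l[u]_k + [v]_k$ with $|v| = l$, one partitions $\Sigma_k^l$ according to which $b \in \cN_k(a)$ satisfies $a(uv) = b(u)$, and the phase on this partition is $e([v]_k\alpha)$ --- the \emph{original} $\alpha$ at scale $k^l$. Now Corollary~\ref{cor:linear-dichotomy} applies for one fixed $l = l(\alpha)$. The residual average sits at the shifted frequency $k^l\alpha$, so rather than iterate, the paper passes to $A(a,\alpha) = \limsup_L A(L,a,\alpha)$ and then $A(a) = \sup_{\alpha \notin \QQ} A(a,\alpha)$, obtaining the self-improving inequality $\max_{b \in \cN_k(a)} A(b) \le (1-c)\max_{b \in \cN_k(a)} A(b)$ and hence $A(a) = 0$. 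Only qualitative convergence is obtained here; the geometric rate $\theta^l$ you assert does not follow from this argument and is not needed for Proposition~\ref{prop:cancellation-qualitative}.
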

\begin{proof}
	For an automatic sequence $a \colon \Sigma^*_k \to \CC$, denote 
	
	\begin{align*}
		A(L,a,\a) &=  \abs{ \EE_{u \in \Sigma_k^L } a(u) e\bra{ [u]_k \a } },\\
		A(a,\a) &= \displaystyle \limsup_{L \to \infty} A(L,a,\a),\\
		A(a) &= \displaystyle \sup_{\a \in \RR \setminus \QQ } A(a,\a).
	\end{align*}

	 Our first goal is to prove that $A(a) = 0$ for any choice of $a$. Observe that
	 
	\begin{align*}
		A(L,a,\a)  & = \abs{ \EE_{ u \in \Sigma_k^{L-l}  } \EE_{ v \in \Sigma_k^l  } a\bra{ u v } e( k^l [u]_k \a) e( [v]_k \a) } 
		\\ & \leq \sum_{b \in \cN_k(a) } \frac{\abs{S_b^l}}{k^l} A(L-l,b,k^l\a) \abs{ \EE_{m \in S_b^l}  e(m \a)} ,
			\end{align*}
	where $S_b^l = \set{ [v]_k }{ v \in \Sigma_k^l ,\ a\bra{ u v} = b\bra{u} \text{ for all } u \in \Sigma_k^*}$. Fixing the value of $l$ and sending $L$ to infinity, we conclude that
	 
	\begin{align*}
	A(a,\a) & \leq \sum_{b \in \cN_k(a) }  \frac{\abs{S_b^l}}{k^l} \abs{ \EE_{m \in S_b^l}  e(m \a) } A(a,k^l\a).
	\end{align*}
	Using Corollary \ref{cor:linear-dichotomy} with $r = \abs{\cN_k(a)}$, and letting $l$ be sufficiently large that condition (\ref{cond:ld-2}) in Corollary \ref{cor:linear-dichotomy} does not hold, we obtain
	
	\begin{align*}
	A(a,\a) & \leq  (1- c) \max_{b \in \cN_k(a)} A(b),
	\end{align*}
	where  $c = {1}/\bra{6 \abs{\cN_k(a)^2}} > 0$. Letting $\a$ vary and repeating the same argument for all $b \in \cN_k(a)$, we conclude that 
	
	\begin{align*}
	 \max_{b \in \cN_k(a)} A(b) \leq (1-c) \max_{b \in \cN_k(a)} A(b),
	\end{align*}
	which is only possible when $A(a) = \max_{b \in \cN_k(a)} A(b) = 0$. In particular, we conclude that \eqref{eq:853} holds for $N$ restricted to powers of $k$.
	
	We now proceed to prove \eqref{eq:853} for arbitrary $N$. Assume that $a(u)$ ignores leading $0$'s, and identify it with a sequence $\NN_0 \to \CC$. 
	For any $N$, take $L = \floor{ \frac{9}{10} \log_k N}$ and $M = \floor{ N/k^L}$. Splitting $[N]$ into intervals of the form $[mk^L,(m+1)k^L)$ and $[Mk^L,N)$, we obtain
	
	\begin{align*}
	\EE_{n < N} a(n)e(n \alpha)  
	 & \leq \max_{b \in \cN'_k(a)} A(L,b,\alpha) + O\bra{1/M}
	\to 0 \text{ as } N \to \infty. \qedhere
	\end{align*}
	\end{proof}
	
\begin{corollary}\label{prop:cancellation-qualitative-polynomial}
Let $a \colon \NN_0 \to \CC$ be a $k$-automatic sequence, and let $p \in \RR[x]$ be a polynomial with at least one irrational coefficient other than the constant term. Then 
	\begin{equation}
		\lim_{N \to \infty} \EE_{n < N} a(n) e( p(n) ) = 0.
	\end{equation}
\end{corollary}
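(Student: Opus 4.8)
\emph{Proof proposal.} The plan is to induct on $d=\deg p$, using the van der Corput inequality (Lemma~\ref{lem:vdCorput}) to lower the degree by one at each step, with Proposition~\ref{prop:cancellation-qualitative} serving as the base case. Since an automatic sequence takes finitely many values, we may rescale and assume $\abs{a(n)}\le 1$. For $d=1$ we have $p(x)=\a x+\b$ with $\a$ irrational (the only non‑constant coefficient), so $\EE_{n<N}a(n)e(p(n))=e(\b)\,\EE_{n<N}a(n)e(n\a)\to 0$ by Proposition~\ref{prop:cancellation-qualitative}.

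For the inductive step, suppose $\deg p=d\ge 2$ and write $p(x)=\sum_{i=0}^d a_i x^i$ with $a_j\notin\QQ$ for some $j\ge 1$; I would split according to the leading coefficient $a_d$. If $a_d\in\QQ$, then $n\mapsto a_d n^d\bmod 1$ is periodic (writing $a_d n^d=\tfrac1Q r(n)$ with $r\in\ZZ[x]$, one has $r(n)\bmod Q$ periodic of period dividing $Q$), so $e(a_d n^d)$ is a periodic, hence $k$‑automatic, sequence; then $\tilde a(n):=a(n)e(a_d n^d)$ is $k$‑automatic and bounded, while $\tilde p(x):=p(x)-a_d x^d$ has degree $<d$ and still carries the irrational coefficient $a_j$ in a positive‑degree term (as $j<d$). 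The inductive hypothesis applied to $\tilde a$ and $\tilde p$ closes this case. If instead $a_d\notin\QQ$, I would apply van der Corput to $x(n)=a(n)e(p(n))$: for each $h\ge 1$ the sequence $b_h(n):=a(n+h)\overline{a(n)}$ is $k$‑automatic (automatic sequences are closed under $n\mapsto n+h$, under applying conjugation to the output, and under products) and bounded by $1$, and $\Delta_h p(x):=p(x+h)-p(x)$ has degree exactly $d-1\ge 1$ with leading coefficient $d\,a_d\,h\notin\QQ$. By the inductive hypothesis, $\EE_{n<N}b_h(n)e(\Delta_h p(n))\to 0$ as $N\to\infty$ for every $h\ge 1$; since $\EE_{n<N}x(n+h)\overline{x(n)}=\EE_{n<N}b_h(n)e(\Delta_h p(n))$, the observation following Lemma~\ref{lem:vdCorput} (vanishing of the correlations for all $h$ outside a density‑zero set suffices) gives $\EE_{n<N}x(n)\to 0$.

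The point that requires care — and the reason the two cases cannot be merged — is $a_d\in\QQ$: a bare van der Corput step there would replace $p$ by $\Delta_h p$ whose top coefficient $d\,a_d\,h$ is \emph{rational}, and iterating could destroy all irrationality (for instance $p(x)=\tfrac12 x^2+\sqrt 2\,x$ gives $\Delta_h p(x)=hx+\mathrm{const}$, a purely rational phase), leaving $b_h$ with no reason to average to $0$. Absorbing the rational leading term into the automatic weight via the periodicity of $e(a_d n^d)$ is precisely what circumvents this, and I expect it to be the only genuinely non‑routine aspect; the remaining ingredients — boundedness of automatic sequences, their closure under $n\mapsto n+h$, conjugation and multiplication, and periodicity of $e(q(n))$ for $q\in\QQ[x]$ — are standard and already implicit in the earlier sections.
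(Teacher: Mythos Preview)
Your proof is correct and follows essentially the same strategy as the paper: reduce to the case of an irrational leading coefficient, then apply van der Corput (Lemma~\ref{lem:vdCorput}) and induct on the degree, using Proposition~\ref{prop:cancellation-qualitative} as the base case. The only difference is cosmetic --- the paper handles a rational leading coefficient in one shot by splitting $[N]$ into arithmetic progressions (on each progression the rational top part of $p$ contributes a constant phase and the restricted sequence is again $k$-automatic with irrational leading coefficient), whereas you peel off the rational top monomial $e(a_d n^d)$ by absorbing it into the automatic weight; both moves rest on the same observation that $e(q(n))$ is periodic for $q\in\QQ[x]$ and that automatic sequences are closed under the relevant operations.
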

\begin{proof} 
	Splitting $[N]$ into a union of arithmetic progressions, we may assume that the leading coefficient of $p$ is irrational. We proceed by induction on $\deg p$. The case when $\deg p = 1$ follows easily from Proposition \ref{prop:cancellation-qualitative}.
	
	If $\deg p \geq 2$, then using the van der Corput Lemma, it will suffice to verify that for each $h \in \NN$,
	\begin{equation}
		\limsup_{N \to \infty} \abs { \EE_{ n < N} a(n) \bar{a}(n+h) e (\Delta_h p (n) ) } = 0,
	\end{equation}
	where $\Delta_h p(x) = p(x+h) - p(x)$. Since $a(n) \bar{a}(n+h)$ is a $k$-automatic sequence, $\deg \Delta_h p = \deg p - 1$ and the leading coefficient of $\deg \Delta_h p$ is irrational, this follows from the inductive claim.
\end{proof}

\begin{remark}\label{rmrk:alter-proof}
As was pointed out by one of the referees, an alternative strategy to prove Proposition \ref{prop:cancellation-qualitative} would be to use the fact that
automatic sequences come from fixed points of constant length substitutions, see  \cite[Cobham's Thm.\ 5.1]{Queffelec-2010}. Since for primitive substitutions the corresponding system is uniquely ergodic \cite[Michel's Thm.\ 5.6]{Queffelec-2010} and has only rational measurable ($=$ continuous) eigenvalues \cite[Prop.\ 6.1 and Thm.\ 6.2]{Queffelec-2010}, it remains to reduce to primitive systems, cf.~\cite[Robinson's Thm.\ 4.10]{Queffelec-2010}. Moreover, using Furstenberg's product construction \cite[Sect.\ 4.4.3]{EinsiedlerWard} one can deduce Corollary \ref{prop:cancellation-qualitative-polynomial} without using the van der Corput Lemma. We do not go into the details here. 
\end{remark}

\begin{proof}[Proof of Theorem \ref{thm:A}]
	Immediate from Corollary \ref{prop:cancellation-qualitative-polynomial} and Propositon \ref{prop:reduction:B}.
\end{proof} 

\makeatletter{}\section{Growth rate of partial sums}

In this section we prove a lemma describing possible growth of partial sums of automatic sequences. Questions of this type have been extensively studied and our estimate is rather standard, but we provide a detailed proof for the convenience of the reader.

To provide context, let us introduce the notion of a $k$-regular sequence (first put forward in \cite{AlloucheShallit-1992}). A sequence $a \colon \NN_0 \to \ZZ$ is said to be $k$-\emph{regular} if $\cN_k(a)$ is a finitely generated $\ZZ$-module; the same definition makes sense with other domains in place of $\ZZ$. Any $k$-automatic sequence is automatically $k$-regular, and it can be shown that conversely a finitely valued $k$-regular sequence is $k$-automatic \cite[Theorem 16.1.5]{AlloucheShallit-book}. Moreover, if $a(n)$ is a $k$-regular sequence, then the sequence of partial sums $(\Sigma a)(n) = \sum_{m < n} a(m)$ is again $k$-regular. In particular, partial sums of automatic sequences are regular.

In \cite{BellCoonsHare-2014}, Bell, Coons and Hare showed that if $a(n)$ is an unbounded regular sequence then $\abs{a(n)} \gg \log n$ for infinitely many $n$. A more precise description was obtained by the same authors in \cite{BellCoonsHare-2016}: $\limsup_{n \to \infty} \log \abs{a(n)}/\log \log n \in \NN \cup \{\infty\}$. Related results are also obtained in \cite{Dumas-2013,Dumas-2014}.

\begin{proposition}\label{prop:violet=>very-violet}
	Fix $k \geq 2$. Let $a \colon \NN_0 \to \CC$ be a $k$-automatic sequence with $\EE_{n < N} a(n) = o(N)$. Then, there exists a constant $c > 0$ such that $\EE_{n < N} a(n) = O(N^{-c})$.
\end{proposition}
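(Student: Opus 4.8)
\textbf{Proof strategy for Proposition \ref{prop:violet=>very-violet}.}

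The plan is to exploit the self-similar structure of automatic sequences to show that the partial sums $(\Sigma a)(N) = \sum_{n<N} a(n)$ grow like a power of $N$ strictly smaller than $1$, once we know they are $o(N)$. The natural framework is the one already used in the proof of Proposition \ref{prop:cancellation-qualitative}: for each $b$ in the (finite) $k$-kernel $\cN_k(a)$, set $M(L,b) = \abs{\EE_{u \in \Sigma_k^L} b(u)}$, and let $M(L) = \max_{b \in \cN_k(a)} M(L,b)$. The digit-splitting identity $\EE_{u \in \Sigma_k^{L}} b(u) = \EE_{v \in \Sigma_k^l} \EE_{w \in \Sigma_k^{L-l}} b(wv)$ expresses $\EE_{u \in \Sigma_k^L} b(u)$ as a convex combination $\sum_{c \in \cN_k(b)} \frac{\abs{S_c^l}}{k^l}\, \EE_{w \in \Sigma_k^{L-l}} c(w)$ where the coefficients $\frac{\abs{S_c^l}}{k^l}$ sum to $1$ and $\cN_k(b) \subseteq \cN_k(a)$. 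This is the same recursion as before, but now with $\alpha = 0$, so there is no arithmetic-progression obstruction to worry about; the whole issue is whether the convex combination can have all its mass concentrated so as to avoid cancellation.

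First I would prove the \emph{qualitative} statement $M(L) \to 0$, i.e.\ $\EE_{u \in \Sigma_k^L} b(u) \to 0$ for every $b \in \cN_k(a)$: this follows because the hypothesis $\EE_{n<N} a(n) = o(N)$ forces (via the interval-decomposition argument at the end of the proof of Proposition \ref{prop:cancellation-qualitative}, applied with $\alpha = 0$) that $\EE_{n<N} b(n) \to 0$ along all $N$ for every $b$ in the co-kernel, and in particular along $N = k^L$, which gives $M(L,b) \to 0$. Now the key quantitative step: fix $l$ large enough that $M(l) < \tfrac12$ (say), set $\rho = \tfrac12$, and feed this back into the recursion. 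From $M(L,b) \leq \sum_{c} \frac{\abs{S_c^l}}{k^l} M(L-l,c) \leq M(L-l)$ we only get monotonicity, which is not enough by itself --- I need to use that each \emph{individual} term $\EE_{v\in\Sigma_k^l} c(v)$ appearing when we expand one more level is bounded by $M(l,c) \le M(l) < \tfrac12$, so the ``new'' averages introduced at depth $l$ are uniformly bounded away from $1$ in modulus. The cleanest way to package this: show $M(jl) \leq (\text{const}) \cdot \rho^{j}$ for a fixed $\rho < 1$ by iterating the inequality $M((j{+}1)l) \le \rho' M(jl) + (\text{error})$ --- but in fact the pure recursion $M(L,b) \le \sum_c \frac{\abs{S_c^l}}{k^l} M(L-l,c)$, combined with the fact that after finitely many steps of unfolding at least a definite fraction of the convex weight sits on averages of modulus $< 1$, yields geometric decay $M(L) \ll \rho^{L}$ with $\rho < 1$; this is exactly the contraction estimate used for $A(a,\alpha)$ in Proposition \ref{prop:cancellation-qualitative}, with the role of Corollary \ref{cor:linear-dichotomy} now played by the trivial observation that a convex combination of numbers in the closed unit disc, a fixed fraction of which lie in a disc of radius $1-\eta$, has modulus at most $1 - \eta'$.

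Having obtained $\abs{\EE_{u \in \Sigma_k^L} b(u)} \ll \rho^L$ for all $b \in \cN_k(a)$ and all $L$, with $\rho < 1$, I would finish exactly as in Proposition \ref{prop:cancellation-qualitative}: for arbitrary $N$, greedily decompose $[N]$ as a disjoint union of blocks $I_j = [m_j k^{l_j}, (m_j{+}1)k^{l_j})$ with $l_1 > l_2 > \cdots$, write $\EE_{n \in I_j} a(n) = \EE_{u \in \Sigma_k^{l_j}} b_j(u)$ for an appropriate co-kernel element $b_j$, and bound
$$
\abs{\EE_{n<N} a(n)} \le \sum_j \frac{k^{l_j}}{N} \abs{\EE_{n\in I_j} a(n)} \ll \sum_j \frac{k^{l_j}\rho^{l_j}}{N} \ll \frac{1}{N}\sum_{l \ge 0}(k\rho)^l \ll N^{-c},
$$
where $c > 0$ is chosen so that $k^{1-c} \le k\rho$ works out after one checks $k^{l_j} \le N$ for the largest block; a cleaner route is to note the largest block has $l_1 = \floor{\log_k N}$ or so, giving $k^{l_1}\rho^{l_1} \asymp N \rho^{\log_k N} = N^{1 - c}$ with $c = \log_k(1/\rho) > 0$, and the geometric sum over $j$ is dominated by its largest term. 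Thus $\EE_{n<N} a(n) = O(N^{-c})$.

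\textbf{Main obstacle.} The delicate point is the passage from the qualitative statement $M(L) \to 0$ to the geometric rate $M(L) \ll \rho^L$. Monotonicity of $M(L-l) \mapsto M(L)$ under the recursion is immediate, but strict contraction requires an honest argument that when one unfolds the recursion to sufficient depth, a uniformly positive fraction of the total convex weight $\frac{\abs{S_c^l}}{k^l}$ necessarily lands on averages of modulus bounded away from $1$ --- equivalently, that one cannot have $\EE_{u\in\Sigma_k^L}b(u)$ of modulus arbitrarily close to $1$ while $M(l)<\tfrac12$; this is where the finiteness of the kernel and the $o(N)$ hypothesis (ruling out the degenerate case where $a$ is, say, eventually constant with nonzero mean) are both genuinely used. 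Everything else is bookkeeping.
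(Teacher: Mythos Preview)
There are two genuine problems with the proposal.

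\textbf{Kernel versus co-kernel.} You set up the recursion for $b \in \cN_k(a)$ but then justify the qualitative step $M(L,b)\to 0$ by invoking ``every $b$ in the co-kernel''. These are different objects, and here the distinction matters: for $b\in\cN_k(a)$, say $b(n)=a(k^jn+m)$, the hypothesis $\EE_{n<N}a(n)=o(1)$ does \emph{not} propagate --- take $a(n)=(-1)^n$, which is balanced, yet $a(2n)\equiv 1$. The telescoping argument you have in mind only works for the co-kernel: if $b(u)=a(vu)$ with $m=[v]_k$, then
\[
\EE_{u\in\Sigma_k^L} b(u)=\frac{1}{k^L}\Bigl(\sum_{n<(m+1)k^L}a(n)-\sum_{n<mk^L}a(n)\Bigr)\to 0.
\]
So you should work in $\cN'_k(a)$ throughout; the recursion $s_b(L{+}1)=\sum_c \alpha_{b,c}\, s_c(L)$ with $\alpha_{b,c}\ge 0$, $\sum_c\alpha_{b,c}=1$ holds there too.

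\textbf{The contraction step fails.} Even after that repair, your passage from $M(L)\to 0$ to $M(L)\ll\rho^L$ does not go through. The matrix $A=(\alpha_{b,c})$ is row-stochastic, hence has spectral radius exactly $1$; there is no $\rho<1$ with $M(L)\le\rho\,M(L{-}l)$ valid for all initial data, and the convex-combination inequality yields only $M(L)\le M(L{-}l)$. Your ``trivial observation'' would need a fixed positive fraction of the convex weight to sit on terms $s_c(L{-}l)$ of modulus $\le 1-\eta$, but nothing produces such a bound: with $\alpha=0$ the phase cancellation that drove Corollary~\ref{cor:linear-dichotomy} in Proposition~\ref{prop:cancellation-qualitative} is absent, and the analogue of the sum in \eqref{eq:450} is simply $\sum_c \frac{|S_c^l|}{k^l}\cdot 1 = 1$. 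Unfolding to depth $l$ gives $s_b(L)=\sum_c(A^{L-l})_{b,c}\,s_c(l)$, a convex combination of numbers of modulus $\le M(l)$, so $|s_b(L)|\le M(l)$ --- again no iterable gain.

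What actually works is to use the \emph{equality} rather than the inequality: the vector $v(L)=(s_b(L))_{b\in\cN'_k(a)}$ satisfies $v(L{+}1)=Av(L)$ for a fixed matrix $A$, and if $A^Lv(0)\to 0$ then $v(0)$ has no component in any generalised eigenspace of $A$ with $|\lambda|\ge 1$, whence $\|v(L)\|\ll\rho^L$ for $\rho<1$ the largest sub-unit modulus of an eigenvalue. This is the content of the paper's one-line ``a recursive sequence tending to $0$ tends to $0$ at an exponential rate'', and it genuinely needs the linear structure, not a sup-norm contraction. Once that is in place, your block decomposition for general $N$ (or the paper's variant with $L=\lfloor\tfrac{9}{10}\log_k N\rfloor$) is fine.
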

\begin{proof}
	Fix the choice of $a$, and assume it is generated by an automaton which ignores leading $0$'s. Our first goal is to show that there exists $c > 0$ such that for all $b \in \cN'_k(a)$ we have the bound 
\begin{equation}
	\EE_{u \in \Sigma_k^L} b(u) = O( k^{-cL}).
	\label{eq:bound-435}
\end{equation}

		Because $\cN'_k(a)$ is finite, it suffices to prove the claim for a single $b \in \cN'_k(a)$. Let $b(u) = a(vu)$ for some $v \in \Sigma_k^*$. Writing $m = [v]_k$ we have
		
	\begin{align*}
		\EE_{u \in \Sigma_k^L} b(u) 
		&= \EE_{n < k^L} a(k^L m + n) 
		= \frac{1}{k^L} \sum_{n < (m+1) k^L} a(n) - \frac{1}{k^L} \sum_{n < m k^L} a(n) = o(1),
	\end{align*}
	as $L \to \infty$. Denote $s_b(L) := \EE_{u \in \Sigma_k^L} b(u)$. Using automaticity of $b(u)$, we obtain a linear recurrence $s_b(L+1) = \sum_{d \in \cN'_k(a)} \alpha_{b,d} s_d(L)$ for some coefficients $\alpha_{b,d} \in \RR_{\geq 0}$. Since a recursive sequence tending to $0$ tends to $0$ at an exponential rate, we obtain \eqref{eq:bound-435}.
	
	We have thus proved the desired bound for $N = k^L$. For general $N$, pick $L = \floor{ \frac{9}{10} \log N}$, $M = \floor{N/k^L}$ so that we may bound
	
\begin{align*}
	\EE_{n < N} a(n) 
	&= \EE_{m < M} \EE_{n < k^L} a(k^L m + n) + O\bra{N^{-1/10}}
	\\&= O\bra{ \max_{b \in \cN'_k(a)} \abs{ \EE_{u \in \Sigma_k^L } b(u) }} + O\bra{N^{-1/10}}
	= O\bra{ N^{-9c/10} + N^{-1/10} }. \qedhere
\end{align*} 
\end{proof}

Recall that we call a sequence $a \colon \NN_0 \to \CC$ totally balanced if for any periodic sequence $b \colon \NN_0 \to \CC$ we have $\EE_{n < N} a(n) b(n) = o(1)$ as $N \to \infty$ (where the rate of convergence is of course allowed to depend on $b(n)$). 

\begin{corollary}\label{cor:antiper=>strongly-antiper}
	Suppose that $a(n)$ is a totally balanced automatic sequence. Then for any periodic sequence $b(n)$ there exists a constant $c > 0$ such that 
	
	$$\EE_{n < N} a(n) b(n) = O(N^{-c}).$$
\end{corollary}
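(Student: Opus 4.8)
The plan is to reduce Corollary~\ref{cor:antiper=>strongly-antiper} to Proposition~\ref{prop:violet=>very-violet} by passing to arithmetic progressions. Suppose $b(n)$ is periodic with period $q$, and write $b(n) = \sum_{r=0}^{q-1} b(r) \mathbf{1}_{n \equiv r \smod{q}}$. Then
\begin{equation*}
	\EE_{n < N} a(n) b(n) = \sum_{r=0}^{q-1} b(r) \EE_{n<N} a(n) \mathbf{1}_{n \equiv r \smod q} = \sum_{r=0}^{q-1} \frac{b(r)}{q} \cdot \frac{\abs{\{n<N : n \equiv r \smod q\}}}{N/q}\, \EE_{m < N_r} a(qm+r),
\end{equation*}
where $N_r = \abs{\{ n < N : n \equiv r \smod q\}} = N/q + O(1)$. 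So up to an error of $O(1/N)$, $\EE_{n<N} a(n) b(n)$ is a finite linear combination (with coefficients bounded independently of $N$) of the averages $\EE_{m < N_r} a(qm+r)$ for $r = 0, 1, \dots, q-1$, and each $N_r \to \infty$ as $N \to \infty$ at a rate comparable to $N$.

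Now for each fixed $q, r$, the sequence $m \mapsto a(qm+r)$ is again $k$-automatic, since the class of $k$-automatic sequences is closed under restriction to arithmetic progressions (as recalled in Section~\ref{sec:Definitions}). Moreover, it is balanced: by the totally balanced hypothesis applied to the periodic sequence $\mathbf{1}_{n \equiv r \smod q}$ we have $\EE_{n<N} a(n) \mathbf{1}_{n \equiv r \smod q} \to 0$, which (again up to the $O(1/N)$ discrepancy between $N_r$ and $N/q$) forces $\EE_{m<M} a(qm+r) \to 0$ as $M \to \infty$. Hence Proposition~\ref{prop:violet=>very-violet} applies to each $a(qm+r)$, yielding a constant $c_r > 0$ with $\EE_{m<M} a(qm+r) = O(M^{-c_r})$.

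Finally, I would set $c = \min_{0 \le r < q} c_r > 0$ (a minimum over finitely many positive constants, hence positive) and assemble the bound: since $N_r \gg N$ uniformly in $r$, we get $\EE_{m<N_r} a(qm+r) = O(N_r^{-c}) = O(N^{-c})$, and summing the $q$ terms (with bounded coefficients) together with the $O(1/N)$ error gives $\EE_{n<N} a(n) b(n) = O(N^{-c})$, as desired. The only mild technical point to be careful about — and the closest thing to an obstacle — is bookkeeping the discrepancy between the genuine average over the residue class in $[N]$ and the average $\EE_{m<N_r}$ over a clean initial block; since this discrepancy is $O(1/N)$ and we are proving a polynomial-rate bound, it is harmlessly absorbed. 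Everything else is routine.
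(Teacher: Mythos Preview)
Your proof is correct, but the paper's argument is more direct: it simply observes that $a(n)b(n)$ is itself a $k$-automatic sequence (since periodic sequences are $k$-automatic for every $k$, and the class is closed under products) which is balanced by the very definition of ``totally balanced,'' and then applies Proposition~\ref{prop:violet=>very-violet} once to $a(n)b(n)$. Your route---decomposing $b$ into residue-class indicators and applying Proposition~\ref{prop:violet=>very-violet} separately to each $a(qm+r)$---reaches the same conclusion via $q$ applications instead of one, and incurs the (harmless) bookkeeping you flagged. Both arguments are really the same reduction; the paper's version just avoids unpacking the product.
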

\begin{proof}
	Immediate application of Proposition \ref{prop:violet=>very-violet} to $a(n)b(n)$.
\end{proof}

\makeatletter{}\section{Aperiodicity}

Even though a totally balanced sequence $a(n)$ is guaranteed to have mean  $0$ along any arithmetic subsequence in the sense that 

$$\EE_{n<N} a(qn+r) = o(1),$$ 
it is by no means guaranteed that various sequences $a(qn + r)$ ($0 \leq r < q$) obtained by restricting $a(n)$ to arithmetic subsequences are in any way related. 

For instance, the sequence $a(n) = t(n) (1-(-1)^n)/2$ is clearly $2$-automatic, and it is not hard to verify that it is totally balanced. It is also easy to notice that $a(2n) = t(n)$ while $a(2n+1) = 0$. A much closer relation exists between the sequences $t(qn+r)$ for fixed $q,r$ with $0 \leq r < q$. Hence, in proving Theorem \ref{thm:B} for the sequence $a(n)$ it will be more convenient to work instead with $a(2n)$ and $a(2n+1)$ independently. The goal of this section is to obtain a similar decomposition for a general totally balanced automatic sequence.

For a $k$-automaton $\cA = (S,s_0,\delta,\tau)$ we may consider the frequencies 

$$ \pi(s,s') = \lim_{L \to \infty} \frac{ \abs{ \set{ u \in \Sigma_k^L}{ \delta(s, u ) = s'}} }{k^L}, $$
which may or may not exist. More generally, for $q \in \NN$, $r \in \NN_0$ with $0 \leq r < q$, let

$$
	\pi(s,s'; r(q)) = \lim_{L \to \infty} \frac{ \abs{ \set{ u \in \Sigma_k^L}{ \delta(s, u ) = s',\ [u]_k \equiv r \bmod{q} }} }{ \abs{ \set{ u \in \Sigma_k^L}{ [u]_k \equiv r \bmod{q} }} }.
$$
If $s = s_0$, we simply write $\pi(s')$ or $\pi(s',r(q))$. Note that these quantities depend only on $(S,s_0,\delta)$, which we will call a $k$-automaton without output.

For lack of a better phrase, we shall say that an automaton without output $\cA = (S,s_0,\delta)$ is \emph{strongly aperiodic} if for each $q \in \NN$, for each $r \in \NN_0$ with $r < q$ and $s \in S$, the frequencies $\pi(s, r(q))$ exist and are equal to $\pi(s)$. (To motivate this piece of nomenclature, note, in particular, that a non-constant automatic sequence produced by a strongly aperiodic automaton does not become periodic even after restricting to an arithmetic progression; hence strong aperiodicity can be seen as a far reaching strengthening of the property of not being periodic. This property can also be viewed as an analogue of the notion of aperiodicity for graphs.) We will always assume that all states $s \in S$ are reachable from the initial state $s_0$. Under this assumption, if $\cA$ is aperiodic then it is also strongly connected. Note that for a strongly connected automaton, changing the initial state does not alter strong aperiodicity; we will say that a strongly connected automaton $(S,\delta)$ without a distinguished initial state is strongly aperiodic if $(S,s_0,\delta)$ is aperiodic for some (all) $s_0 \in S$.

It is relevant to the study of aperiodic behaviour of an automaton $\cA$ to know what the possible values associated with the cycles are. Let $\cA = (S,\delta)$ be an automaton without output nor initial state. For $s \in S$, we will consider the sets

$$
	D_{\cA,s} := \set{ [u]_k - [v]_k }{ \delta(s,u) = \delta(s,v) = s,\ \abs{u} = \abs{v} },
$$
and put $d_{\cA,s} = \gcd(D_{\cA,s})$.

\begin{lemma} Fix $k \geq 2$.
	Let $\cA = (S,\delta)$ be a strongly connected $k$-automaton without output and initial state. Then $d_{\cA,s}$ does not depend on $s$ and is coprime to $k$.
\end{lemma}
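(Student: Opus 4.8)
The claim has two parts: first, that $d_{\cA,s}$ is independent of $s$; second, that it is coprime to $k$. I would begin with the independence. Fix $s, s' \in S$. By strong connectedness there are words $w, w'$ with $\delta(s,w) = s'$ and $\delta(s',w') = s$; we may take $|w| = |w'|$ by padding with leading zeros (using that the automaton ignores leading $0$'s, or simply padding inside — actually one wants the same length, so pad the shorter with leading zeros so that $\delta$ is unaffected, or better: iterate the round trip $w'w$ from $s$ to make it a loop). The key mechanism: given $[u]_k - [v]_k \in D_{\cA,s}$ with $|u|=|v|=L$, consider the words $w' u w$ and $w' v w$ (reading right-to-left in the paper's convention $\delta(s,uv) = \delta(\delta(s,v),u)$), which are both loops at $s'$ of equal length, and whose associated difference is $k^{|w|}([u]_k - [v]_k)$ after accounting for the shift by the low-order block — more precisely $[w' u w]_k - [w' v w]_k = k^{|w|+L}\cdot 0 + k^{|w|}([u]_k - [v]_k) + 0$, i.e. a power of $k$ times the original difference. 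Hence $k^{|w|} D_{\cA,s} \subseteq D_{\cA,s'}$, so $d_{\cA,s'} \mid k^{|w|} d_{\cA,s}$. Symmetrically $d_{\cA,s} \mid k^{|w'|} d_{\cA,s'}$. Combined with the coprimality to $k$ (proved next), this forces $d_{\cA,s} = d_{\cA,s'}$.

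\textbf{Coprimality to $k$.} This is the part I expect to be the main obstacle, since it is where the specific structure of base-$k$ representations enters rather than just graph theory. The idea: fix any $s$ and any loop $u$ at $s$ with $|u| = L \geq 1$, so $\delta(s,u) = s$. Then $uu$ (concatenation, length $2L$) is also a loop at $s$, and $[uu]_k = [u]_k(1 + k^L)$. Also $u0^L$ and $0^L u$ — wait, leading zeros: $\delta(s, u 0^L)$, using $\delta(s,u0^L) = \delta(\delta(s,0^L), u)$; since the automaton ignores leading $0$'s we only know $\tau$ is unchanged, not that $\delta(s,0) = s$. So I cannot freely insert zeros as loops. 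Instead: from $u$ a loop of length $L$ at $s$, the words $u$ and $u u$ extended to common length — take $uu$ (length $2L$) and $(0^L u)$? No. Better: take the two length-$2L$ loops $uu$ and $u'u$ where $u'$ is another loop of length $L$ at $s$; then the difference is $k^L([u']_k - [u]_k) \in D_{\cA,s}$ if $u' = u$ gives $0$. Hmm. Let me use a cleaner device: if $u$ is a loop of length $L$ at $s$, then for the word $v := uu$ of length $2L$ and $w := u$ padded... The standard trick: $[u u]_k - k^L [u]_k = [u]_k$ where $uu$ has length $2L$ and $u$ "padded to length $2L$ on the low end" is $u$ itself shifted — actually $k^L[u]_k = [u\,0^L]_k$ but $u0^L$ need not be a loop. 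So instead compare $uu$ with $uu'$ where $u'$ is a loop at $s$ of length $L$: difference $= [u]_k(1+k^L) \cdot$... no. I think the right comparison is: $uu$ (length $2L$, loop at $s$) versus $u$ viewed inside a length-$2L$ loop — e.g. if $z$ is a loop at $s$ of length $L$, then $zu$ and $uu$ are both length-$2L$ loops at $s$, with $[zu]_k - [uu]_k = k^L([z]_k - [u]_k)$. Taking $z$ with $[z]_k \not\equiv [u]_k$ (possible when the automaton is not "rigid") gives elements divisible by a high power of $k$, which together with the independence-of-$s$ scaling argument pins things down — but to genuinely get coprimality I would argue: $d := d_{\cA,s}$ divides every element of $D_{\cA,s}$; since $k^L D_{\cA,s} \subseteq D_{\cA,s}$ for suitable $L$ (from loops) we get... this still doesn't immediately give $\gcd(d,k)=1$.

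\textbf{The genuine argument for coprimality.} Here is what should actually work. Let $g = \gcd(d_{\cA,s}, k^\infty)$, i.e. the largest divisor of $d_{\cA,s}$ all of whose prime factors divide $k$; I want $g = 1$. Suppose $p \mid g$ for a prime $p \mid k$. Strong connectedness gives, for each state, loops at $s$ of all sufficiently large lengths (standard: if there's a loop of length $a$ and one of length $b$ through $s$, there's one of every length $\geq$ some bound in the numerical semigroup generated — more carefully using strong connectedness one gets loops of all lengths $L \geq L_0$). Pick $L$ large with two loops $u, v$ at $s$ of length $L$ such that the digit strings of $u$ and $v$ differ only in the top position, or more to the point: among loops of length $L$, consider varying a single coordinate. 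Since for large $L$ there are many loops, and in particular there exist loops $u,v$ of length $L$ agreeing except possibly in positions forcing $[u]_k - [v]_k$ to have a controlled $p$-adic valuation — this requires knowing the set of loop-values mod $p^{v_p(d)+1}$ is not a single residue. The cleanest route: show $D_{\cA,s}$ contains an element not divisible by $p$. Take a loop $u$ of length $L$ at $s$, $L \geq 1$. Consider $u$ and the loop $u'$ obtained by also using a loop — specifically, for $L_0$ large, both $u0^{L}... $ Ugh. I will instead invoke the following: since $\cA$ is strongly connected, the "return word" structure shows that for each digit position $i < L$ (for $L$ large) and each pair of digits, one can find two loops of equal length differing essentially in position $i$; taking $i = 0$ gives two loops $u,v$ with $[u]_k - [v]_k \equiv $ (nonzero digit difference) $\pmod{k}$, hence not divisible by $p$, so $p \nmid d_{\cA,s}$. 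This digit-swapping-at-the-bottom is the technical heart, relying on: from $s$ there is a word $x$ of some length $\ell$ leading to some state $t$, and from $t$ a loop back avoiding constraints, so that prepending different single digits $a \ne b$ (at the low-order end) to a common high part yields two length-$(\ell+1+\text{loop})$ loops whose values differ by $(a-b) + k\cdot(\text{common stuff})$, which is $\not\equiv 0 \pmod p$ since $0 < |a-b| < k$ and $p \mid k$ but $p \nmid (a-b)$ provided we choose $a - b$ coprime to $p$ (e.g. $a-b = 1$). That such loops exist with arbitrary prescribed low-order digit uses strong connectedness: fix any state $t$ reachable from $s$ and from which $s$ is reachable; let $y$ with $\delta(s,y) = t$, and $z$ with $\delta(t,z) = s$; then for a digit $a$, the word $z\,(a)\,y'$ — one needs $\delta(s, \cdot)$; set it up as $\delta(s, z\,a\,y) $ where actually we need the whole thing to loop at $s$: take $w_a := $ the word (low to high) $y, a, z$ of length $|y|+1+|z|$, giving $\delta(s, w_a) = \delta(\delta(\delta(s,z),a),y)$ — hmm the middle state after reading $a$ from $\delta(s,z)$ may vary, so instead fix a specific state and route through it. I'll present it as: choose $t$ with a loop of length $\geq 1$ at $t$ and $s \leftrightarrow t$; standard padding makes two loops at $s$ of equal length whose values differ by exactly $1 \pmod k$. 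Then $\gcd(D_{\cA,s}) \mid 1 \cdot (\text{unit mod } p)$ wait — we get an element $\equiv 1 \pmod k$ hence $\not\equiv 0 \pmod p$, so $d_{\cA,s}$ is coprime to $p$; as $p$ was an arbitrary prime divisor of $k$, $\gcd(d_{\cA,s},k) = 1$. Then the scaling relations $k^{|w|} d_{\cA,s'} \mid ... $ become $d_{\cA,s'} \mid d_{\cA,s}$ (cancelling the coprime power of $k$) and symmetrically, giving equality. The main obstacle, as flagged, is rigorously producing the two equal-length loops differing by a unit mod $k$ at the low-order digit — everything else is bookkeeping with the semigroup of loop lengths and gcd manipulations.
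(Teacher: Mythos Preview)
Your independence argument is essentially the paper's: conjugate a loop at $s$ by connecting words to get a loop at $s'$, picking up a factor $k^{|y|}$ in the difference, and then cancel the power of $k$ once coprimality is known. Fine.

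For coprimality, you correctly identify the target --- two equal-length loops at $s$ whose low-order digits differ by a unit mod $k$ --- but you do not actually construct them, and your attempts stall precisely because you try to vary a digit \emph{in the middle} of a common word, whereupon the state after that digit depends on the digit and the rest of the path breaks. The paper sidesteps this entirely. Since the low-order digit is the \emph{first} symbol read, to get a loop at $s$ ending in $j$ you simply take any word from $\delta(s,j)$ back to $s$ (which exists by strong connectedness) and append $j$ at the bottom. Do this for $j=0$ and $j=1$ to get loops $u$ and $v$; they need not have the same length, but $u^{|v|}$ and $v^{|u|}$ do, are both loops at $s$, and still end in $0$ and $1$ respectively. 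Their difference of values is $\equiv -1 \pmod{k}$, hence coprime to $k$.

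So the missing idea is twofold: (i) place the varying digit at the low-order end rather than in the middle, allowing the return path to be chosen separately for each digit; and (ii) equalize lengths by taking powers rather than by trying to match the two constructions step-by-step. With that, your sketch becomes a complete proof identical to the paper's.
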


We will denote the common value of $d_{\cA,s}$ by $d_\cA$.

\begin{proof}
	To verify that $k$ is coprime to $d_{s,\cA}$, take two $u,v \in \Sigma_k^*$ such that $u$ ends with $0$, $v$ ends with $1$, and $\delta(s,u) = \delta(s,v) = s$. Such $u,v$ exist because $\cA$ is strongly connected. Replacing $u,v$ with $u^{\abs{v}}$ and $v^{\abs{u}}$, we may assume that $\abs{u} = \abs{v}$. Hence, $[u]_k - [v]_k \in D_{\cA,s}$ and is coprime to $k$, and $d_{\cA,s}$ is coprime to $k$.
	
	To verify that $d_{\cA,s}$ is independent of $s$, let us fix first some $s,s' \in S$. Pick $x,y \in \Sigma_k^*$ such that $\delta(s,x) = s'$ and $\delta(s',y) = s$. Then, for any $u,v$ as in definition of $D_{\cA,s}$ we have that 
	
	$$[ xuy]_k - [xvy]_k = k^{\abs{y}}\bra{[ u]_k - [v]_k} \in D_{\cA,s'}.$$
Hence, there exists $m = m(s,s')$ such that $d_{\cA,s'} \mid k^m d_{\cA,s}$. Since $d_{\cA,s'}$ is coprime to $k$, $d_{\cA,s'} \mid d_{\cA,s}$. By symmetry, $d_{\cA,s'} = d_{\cA,s}$.
\end{proof}

\begin{proposition}\label{prop:very-primitive=>strongly-aperiodic} Fix $k \geq 2$. 
Let $\cA = (S,\delta)$ be a strongly connected automaton (without output and initial state). Suppose that $d_{\cA} = 1$ and that there exists a state $s \in S$ such that $\delta(s,0) = s$. Then $\cA$ is strongly aperiodic. 
\end{proposition}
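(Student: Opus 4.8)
The plan is to prove strong aperiodicity by establishing, for every fixed $q \in \NN$, that the counting sequence $N_{s'}(L, r) := \abs{\set{u \in \Sigma_k^L}{\delta(s_0,u) = s',\ [u]_k \equiv r \pmod q}}$ is asymptotically independent of $r$. The natural framework is to study the product automaton that tracks simultaneously the state of $\cA$ and the residue of $[u]_k$ modulo $q$. Concretely, I would work on the state set $\hat S := S \times \ZZ/q\ZZ$ with transition $\hat\delta\bigl((s,j), w\bigr) = \bigl(\delta(s,w),\ kj + w \pmod q\bigr)$, reading digits from most significant to least significant as in the paper's convention; then reading $u \in \Sigma_k^L$ starting from $(s_0, 0)$ lands in $(\delta(s_0,u), [u]_k \bmod q)$. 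The transition matrix $\hat M$ of this product automaton is a nonnegative integer matrix with constant row sums $k$, so $\frac{1}{k}\hat M$ is (row-)stochastic, and the whole question reduces to the Perron--Frobenius behaviour of $\frac1k\hat M^L$ as $L \to \infty$.

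The key steps, in order, are: \textbf{(1)} Show the relevant part of $\hat M$ is irreducible. Since $\cA$ is strongly connected, given $(s,j)$ and $(s',j')$ one can first find a word $x$ with $\delta(s,x)=s'$; this moves the residue to some value $j''$, and it remains to correct $j''$ to $j'$ while staying at $s'$. This is exactly where the hypotheses $d_\cA = 1$ and the existence of a self-loop $\delta(s_\ast, 0) = s_\ast$ enter: using the previous lemma, the differences $[u]_k - [v]_k$ over return loops at a state generate (together with the freedom to prepend digits) all of $\ZZ$ modulo $q$ after clearing powers of $k$ (which is legitimate since $\gcd(k,q)$ can be handled by noting $d_\cA$ is coprime to $k$, and more carefully by passing through the loop at $s_\ast$); hence one can adjust the residue arbitrarily while returning to $s'$, giving strong connectivity of $\hat S$ (or at least of the part reachable from $(s_0,0)$, which is all we need). \textbf{(2)} Show aperiodicity (in the Perron--Frobenius sense, i.e.\ period $1$) of this irreducible matrix: the self-loop $\delta(s_\ast,0)=s_\ast$ gives, in the product automaton, a loop at $(s_\ast, 0)$ of length $1$, so the gcd of cycle lengths through $(s_\ast,0)$ is $1$, and by irreducibility the whole matrix is aperiodic. \textbf{(3)} Apply Perron--Frobenius: $\frac1k \hat M$ has a simple Perron eigenvalue $1$ with strictly positive left and right eigenvectors, and all other eigenvalues have modulus $<1$, so $(\frac1k\hat M)^L$ converges, with the limit a rank-one matrix $\mathbf{1}\,\pi^{\top}$ where $\pi$ is the stationary distribution. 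By the constant-row-sum and symmetry structure, $\pi$ assigns to $(s', j')$ the value $\pi(s')/q$ (the residue classes are equidistributed because the map $j \mapsto kj + w$ cycles through all of $\ZZ/q\ZZ$ uniformly). \textbf{(4)} Translate back: $N_{s'}(L,r)/k^L \to \pi(s',r(q))$-numerator-style quantity $= \pi(s')/q$ independently of $r$, and since $\abs{\set{u \in \Sigma_k^L}{[u]_k \equiv r \pmod q}} / k^L \to 1/q$ as well, the ratio $\pi(s, r(q))$ exists and equals $\pi(s)$, which is the definition of strong aperiodicity.

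The main obstacle I expect is \textbf{Step (1)}, establishing irreducibility of the reachable part of the product automaton, and in particular dealing cleanly with the interaction between $q$ and $k$ when $\gcd(k,q) > 1$. The set $D_{\cA,s}$ of loop-differences has gcd $d_\cA = 1$, but a difference $[u]_k - [v]_k$ with $\abs u = \abs v$ is generated only using loops of equal length, and prepending a word multiplies residues by a power of $k$ rather than adding freely; so one must argue that the additive subgroup of $\ZZ/q\ZZ$ generated by $\{[u]_k - [v]_k : \delta(s,u)=\delta(s,v)=s,\ \abs u = \abs v\}$ together with the reachability relations is all of $\ZZ/q\ZZ$. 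The cleanest route is: pick the loop word $0$ at $s_\ast$ (so we can pad lengths freely without changing residues near $s_\ast$), route everything through $s_\ast$, and observe that the available return-loop differences at $s_\ast$ are exactly $D_{\cA,s_\ast}$, whose gcd is $1$, hence they generate $\ZZ$ and a fortiori $\ZZ/q\ZZ$ — the zero-loop absorbs any length bookkeeping. This reduces the whole matter to the already-proven lemma, and the rest of the argument is standard Perron--Frobenius bookkeeping.
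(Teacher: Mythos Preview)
Your overall strategy is the same as the paper's: build a product chain on $S \times \ZZ/q\ZZ$, verify irreducibility and aperiodicity, and invoke Perron--Frobenius. Where you diverge from the paper, however, is precisely at the point you flag as the main obstacle, and you do not actually overcome it.

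There are two concrete gaps. First, the single-digit transition $(s,j)\mapsto(\delta(s,w),\,kj+w)$ is \emph{affine} in $j$, not additive, so the set of residues reachable from $(s_\ast,0)$ by loops at $s_\ast$ has no reason to be a subgroup of $\ZZ/q\ZZ$: concatenating loops $u,u'$ gives residue $k^{|u|}[u']_k+[u]_k$, not $[u]_k+[u']_k$. Your sentence ``the zero-loop absorbs any length bookkeeping'' does not repair this, because appending $0$'s to a loop multiplies the \emph{other} factor by a power of $k$; it does not turn the update into addition. Second, your claim in Step~(3) that the stationary distribution is $\pi(s')/q$ (``the map $j\mapsto kj+w$ cycles through all of $\ZZ/q\ZZ$ uniformly'') is unjustified when $\gcd(k,q)>1$: for fixed $s,s',j'$, the number of $(j,w)$ with $\delta(s,w)=s'$ and $kj+w\equiv j'$ depends on the distribution of $w\bmod\gcd(k,q)$ among the edges $s\to s'$, and there is no symmetry forcing this to be uniform. (There is also a minor convention slip: the paper reads \emph{least} significant digit first, so your product chain does not land in $(\delta(s_0,u),[u]_k)$ with the paper's $\delta$.)

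The paper resolves all of this in one move that you are missing: it first reduces to $q$ coprime to $k$, and in fact to $q=k^l-1$, by writing a general modulus as $k^m(k^l-1)/\text{(divisor)}$ and peeling off the $k^m$ part. Once $q=k^l-1$, one takes the chain in blocks of $l$ digits; then $k^l\equiv 1\pmod q$, so the residue update becomes purely additive, $(s,n)\mapsto(\delta(s,u),\,n+[u]_k)$. Now the reachable residues at $s_\ast$ genuinely form a subgroup, the hypothesis $d_\cA=1$ forces it to be all of $\ZZ/q\ZZ$, and the stationary distribution is automatically uniform across residues. Your plan becomes correct once you insert this reduction; without it, Steps~(1) and~(3) are assertions rather than arguments.
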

\begin{proof}

	We wish to show that $\pi\bra{s,s'; r(q)} = \pi(s')$ for any $r,q$ with $0 \leq r < q$ and any $s,s' \in S$, and both quantities exist.

	Note that if the claim holds for a given value of $q$, then it also holds for any $q'$ which divides $q$. Hence, we may assume without loss of generality that $q = q_0 q_1$ where $q_0 = k^m$ and $q_1 = k^{l}-1$ for some $l,m \in \NN$. Note further that for any $r$ with $0 \leq r < q$ there exist some $w \in \Sigma_k^m$ and $r_1$ with $0 \leq r_1 < q_1$ such that for all $s,s' \in S$ we have
	
	$$
		\pi\bra{ s,s' ; r(q)} = \pi\bra{ \delta(s,w), s'; r_1(q_1)}
	$$
(in the sense that if one of the quantities is defined then so is the other, and if so they are equal). Hence, we may without loss of generality assume that $q_0 = 1$, whence $q = k^l -1$. Fix the value of $q$ from now on.
	
	We will consider the random walk $\mathcal{W}$ on the vertex set $S \times \ZZ/q\ZZ$, where at each step we select $u \in \Sigma_k^l$ randomly and pass from $(s,n)$ to $(\delta(s,u),n+[u]_k)$. Note that a sequence $u = u_{t-1} u_{t-2} \dots u_0 \in \bra{\Sigma_k^{l}}^t \simeq \Sigma_k^{tl}$ gives rise to a path from $(s,n)$ to $(s',n+r)$ if and only if $\delta(s,u) = s'$ and $[u]_{k} \equiv r \bmod{q}$. 
	
	In order to apply the Perron--Frobenius theorem to $\mathcal{W}$, we need to verify that it is aperiodic (in the sense that the greatest common divisor of all cycles is $1$) and strongly connected (in the sense that there exists a path from any vertex $(s,r)$ to any other vertex $(s',r')$). The former condition is clear because for any $s \in S$ such that $\delta(s,0) = s$, we have a loop in $\mathcal{W}$ at $(s,0)$ corresponding to taking $u = 0$. 
	
	We now proceed to prove strong connectedness. Note that a path from a vertex $(s,r)$ to a vertex $(s',r')$ exists if and only if there exists a path from the vertex $(s,0)$ to $(s',r'-r)$. Hence, for a fixed choice of $s \in S$, the set $I_s$ of $r \in \ZZ/q\ZZ$ such that there exists a path from $(s,0)$ to $(s,r)$ is a subgroup of $\ZZ/q\ZZ$.
	
	We will show that actually $I_s = \ZZ/q\ZZ$. Pick any $n \in D_{\cA,s}$ and let $u,v \in \Sigma_k^*$ be two words with $\delta(s,u) = \delta(s,v) = s$ and $\abs{u} = \abs{v}$ with $n = [u]_k - [v]_k$. Let $w_0 = u^{l}$ and $w_1 = u^{l-1}v$. By construction, there exists a path from $(s,0)$ to $(s,[w_0]_k)$ and $(s,[w_1]_k)$, whence $n = [u]_k - [v]_k = [w_0]_k - [w_1]_k \in I_s$. If follows that $1 = d_{\cA} \in I_s$ and $I_s = \ZZ/q\ZZ$, as claimed.

	To prove that $\mathcal{W}$ is strongly connected, it will now suffice to verify that for each $s,s' \in S$, there exists a path from $(s,0)$ to $(s,r')$ for some choice of $r'$. This is equivalent to the statement that for any $s,s' \in S$, there exists $u \in \Sigma_k^*$ with $l \mid \abs{u}$ and $\delta(s,u) = s'$. If either $\delta(s,0) = s$ or $\delta(s',0) = s'$, then it is enough to pick any $v \in \Sigma_k^*$ with $\delta(s,v) = s'$ and set $u =  \abs{v}0^{(l-1)\abs{v}}$ (or $u = 0^{(l-1)\abs{v}} \abs{v}$). Otherwise, pick $s''$ with $\delta(s'',0) = s''$ and note that there exists a path from $(s,0)$ to $(s'', r'')$ and from $(s'',r'')$ to $(s',r')$ for some $r',r'' \in \ZZ/q\ZZ$.

	It now follows from Perron--Frobenius that there exist unique limiting probabilities for the random walk $\mathcal{W}$ and they are independent of the starting point. Hence, the limit
	
	\begin{align*}
		\pi'(s,s';r(q)) 
		&= \lim_{K \to \infty} \PP \bra{\mathcal{W} \text{ goes from $(s,q-r)$ to $(s',0)$ in $K$ steps} }
		\\ & = \lim_{K \to \infty} \frac{1}{k^{Kl}} \abs{ \set{u \in \Sigma_k^{Kl} }{ \delta(s,u) = s', [u]_k \equiv r \bmod{q} }}
	\end{align*}
	exists for any $r$ with $0\leq r < q$ and $s,s' \in S$, and does not depend on $r$ and $s$. Denote the common value of $\pi'(s,s',r(q))$ by $\pi'(s')$.
	
	For any $j$ with $0 \leq j < l$ we may similarly compute that
	
	\begin{align*}
	 &\phantom{=} \lim_{K \to \infty} \frac{1}{k^{Kl+j}} \abs{ \set{u \in \Sigma_k^{Kl+j} }{ \delta(s,u) = s', [u]_k \equiv r \bmod{q} }}
	 \\ &= \EE_{v \in \Sigma_k^j} \pi\bra{ \delta(s,v), s', k^{l-j}(r-[v]_k) (q)} = \pi'(s').
	\end{align*}
	
	If follows that for any $s,s'$ and $r$, the limit $\pi(s,s',r(q))$ exists and equals $\pi'(s')$. In particular, $\pi(s)$ exists and $\pi(s) = \pi'(s) = \pi(s,s',r(q))$, and thus $\cA$ is strongly aperiodic.
\end{proof}

\begin{proposition}\label{prop:aperiodic=>strongly-aperiodic}
	Fix $k \geq 2$. Let $a(n)$ be a $k$-automatic sequence which is produced by a $k$-automaton $\cA = (S,s_0,\delta,\tau)$ which is strongly connected and ignores leading $0$'s.
	
	Then there exist $k'$ which is a power of $k$ and $q \in \NN$ such that for any $r$ with $0 \leq r < q$, the sequence $a_r(n) = a(qn + r)$ is produced by some $k'$-automaton $\cA_r$ which ignores leading $0$'s and has the property that any of its terminal components is strongly aperiodic. Moreover, $k'$ and $q$ depend only on $(S,\delta)$.
\end{proposition}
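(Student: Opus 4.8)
The plan is to reduce the statement to Proposition~\ref{prop:very-primitive=>strongly-aperiodic}, which guarantees strong aperiodicity of a strongly connected automaton $\cB$ (without output or initial state) as soon as $d_\cB=1$ and $\cB$ has a state fixed by the $0$-transition. A terminal (i.e.\ sink) strongly connected component of any automaton is automatically strongly connected and closed under all transitions, so it suffices to choose $q$, $k'$, and for each $r$ an automaton $\cA_r$ producing $a_r(n):=a(qn+r)$ in base $k'$ and ignoring leading $0$'s, in such a way that each terminal component $\cB$ of each $\cA_r$ (i) contains a state fixed by $\delta_r(\cdot,0)$ and (ii) has $d_\cB=1$. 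For $\cA_r$ I would simply take the minimal $k'$-DFAO of $a_r$: its states are the sequences $c\mapsto a_r(k'^ic+m')=a(qk'^ic+M)$ with $M=qm'+r$, and the output of such a state is its value at $c=0$, so $\tau(\delta_r(\phi,0))=\tau(\phi)$ always, i.e.\ the minimal automaton ignores leading $0$'s; moreover its states and transitions depend only on $a_r$, whereas $q$ and $k'$ will be defined purely from $(S,\delta)$, as required.

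Choose $q:=d_\cA$ (well defined since $\cA$ is strongly connected, and coprime to $k$ by the Lemma preceding Proposition~\ref{prop:very-primitive=>strongly-aperiodic}), and let $l$ be any positive integer divisible both by a fixed integer $P=P(|S|)$ (e.g.\ $P=\operatorname{lcm}(1,\dots,|S|)$) large enough that $(\delta(\cdot,0))^{P}$ acts as the identity on $\bigcap_{j\ge0}\operatorname{im}(\delta(\cdot,0))^j$, and by the multiplicative order of $k$ modulo $d_\cA$; put $k':=k^l$, so that $k'\equiv1\pmod q$. All of $q$, $l$, $k'$ depend only on $(S,\delta)$. Step (i) is then straightforward: the $0$-successors of a state $\phi$, $\phi(c)=a(qk'^ic+M)$, of $\cA_r$ are $\phi_j(c)=a(qk'^{i+j}c+M)$; for fixed $c\ge1$ and large $j$, the base-$k$ expansion of $qk'^{i+j}c+M$ consists of the digits of $M$, followed by a block of zeros whose length grows in steps of $l$, followed by the digits of $qc$, and when $\cA$ reads this the zero block lands in $\bigcap_j\operatorname{im}(\delta(\cdot,0))^j$, so (since $P\mid l$) increasing $j$ by one does not change the resulting state. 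Hence $\phi_j$ is eventually constant in $j$, the $0$-chain of $\phi$ reaches a fixed point of $\delta_r(\cdot,0)$, and since a terminal component is $\delta_r$-closed it contains such a fixed point.

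For step (ii) — the crux — I would run an argument parallel to the proof of Proposition~\ref{prop:very-primitive=>strongly-aperiodic}. The meaning of $d_\cA=d$ is precisely that (using $k^l\equiv1\pmod d$) the residue $[u]_k\bmod d$ of a path $u$ in $\cA$ whose length is a multiple of $l$ depends only on its endpoints, say $[u]_k\equiv\chi(s')-\chi(s)\pmod d$ for a function $\chi\colon S\to\ZZ/d\ZZ$ constant on the image of $(\delta(\cdot,0))^{P}$; this $\chi$ is the ``residual periodic part'' detected by $d>1$. Fixing a terminal component $\cB$ of $\cA_r$ and a $0$-loop state $\phi_*\in\cB$ with presentation $\phi_*(c)=a(qk'^{i_*}c+M_*)$ for $i_*$ as large as we like, a length-$s'$ word $u$ with $v:=[u]_{k'}$ is a loop at $\phi_*$ exactly when $a(qk'^{i_*+s'}c+(qk'^{i_*}v+M_*))=a(qk'^{i_*}c+M_*)$ for all $c$. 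Reading both sides in $\cA$ for $c$ large — so that the digits of $qc$ do not interact with the bounded block of low digits — and using $q=d_\cA$ so that the shift $qk'^{i_*}v$ pairs trivially with $\chi$, one sees that $v$ may be taken to run over a set of integers with $\gcd$ equal to $1$; since $0^{s'}$ is itself a loop at $\phi_*$, this $\gcd$ is exactly $d_\cB$, so $d_\cB=1$. Combining (i) and (ii), Proposition~\ref{prop:very-primitive=>strongly-aperiodic} applies to every terminal component of every $\cA_r$, which finishes the proof. The delicate point I expect is the ``reading in $\cA$'' step of (ii): one must control the carries between the digits encoding $c$ and the bounded number of digits encoding the offset $qk'^{i_*}v+M_*$, and verify that setting $q=d_\cA$ (with $l$ a multiple of $\operatorname{ord}_{d_\cA}(k)$) genuinely annihilates the $\chi$-obstruction in one step rather than merely shrinking it, so that no iteration of the passage to arithmetic progressions is needed.
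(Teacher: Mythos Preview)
Your overall architecture matches the paper's: set $q=d_\cA$, pass to a suitable power $k'=k^l$, build automata $\cA_r$ producing $a_r$, and apply Proposition~\ref{prop:very-primitive=>strongly-aperiodic} to each terminal component by verifying the two hypotheses (existence of a $0$-fixed state and $d_\cB=1$). Your step~(i) is fine. The substantive divergence is your choice of $\cA_r$: you take the \emph{minimal} $k'$-automaton of $a_r$, whereas the paper writes down an explicit product automaton on the state set $S\times[q]$, with transition
\[
\delta'\bigl((s,m),j\bigr)=\Bigl(\delta(s,\,qj+m\bmod k),\ \lfloor (qj+m)/k\rfloor\Bigr),
\]
and verifies directly that $\delta'\bigl((s,r),0(n)_k\bigr)=\bigl(\delta(s,0(qn+r)_k),0\bigr)$. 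This makes step~(ii) a two-line computation: a loop $u$ at a $0$-fixed state $s$ in $\cA$ has $q\mid[u]_k$, and the word $v$ with $q[v]_k=[u]_k$ is then a loop at $(s,0)$ in $\cA_r$, so $D_{\cB,(s,0)}\supseteq q^{-1}D_{\cA,s}$ and $d_\cB=1$.

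By contrast, in your minimal automaton a loop at $\phi_*$ is the condition
\[
a\bigl(qk'^{i_*+s'}c+qk'^{i_*}v+M_*\bigr)=a\bigl(qk'^{i_*}c+M_*\bigr)\quad\text{for \emph{all} }c\ge0,
\]
and your argument only analyses this for $c$ large enough that the digit blocks separate. Two things go wrong. First, you never treat $c=0$ (or small $c$), and there is no reason the large-$c$ condition forces the small-$c$ one in the minimal automaton. Second, even for $c\ge1$, when $q>1$ the base-$k$ digits of $q(k'^{s'}c+v)$ are \emph{not} the concatenation of the digits of $qc$ and a fixed block depending on $v$: multiplication by $q$ creates carries across the boundary, exactly the issue you flag at the end. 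Your $\chi$-heuristic (``the shift $qk'^{i_*}v$ pairs trivially with $\chi$'') controls only residues mod $d_\cA$, not the actual state reached in $\cA$, so it does not produce concrete loops. One can rescue your choice of $\cA_r$ by observing that the minimal automaton is a quotient of the $S\times[q]$ automaton, whence $d_\cB$ divides the paper's $d_\cB=1$; but that amounts to importing the paper's construction. As written, step~(ii) is a genuine gap.
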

\begin{proof}

	We begin with a reduction to a case when $\cA$ has some additional favourable properties.
	
	Let $k' = k^l$ be a power of $k$. Then $a(n)$ is a $k'$-automatic sequence, and it is produced by the automaton $\cA' = (S',s_0',\delta',\tau')$ constructed as follows. The set of states $S'$ will be a subset of $S$, defined thereafter, and $s_0' = s_0$, $\tau' = \tau|_{S}$. The transition function is defined for $u \in \Sigma_k^l$ by $\delta'(s,[u]_k) = \delta(s,u)$. Finally, $S'$ is the set of states $s \in S$ reachable by $\delta'$ from $s_0$, or equivalently the set of states which are reachable in $\cA$ from $s_0$ by a path of length divisible by $l$. Note that under these definitions, for any $n \in \NN$, $\delta'(s_0, (n)_{k'}) = \delta(s_0, 0^j (n)_k)$ (with natural identifications, where $j$ is chosen so that the length of $0^j (n)_k$ is divisible by $l$), whence the condition of ignoring the leading $0$'s ensures that $\cA$ and $\cA'$ produce the same sequence. Moreover, $\cA'$ ignores the leading $0$'s and is strongly connected. Thus, we may freely replace $k$ with $k' = k^l$ and $\cA$ with $\cA'$. We will perform this replacement several times; to avoid obfuscating the notation we will reuse the same symbols $k$ and $\cA$. 
	
	Note that the set of states $S$ may become smaller as we change the base $k$. Replacing $k$ by its power, we may assume that $S$ has stabilised, i.e.\ that subsequent replacements will not decrease the set of states further. We may also assume that the action of $0$ is idempotent, in the sense that $\delta(s,00) = \delta(s,0)$ for any $s$ (this property is preserved under the change of base). Let $s_1 = \delta(s_0,0) \in S$ (change of base will not affect $s_1$). Because any two paths from $s_1$ to $s_1$ can be padded by an arbitrary number of $0$'s, the set $D_{\cA,s_1}$ does not change when the base is changed, and any subsequent change of base does not change $d_{\cA}$. We put $q = d_{\cA}$. We may assume that $k$ is much larger than $q$, and in particular that $\delta(s,0j) = \delta(s,j)$ for any $j$ with $0 \leq j < q$ and any $s \in S$. 
	
	We next construct the automata $\cA_r = (R, s'_r, \delta', \tau')$ which produce the sequences $a_r(n) = a(qn+r)$. They are defined as follows:
	
	\begin{align*} 
	R &= S \times [q], 
	 &s_r' &= 
	 (s_0,r),
	\\ \delta'\bra{ (s,m), j} &= \bra{ \delta(s, qj+m \bmod k), \floor{\frac{qj+m}{k}}},
	& \tau'(s,m) &= \tau( \delta(s,m)).
	\end{align*}
	(In the last line, note that $[q]$ may be considered as a subset of $\Sigma_k$.)

	It is routine to check that $R$ is preserved under $\delta(\cdot, j)$ for $j \in \Sigma_k$, that $\cA_r$ ignores the leading $0$'s, and that $\delta'(s',00) = \delta'(s',0)$ for all $s' \in R$. Moreover, an inductive argument shows that for any $n \in \NN$, we have
	
	$$\delta'(s'_r, (n)_k) = \bra{ \delta\bra{s_0, (q n + r )_k^t }, \floor{\frac{qn+r}{k^t}} },\quad t = \floor{\log_k n}+1.$$ 
	(Recall that $(n)_k^t$ denotes the terminal $t$ digits of $(n)_k$, padded by $0$'s if necessary.) 
	
	Hence, we may derive
	
	$$\delta'\bra{s'_r, 0(n)_k} = \bra{\delta\bra{s_0,0(qn+r)_k},0}$$
	 and $\cA_r$ indeed produces the sequence $a_r$. Slightly more generally, we have
	 
	 $$\delta'\bra{(s,r), 0(n)_k} = \bra{\delta\bra{s,0(qn+r)_k},0}.$$
	Note that there is no guarantee that $\cA_r$ is strongly connected for any choice of $r$, or even that all states $s' \in R$ are reachable from $s'_r$. We will show that every strongly connected component of $\cA_r$ is strongly aperiodic.
	
	Let $R' \subset R$ be a terminal component. Because the action of $0$ is idempotent, there exists $s' = (s,0) \in R$ such that $\delta'(s',0) = s'$; fix a choice of such $s'$ and $s$. Denote $\cB = (R',s',\delta',\tau')$. For any $u \in \Sigma_k^*$ with $\delta(s,u) = s$ we have $[u]_k \in D_{\cA,s}$, and in particular $q \mid [u]_k$. Pick any $u$ as above, and $v \in 0\Sigma_k^*$ with $[u]_k = q[v]_k$. It follows from previous considerations that 
	
	$$
		\delta'\bra{s', v} = \bra{\delta\bra{s,u},0} = (s,0) = s'.
	$$
	Hence, $[u]_k/q \in D_{\cB,s'}$. Because $d_{\cA} = d_{\cA,s} = q$ is the greatest common divisor of the numbers $[u]_k$ for $u$ as above, $d_{\cB} = \gcd( D_{\cB,s'} ) = 1$. Hence, $\cB$ is strongly aperiodic by Proposition \ref{prop:very-primitive=>strongly-aperiodic}.
\end{proof}

\makeatletter{}\section{Pointwise convergence}\label{sec:Pointwise-convergence}

Along similar lines as in Section \ref{sec:Mean-Convergence}, we now finish the proof of Theorem \ref{thm:B} by means of reducing it to the following statement via Proposition \ref{prop:reduction:A}. 

\begin{proposition}\label{prop:cancellation-quantitative-A} 
	Let $a \colon \NN_0 \to \RR$ be a totally balanced $k$-automatic sequence. Then there exists a constant $c > 0$ such that 
	\begin{equation}
	\label{eq:760}
	\sup_{\alpha \in \RR} \abs{ \EE_{n < N} a(n) e(n\alpha) } \ll N^{-c} 
	\end{equation}
	as $N \to \infty$. 
\end{proposition}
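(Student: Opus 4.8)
The plan is to mimic the structure of the proof of Proposition \ref{prop:cancellation-qualitative}, but now carrying quantitative bounds throughout. As before, for an automatic sequence $b \colon \Sigma_k^* \to \RR$ produced within a fixed ambient partial automaton, set $A(L,b,\alpha) = \abs{\EE_{u \in \Sigma_k^L} b(u) e([u]_k \alpha)}$. The key recursive inequality
$$
A(L,b,\alpha) \leq \sum_{d \in \cN_k(b)} \frac{\abs{S_d^l}}{k^l}\, A(L-l,d,k^l\alpha)\, \abs{\EE_{m \in S_d^l} e(m\alpha)}
$$
still holds. The qualitative proof extracted a factor $(1-c)$ \emph{once}, using that condition (ii) of Corollary \ref{cor:linear-dichotomy} fails for a single well-chosen $l$; to get a power saving I instead want to iterate this $r$-fold gain over $\sim L/l$ scales. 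First I would fix $l$ large enough (depending only on the automaton) and run the recursion in blocks of size $l$ down to some bounded base scale. At each level, Corollary \ref{cor:linear-dichotomy} (applied with $r = \abs{\cN_k(a)}$, or the slightly larger bound on the number of co-kernel/kernel elements encountered) gives a dichotomy: either we win a factor $(1-\tfrac{1}{6r^2})$ at that level, or $\alpha$ is close to a rational $p/q$ with bounded denominator and small error.

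The heart of the argument is therefore a case analysis on $\alpha$. The bad case is when $\alpha$ stays ``rationally trapped'' at many scales, i.e.\ $\abs{\alpha - p/q} < Q/k^{jl}$ for some bounded $q$ and many $j$. But if $\alpha$ is genuinely within $Q/N$ of a fixed rational $p/q$ with $q \leq Q$, then $e(n\alpha)$ essentially agrees with the periodic sequence $e(np/q)$ on $[N]$ (up to an error $O(Q\log N / N)$ or so from summation by parts / the linear drift), and the sum $\EE_{n<N} a(n) e(np/q)$ is controlled by Corollary \ref{cor:antiper=>strongly-antiper}: since $a$ is totally balanced, $\EE_{n<N} a(n) b(n) = O(N^{-c})$ for \emph{every} periodic $b$, with $c$ uniform over the finitely many periodic sequences of period dividing $Q$. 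So in the rationally-trapped regime we get a power saving directly from total balancedness, and in the equidistributed regime we get a multiplicative gain from Corollary \ref{cor:linear-dichotomy}. Iterating and choosing the cutoff between ``bounded denominator'' and ``equidistributed'' appropriately (the threshold $Q$ grows only with $r$, which is fixed), a power-saving bound $A(L,b,\alpha) \ll k^{-c'L}$ follows uniformly in $\alpha \in \RR$ and $b$. Passing from $N = k^L$ to general $N$ is handled exactly as in Proposition \ref{prop:cancellation-qualitative} and Proposition \ref{prop:violet=>very-violet}: write $[N]$ as $M$ blocks of length $k^L$ with $L = \floor{\frac{9}{10}\log_k N}$ plus a short remainder, reducing to the co-kernel sequences at scale $k^L$ with an error $O(N^{-1/10})$.

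The main obstacle I anticipate is making the dichotomy interact cleanly with the multi-scale iteration, i.e.\ organizing the proof so that one does not lose the gain when $\alpha$ oscillates between the two regimes at different scales. Concretely, the issue is that the ``trapping rational'' $p/q$ from condition (ii) at scale $j$ need not be the same as at scale $j'$, and one must argue that either $\alpha$ is eventually trapped by a fixed rational with bounded denominator (handing the problem to Corollary \ref{cor:antiper=>strongly-antiper}), or infinitely many — in fact a positive proportion of — scales give the multiplicative gain. A convenient way to do this: fix $\alpha$ and $N$; let $p/q$ be the best rational approximation with $q \leq Q$; if $\abs{\alpha - p/q} \geq Q/N$, I would truncate the iteration at the largest scale where trapping is still possible and on all finer scales collect genuine gains, while if $\abs{\alpha - p/q} < Q/N$, I compare $e(n\alpha)$ with $e(np/q)$ on $[N]$ and invoke total balancedness. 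A secondary technical point is that the kernel sequences $b \in \cN_k(a)$ (and the co-kernel sequences used at the end) of a totally balanced sequence need not themselves be totally balanced — but they are all $k$-automatic, produced within a common finite partial automaton, so the class $\cM$ of such sequences is finite and closed under kernels/co-kernels, and the relevant periodic correlations $\EE_{n<N} b(n) e(np/q)$ can still be bounded: either directly if $b$ happens to be totally balanced, or by splitting $b$ into its ``periodic part'' (the averages along residues) plus a totally balanced remainder and handling the periodic part separately — which is precisely why the remainder term and the dichotomy must be set up with some care. I would keep all implied constants and the exponent $c$ depending only on $k$ and the size of the automaton producing $a$.
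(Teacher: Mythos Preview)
Your overall architecture---recursion on the length plus a minor/major arc dichotomy from Corollary~\ref{cor:linear-dichotomy}, with the major arc handled via Corollary~\ref{cor:antiper=>strongly-antiper}---is the right shape, but there is a genuine gap that you yourself flag and do not resolve. After a single step of the recursion you are no longer averaging $a$ but some $b\in\cN_k(a)$, and such $b$ need \emph{not} be balanced, let alone totally balanced. Your proposed remedy, ``splitting $b$ into its periodic part plus a totally balanced remainder,'' is exactly the decomposition that fails for general automatic sequences (the paper's own example $(-1)^{\nu_2(n)}$ has no such splitting). Consequently, once the iteration leaves $a$, neither branch of your dichotomy is available: the summation-by-parts comparison with $e(np/q)$ needs balancedness of the sequence being summed, and Corollary~\ref{cor:linear-dichotomy} gives nothing on a trapped step. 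A concrete failure: take $\alpha = k^l/(k^l+1)$ with $k^l+1$ coprime to $k$ and $l$ fixed. Then $\fpa{k^{jl}\alpha}\asymp k^{-l}$ for every $j$, so every step of your block iteration is in case~(\ref{cond:ld-2}) of Corollary~\ref{cor:linear-dichotomy} with $p/q=0$ and you collect no gain; yet $\fpa{\alpha}\asymp k^{-l}\gg Q/N$ for large $N$, so your global major-arc branch does not apply either.

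The paper closes this gap by a structural reduction you are missing entirely: before running any recursion, it passes (via the aperiodicity analysis culminating in Proposition~\ref{prop:aperiodic=>strongly-aperiodic}) to arithmetic subsequences $a(qn+r)$ produced by automata whose terminal components are \emph{strongly aperiodic}. For a strongly aperiodic partial automaton the visit frequencies $\pi(s_0,s;r(q))$ are independent of $s_0$ and of the residue $r$, which forces balanced $=$ totally balanced and, crucially, makes balancedness invariant under taking kernels. With that in hand the class $\cM$ in Proposition~\ref{prop:canc-quant-Sigma} really is closed under kernels, and the major-arc and neighbourhood-of-$0$ estimates become available at \emph{every} scale of the iteration, not just the top one. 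The paper then runs a more delicate variable-step combination of trivial, minor-arc, major-arc and near-zero bounds rather than a single fixed block size $l$---but the indispensable missing ingredient in your plan is the reduction to strong aperiodicity, without which the recursion does not close.
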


\begin{proof}[Proof of Theorem \ref{thm:B}, assuming Proposition \ref{prop:cancellation-quantitative-A}]
Immediate by Proposition \ref{prop:reduction:A}.
\end{proof}

We devote the remainder of this section to proving this Proposition \ref{prop:cancellation-quantitative-A}. To begin with, we reduce to the case when $N$ is a power of $k$. 

\begin{lemma}\label{lem:wlog-N=k^L}
	Fix $k \geq 2$. Suppose that $a \colon \NN_0 \to \CC$ is a sequence such that 
	\begin{equation}
	\label{eq:950}
	\sup_{\alpha \in \RR} \abs{ \EE_{n < k^L} a(n) e(n\alpha) } \ll k^{-cL} 
	\end{equation}
	for some constant $c > 0$. Then it also holds that
	\begin{equation}
	\label{eq:951}
	\sup_{\alpha \in \RR} \abs{ \EE_{n < N} a(n) e(n\alpha) } \ll N^{-c'},
	\end{equation}
	where $c' = \frac{2}{3} c$.
\end{lemma}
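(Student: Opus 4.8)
The plan is to reduce a general $N$ to powers of $k$ by the same greedy base-$k$ decomposition used in Lemma \ref{lem:cancellation-ThueMorse} and in the proof of Proposition \ref{prop:cancellation-qualitative}, but keeping careful track of how the error accumulates across the pieces. First I would fix $N$, let $L = \floor{\log_k N}$, and write $[N]$ as a disjoint union of blocks $I_j = [m_j k^{l_j}, (m_j+1)k^{l_j})$ with $L \geq l_1 > l_2 > \dots \geq 0$; such a decomposition exists and has at most $(k-1)L$ blocks (at most $k-1$ blocks of each length), built greedily by peeling off the largest aligned block at each stage. On each block one has $\EE_{n \in I_j} a(n) e(n\alpha) = e(m_j k^{l_j}\alpha)\, \EE_{n' < k^{l_j}} a(m_j k^{l_j} + n') e(n'\alpha)$, and the inner average is of the form $\EE_{n' < k^{l_j}} b_j(n') e(n'\alpha)$ where $b_j \in \cN'_k(a)$ is a co-kernel element of $a$. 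So far this is exactly the estimate $\abs{\EE_{n<N} a(n)e(n\alpha)} \leq \sum_j \frac{k^{l_j}}{N} \abs{\EE_{n'<k^{l_j}} b_j(n') e(n'\alpha)}$.

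The point where some care is needed is that hypothesis \eqref{eq:950} is stated only for $a$ itself, not for its co-kernel elements $b_j$, so strictly speaking one wants a version of \eqref{eq:950} uniform over $\cN'_k(a)$. There are two clean ways to handle this. The first: note that the proof of Proposition \ref{prop:cancellation-quantitative-A} actually establishes \eqref{eq:950} simultaneously for every co-kernel element (indeed the whole co-kernel $\cN'_k(a)$ is finite and one runs the same recursive argument on all of them at once), so in applying this lemma we may simply assume \eqref{eq:950} holds with a uniform constant over all $b \in \cN'_k(a)$. The second, more self-contained route: since $\cN'_k(a)$ is finite, and each $b \in \cN'_k(a)$ is itself $k$-automatic and a fixed finite linear combination / restriction pattern of $a$, the bound \eqref{eq:950} for $a$ transfers to each $b$ at the cost of an absolute constant. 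Either way, one gets $\abs{\EE_{n'<k^{l}} b_j(n') e(n'\alpha)} \ll k^{-c l_j}$ uniformly.

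Granting that, the conclusion is a geometric-series computation:
\begin{align*}
\abs{\EE_{n<N} a(n) e(n\alpha)}
&\ll \sum_{j} \frac{k^{l_j}}{N} k^{-c l_j}
= \frac{1}{N} \sum_j k^{(1-c) l_j}.
\end{align*}
Grouping the $l_j$ by value (at most $k-1$ blocks of each given length, lengths ranging over a subset of $\{0,1,\dots,L\}$), this is $\ll \frac{1}{N} \sum_{l=0}^{L} k^{(1-c)l} \ll \frac{1}{N} k^{(1-c)L}$, using $0 < c < 1$ (if $c \geq 1$ the bound $N^{-c'}$ is trivial anyway, or one replaces $c$ by $\min(c,1/2)$ at the outset). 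Since $k^L \leq N < k^{L+1}$, we get $\frac{1}{N} k^{(1-c)L} \ll N^{-1} \cdot N^{1-c} = N^{-c}$, which is even better than the claimed $N^{-c'}$ with $c' = \tfrac23 c$; the weaker exponent $c'$ in the statement presumably just absorbs the $k$-dependent implied constant comfortably (one can be lossy: $k^{(1-c)L} \leq N^{1-c}$ and a constant factor $k$ is dominated by $N^{c/3}$ once $N$ is large).

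The main obstacle, such as it is, is the bookkeeping in the previous paragraph — making sure \eqref{eq:950} is available for co-kernel elements and correctly counting the number of blocks of each length so that the geometric sum really is dominated by its top term. There is no hard analysis here; it is entirely a matter of organizing the greedy decomposition and summing a geometric series, and the slack between the honest exponent $c$ and the stated $c'=\tfrac23 c$ leaves ample room for the constants.
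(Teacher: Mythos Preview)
Your greedy decomposition forces you to bound averages of the form $\EE_{n' < k^{l_j}} a(m_j k^{l_j} + n')\, e(n'\alpha)$, i.e.\ averages of \emph{shifted} versions of $a$. The hypothesis \eqref{eq:950} says nothing about such shifts: it only controls $\EE_{n<k^L} a(n)\, e(n\alpha)$. Your two proposed fixes both rely on $\cN'_k(a)$ being finite and on the bound being uniform over it, but the lemma is stated for an \emph{arbitrary} sequence $a\colon \NN_0 \to \CC$, not an automatic one, so $\cN'_k(a)$ is in general infinite and there is no mechanism to transfer the bound to the shifted pieces. (Your second fix, that each $b \in \cN'_k(a)$ is ``a fixed finite linear combination / restriction pattern of $a$'', is also not correct even in the automatic case: co-kernel elements arise from changing the output function, not from any linear combination of shifts of $a$.) So as written the argument does not prove the lemma; at best it proves a variant with the stronger hypothesis $\sup_{b \in \cN'_k(a)} \sup_\alpha \abs{\EE_{n<k^L} b(n) e(n\alpha)} \ll k^{-cL}$.

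The paper's proof takes a genuinely different route that works for arbitrary $a$. It embeds $[N]$ into $[k^L]$ with $L = \ceil{\log_k N}$, approximates the indicator $1_{[N]}$ by a smoothed version $f = k^{\floor{\e L}}\, 1_{[N]} \ast 1_{[M]}$ on $\ZZ/k^L\ZZ$, and expands $f$ in Fourier series. The key point is that each Fourier mode contributes a term $\hat f(\xi)\,\EE_{n<k^L} a(n) e(n(\alpha+\xi))$, which is exactly of the form controlled by \eqref{eq:950} (the supremum over $\alpha$ absorbs the twist by $\xi$). Cauchy--Schwarz and Parseval then bound $\sum_\xi |\hat f(\xi)| \leq k^{\e L/2}$, and optimizing $\e = 2c/3$ yields the stated exponent. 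Thus the $\tfrac23$ is not slack to absorb constants but comes honestly from balancing the smoothing error $k^{-\e L}$ against the $\ell^1$-Fourier cost $k^{\e L/2}$. This approach never shifts $a$, which is exactly why it applies at the stated generality (and also why its analogue for polynomial phases, needed in Proposition~\ref{prop:cancel-quant-poly}, goes through: a polynomial plus a linear twist is still a polynomial of the same degree).
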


\begin{proof}
	Let $N$ be a large number, and put $L := \ceil{\log_k N}$. 
 Observe 
\begin{equation}
	\label{eq:951a}
\sup_{\alpha \in \RR} \abs{ \EE_{n < N} a(n) e(n\alpha) } \ll \sup_{\alpha \in \RR} \abs{ \EE_{n < k^L} 1_{[N]}(n)a(n) e(n\alpha) }.
\end{equation}

	In order to use Fourier analysis, we replace $1_{[N]}$ by its smoothed version. Fix $\e > 0$ (independent of $N$, to be determined later), and take $M = k^{L- \floor{ \e L} }$. It will be convenient to do Fourier analysis in the finite group $\ZZ/k^L\ZZ$, with which the interval $[k^L]$ can naturally be identified. We will approximate $1_{[N]}$ with
	
	$$
		f(n) = k^{\floor{ \e L}} 1_{[N]} \ast 1_{[M]} (n) = k^{\floor{ \e L}} \EE_{m \in \ZZ/k^L\ZZ} 1_{[N]}(m) 1_{[M]} (n-m).
	$$
	
	Note that $ \EE_{n < k^L} \abs{ f(n) - 1_{[N]}(n) } \ll k^{-\e L}$ because $f(n) = 1_{[N]}(n)$ unless $n$ is within distance $M$ of $0$ or $N$ (modulo $k^L$). Hence,  by (\ref{eq:951a}) we may estimate  
	\begin{equation}
	\label{eq:952}
		\sup_{\alpha \in \RR} \abs{ \EE_{n < N} a(n) e(n\alpha) } \ll \sup_{\alpha \in \RR} \abs{ \EE_{n < k^L} a(n)f(n) e(n\alpha) } + O( k^{-\e L}).
	\end{equation}
	In order to estimate the right hand side of \eqref{eq:952}, expand 
	
	$$f(n) = \sum_{\xi \in \widehat{\ZZ/k^L\ZZ} } \hat f(\xi) e(n \xi),$$
	 where we identify $\widehat{\ZZ/k^L\ZZ}$ with $k^{-L} \ZZ \bmod 1$. We may now estimate, using \eqref{eq:950}:
	\begin{align*}
	 \abs{ \EE_{n < k^L} a(n)f(n) e(n\alpha) } & \leq \sum_{\xi } \abs{ \hat f(\xi) \EE_{n < k^L} a(n) e(n(\alpha +\xi))} 
	  \ll k^{-cL} \sum_{\xi} \abs{ \hat f(\xi) }.
	\end{align*}
	Using the Cauchy-Schwarz inequality and Parseval's equality we find
	\begin{align*}
	 \sum_{\xi} \abs{ \hat f(\xi) } & =  k^{\floor{ \e L}} \sum_{\xi} \abs{ \hat 1_{[N]}(\xi) } \abs{ \hat 1_{[M]}(\xi) } 
	 \\ & \leq k^{\floor{ \e L}} \bra{ \sum_{\xi} \abs{ \hat 1_{[N]}(\xi) }^2 }^{1/2} \bra{ \sum_{\xi} \abs{ \hat 1_{[M]}(\xi) }^2 }^{1/2}
	 \\ & = k^{\floor{ \e L}} \bra{ \EE_{n < k^L} 1_{[N]}(n)^2 }^{1/2} \bra{ \EE_{n < k^L} 1_{[M]}(n)^2 }^{1/2} 
	  \leq k^{ \e L/2}.
	\end{align*}
	Combining the above bounds, we find that 
	\begin{equation}
	\label{eq:953}
	\sup_{\alpha \in \RR} \abs{ \EE_{n < N} a(n) e(n\alpha) } \ll N^{-\e} + N^{\e/2 - c}.
	\end{equation}
	Choosing $\e = 2c/3$ we obtain the claim.
\end{proof}
\color{black}

We will derive Proposition \ref{prop:cancellation-quantitative-A} from the following more technical result.

\begin{proposition}\label{prop:cancellation-quantitative-B} \label{prop:canc-quant-Sigma} Fix $k \geq 2$. 
	Let $(S,\delta)$ be a strongly aperiodic $k$-automaton without output and initial state, and let $\cM$ be the set of totally balanced sequences produced by a $k$-automaton $(S,s_0,\delta,\tau)$ for some initial state $s_0 \in S$ and output $\tau \colon S \to \set{ z \in \CC }{ \abs{z} \leq 1}$. Then, there exists $c > 0$ (depending only on $(S,\delta)$) such that 

	\begin{equation}
		\sup_{a \in \cM} \sup_{\a \in \RR} \abs{ \EE_{ u \in \Sigma_k^L} a(u) e([u]_k \a) } \ll  k^{-cL} \text{ as } L \to \infty. \label{eq:433}
	\end{equation}	
\end{proposition}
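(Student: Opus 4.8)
The plan is to mimic the argument for Proposition \ref{prop:cancellation-qualitative}, but now keeping careful track of quantitative decay rates and working uniformly over the whole family $\cM$, which is closed under taking kernels and co-kernels. For a totally balanced $a \in \cM$ and $\a \in \RR$, write $A(L,a,\a) = \abs{\EE_{u \in \Sigma_k^L} a(u) e([u]_k \a)}$. Splitting each word of length $L$ as $u = u'v$ with $u' \in \Sigma_k^{L-l}$ and $v \in \Sigma_k^l$ and grouping by the induced co-kernel sequence $b \in \cN_k'(a)$ (equivalently by the state $\delta(s,v)$), one gets a submultiplicativity-type estimate
$$
 A(L,a,\a) \ \le\ \sum_{s' \in S} \frac{\abs{T_{s'}^l}}{k^l}\, A\bigl(L-l, a_{s'}, \a\bigr)\, \abs{\EE_{m \in T_{s'}^l} e(m\a)},
$$
where $T_{s'}^l$ is the set of $[v]_k$, $v \in \Sigma_k^l$, with $\delta(s_0,v) = s'$, and $a_{s'}$ is the (still totally balanced, still in $\cM$) co-kernel sequence obtained by routing through $s'$. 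The point is that all the sequences that appear are again in $\cM$, so the same kind of bound applies recursively.

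The key new input, replacing the bare equidistribution of $(n\a)$ used in the qualitative proof, is the dichotomy of Corollary \ref{cor:linear-dichotomy} applied with $r = \abs{S}$: for a suitable large $l$ (depending only on $(S,\delta)$) and appropriate $N$-ranges, either the partition-average $\sum_{s'} \frac{\abs{T_{s'}^l}}{k^l}\abs{\EE_{m \in T_{s'}^l} e(m\a)}$ is bounded by $1 - \tfrac{1}{6\abs{S}^2}$, giving a genuine contraction at each step of the recursion, or $\a$ is close to a rational $p/q$ with bounded denominator $q \le Q$ and $\abs{\a - p/q} < Q/N$. In the first "major-arc-free" regime, iterating the recursion over $\approx L/l$ steps yields the geometric decay $A(L,a,\a) \ll (1 - c_0)^{L/l} \ll k^{-cL}$ with $c$ depending only on $(S,\delta)$, uniformly over $a \in \cM$. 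The remaining case is where total balancedness and strong aperiodicity enter: if $\a$ is within $Q/k^L$ of $p/q$ with $q \le Q$, write $e([u]_k \a) = e([u]_k p/q)\, e([u]_k(\a - p/q))$; the second factor varies by $o(1)$ over $[u]_k \in [k^L]$, so up to an acceptable error $A(L,a,\a)$ is controlled by $\abs{\EE_{u \in \Sigma_k^L} a(u) e([u]_k p/q)}$. Since $n \mapsto e(np/q)$ is periodic with period $q \le Q$, this is exactly a correlation of the automatic sequence $a$ with a periodic sequence; strong aperiodicity of $(S,\delta)$ forces the frequencies $\pi(s', r(q))$ to equal $\pi(s')$, so after summing over residues $r$ this correlation tends to $0$, and Corollary \ref{cor:antiper=>strongly-antiper} (via Proposition \ref{prop:violet=>very-violet}) upgrades this to a power-saving bound $O(k^{-c'L})$, again with rate depending only on $(S,\delta)$ once we note there are only finitely many moduli $q \le Q$ and finitely many co-kernel sequences to consider.

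Assembling these two regimes gives a single exponent $c > 0$, depending only on $(S,\delta)$, valid uniformly in $a \in \cM$ and $\a \in \RR$ for $L$ large, which is \eqref{eq:433}. The main obstacle is bookkeeping the uniformity: one must ensure that the contraction constant $c_0$, the approximation scale $Q$, and the power-saving rate $c'$ coming from Corollary \ref{cor:antiper=>strongly-antiper} can all be chosen depending on the partial automaton $(S,\delta)$ alone — not on the particular output function $\tau$ or initial state $s_0$ — and that the recursion does not leave the class $\cM$ (which is where closure under kernels/co-kernels of $\cM$ is used) nor destroy total balancedness. Handling the boundary between the two regimes cleanly — i.e.\ choosing the smoothing scale in the rational case and the step length $l$ in the equidistributed case so the two error terms match up to a common power of $k^{-L}$, exactly as in the proof of Lemma \ref{lem:wlog-N=k^L} — is the other delicate point.
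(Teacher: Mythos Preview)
Your overall architecture is right and matches the paper's: a submultiplicative recursion, a minor-arc contraction from Corollary~\ref{cor:linear-dichotomy}, and a major-arc input from strong aperiodicity together with Proposition~\ref{prop:violet=>very-violet}. Two genuine gaps remain in the execution.

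First, the recursion does not keep the phase fixed. With $u = u'v$ and $v \in \Sigma_k^l$ the low-order block, one has $e([u]_k\alpha) = e(k^l[u']_k\alpha)\,e([v]_k\alpha)$, so the inner term is $A(L-l,\,b,\,k^l\alpha)$, not $A(L-l,\,b,\,\alpha)$; moreover $u' \mapsto a(u'v)$ is a \emph{kernel} element (new initial state $\delta(s_0,v)$), not a co-kernel element. Consequently the phase at step $j$ of the iteration is $k^{jl}\alpha$, and whether the contraction from Corollary~\ref{cor:linear-dichotomy} applies at that step depends on the base-$k$ digits of $\alpha$ near position $jl$. One cannot assert ``$\alpha$ in minor arcs $\Rightarrow$ iterate $L/l$ times'': for instance if $\alpha = p/q$ with $q \le Q$ then \emph{every} step is major-arc. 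The major-arc input must therefore itself be an \emph{iterable contraction} of the form $A(L,\alpha) \le k^{-c'l}A(L-l,k^l\alpha)$, not a one-shot terminal bound on $A(L,\alpha)$.

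Second, the terminal major-arc bound you sketch does not hold as stated. If $|\alpha - p/q| < Q/N$ (which is what Corollary~\ref{cor:linear-dichotomy} yields when contraction fails), then $e(n(\alpha - p/q))$ varies by $O(Q)$, not $o(1)$, over $n < N$, so one cannot simply replace $\alpha$ by $p/q$. The paper's fix is to regard each term $\bigl|\EE_{v \in \Sigma_k^l} 1_{S_{a,b}^l}(v)\,e([v]_k\alpha)\bigr|$ in the partition average as an exponential sum of an automatic indicator $g$ on $(S,\delta)$, split it further into sub-blocks of length $m = l(1-2c)$ on which the error phase is $O(k^{m-l})$, and only then invoke the power saving for the periodic correlation $\EE_u g(u)e([u]_k p/q)$ on each sub-block --- strong aperiodicity being exactly what makes this correlation balanced when $p/q \notin \ZZ$. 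The case $p/q \in \ZZ$ (the neighbourhood of $0$) is handled by a separate direct estimate. Assembling the resulting trivial, minor-arc, major-arc, and neighbourhood-of-$0$ bounds into a single inductive step that walks through the base-$k$ expansion of $\alpha$ is where the real work lies; you have correctly flagged this as delicate, but it is the substance of the proof rather than a side issue.
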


\begin{proof}[Proof of Proposition \ref{prop:cancellation-quantitative-A} assuming Proposition \ref{prop:cancellation-quantitative-B}]

Let $a$ be a $\CC$-valued totally balanced $k$-automatic sequence, and let $N$ be a large integer. Or goal is to prove that the estimate \eqref{eq:760} holds. The reduction will consist of several steps. Let $\cA = (S,s_0,\delta,\tau)$ be a $k$-automaton producing $a$, and assume without loss of generality that $\cA$ ignores the leading $0$'s. 

\textit{Step 1.} It suffices to prove the assertion for all $\cA$ which are strongly connected and ignore leading $0$'s.
\begin{proof}
Take $L = \floor{ (\log_k N) / 2 }$. We may estimate
	\begin{equation}
	\label{eq:762-1}
	\sup_{\alpha \in \RR} \abs{ \EE_{n < N} a(n) e(n\alpha) } \ll \EE_{m < k^L} \sup_{\beta \in \RR} \abs{ \EE_{n < \floor{ N/k^L} } a(k^Ln + m) e(n\beta) } + O(N^{-1/2}).
	\end{equation}

	If $a$ is produced by $\cA = (S,\delta,s_0,\tau)$, then for any $m$ with $0 \leq m < k^L$, the sequence $a'_m$ given by $a'_m(n) = a(k^L n + m)$ is produced by the automaton $\cA'_m = (S,\delta,s_m',\tau)$, where $s'_m = \delta(s_0,(m)_k) $. The proportion of $m < k^L$ such that $s'_m$ fails to belong to a strongly connected component is $\ll k^{-c_1 L}$ for some constant $c_1 > 0$ (depending only on $(S,\delta)$). Note that for any $m$ with $0 \leq m < k^L$, the sequence $a'_m$ is totally balanced, the automaton $\cA'_m$ ignores leading $0$'s, and if $s'_m$ lies in a strongly connected component then $\cA'_m$ is strongly connected. 
	
	Suppose that we already know that \eqref{eq:760} holds with $c = c_{\mathrm{sc}}$ for those of $\cA'_m$ which are strongly connected. Applying this estimate where applicable (and estimating the remaining summands trivially by $1$) we conclude that \eqref{eq:760} holds for $\cA$ with $c = \min(c_1,c_{\mathrm{sc}}/2) > 0$.
\end{proof}
	
\textit{Step 2.}  It suffices to prove the assertion for all $\cA$ with the property that each strongly connected component of $\cA$ is strongly aperiodic and ignores the leading $0$'s (but $\cA$ may not be strongly connected).
\begin{proof}

By Proposition \ref{prop:aperiodic=>strongly-aperiodic}, there exist $q \in \NN$ and $k'$ (depending only on $(S,\delta)$) such that each of the sequences $a_r'(n) = a(qn + r)$ for $r$ with $0 \leq r < q$ is produced by a $k'$-automaton all of whose strongly connected components are aperiodic. Note that for $r$ with $0 \leq r < q$, the sequences $a_r'$ are again totally balanced. 

Using the analogue of \eqref{eq:762-1},
	\begin{equation}
	\label{eq:761}
	\sup_{\alpha \in \RR} \abs{ \EE_{n < N} a(n) e(n\alpha) } \ll \max_{ r \bmod q} \sup_{\beta \in \RR}\abs{ \EE_{n < \floor{ N/q} } a'_r(n) e(n\beta) } + O(1/N),
	\end{equation}
and the argument reminiscent of that in Step 1, we conclude that it is enough to prove the claim for automata with strongly aperiodic strongly connected components. 
\end{proof}

\textit{Step 3.} Without loss of generality, $\cA$ is strongly connected, strongly-antiperiodic and ignores the leading $0$'s.
\begin{proof}
	Assume, as we may, that each strongly connected component of $\cA$ is strongly aperiodic, and apply the same reduction as in Step 1. Note that, with notation as in Step 1, each of $\cA_m'$ has the property that each of its strongly connected components is strongly aperiodic. Hence, if \eqref{eq:760} holds for strongly connected and strongly aperiodic automata with constant $c = c_{\mathrm{sa}}$, then it holds for $\cA$ with $c = \min(c_1,c_{\mathrm{sa}}/2)$.
\end{proof}

\textit{Step 4.} Without loss of generality, $\cA$ is strongly connected, strongly-antiperiodic, ignores the leading $0$'s, and $N$ is a power of $k$.
\begin{proof}
	It follows from Lemma \ref{lem:wlog-N=k^L} that if \eqref{eq:760} holds for $N$ which are powers of $k$ with $c = c_{\mathrm{pow}}$, then it holds for general $N$ with $c = 2c_{\mathrm{pow}}/3$.
\end{proof}

The remainder of the claim follows directly from Proposition \ref{prop:cancellation-quantitative-B} applied to the partial automaton $(S,\delta)$.	
\end{proof}

\begin{proof}[Proof of Proposition \ref{prop:cancellation-quantitative-B}]
	Note that $\cM$ is compact (in the $\norm{\cdot}_\infty$ topology) and closed under the operations of taking kernels. 
	Denote for $a \in \cM$, $\alpha \in \RR$ and $L \geq 0$ the corresponding averages
	
	$$A(L,a,\a) = \abs{ \EE_{ u \in \Sigma_k^L} a(u) e( [u]_k \a) },$$
	and put $A(L,\a) = \sup_{a \in \cM} A(L,a,\a)$. By definition, $A(L,\a) \leq 1$.
		Recall from the proof of Proposition \ref{prop:cancellation-qualitative} that for each $l$ with $0 \leq l \leq L$ we have the recursive relation 
	
	\begin{align}
		\label{eq:450}		
		A(L,\a)
		 & \leq A(L-l,k^l\a) \cdot \sup_{a \in \cM} \sum_{b \in \cM }
		 \abs{ \EE_{ v \in \Sigma_k^l } 1_{S_{a,b}^{l}}(v)   e([v]_k \a)} ,
	\end{align}
	where the sets $S_{a,b}^l$ are given by 
	
	$$S_{a,b}^{l} = \set{ v \in \Sigma_k^l }{ a\bra{ u v} = b\bra{u} \text{ for all } u \in \Sigma_k^*}.$$ 
	Note that for given $a \in \cM$ and $l \geq 0$, the sets $S_{a,b}^{l}$ for $b \in \cM$ are a partition of $\Sigma_k^l$, and $S_{a,b}^l = \emptyset$ unless $b \in \cN_k(a)$. Hence, the sum in \eqref{eq:450} is really finite for each $a\in \cM$. 
	
	We aim to recursively exploit \eqref{eq:450} to obtain \eqref{eq:433}. In fact, depending on Diophantine properties of $\alpha$ and the value of $L$, we will use one of several estimates, with different values of $l$. Let $Q$ be the constant from Corollary \ref{cor:linear-dichotomy} applied with $r = \abs{S}$. Note that $Q$ may be replaced by any larger number; in particular we may assume without loss of generality that $Q \geq k$.

	\textit{Trivial estimate.} Let $\alpha \in \RR$ and $l$ with $0 \leq l \leq L$ be arbitrary. Then 
	
	$$A(L,\a)  \leq A(L-l,k^l\a).$$
	\begin{proof} 
	This is an immediate consequence of \eqref{eq:450}.
	\end{proof}	
	
	\textit{Minor arc estimate.} Let $\alpha \in \RR$ and  $0 \leq l \leq L$. Suppose that there exist no $p,q$ with $0 < q \leq Q$ such that $\abs{ \alpha - \frac{p}{q}} < \frac{Q}{k^l}$. Then $A(L,\alpha) \leq (1-\frac{1}{6 \abs{S}^2}) A(L-l,k^l \alpha)$.
	
	\begin{proof} 
	This is an immediate consequence of \eqref{eq:450} and Corollary \ref{cor:linear-dichotomy} applied for each $a \in \cM$ to the partition $\bar{S}_{a,b}^l = \set{[u]_k }{ u \in {S}_{a,b}^l}$ over $b \in \cN_k(a)$ of $[2^L]$ with $\leq \abs{S}$ parts.	
	\end{proof}	

	\textit{Major arc estimate.} There exist constants $c_{\mathrm{maj}}$ and $L_0$ (depending only on $\cA$) such that the following holds. Let $\alpha \in \RR$ and $l$ with $L_0  \leq l \leq L$. Suppose that there exist $p,q$ with $0 < p < q \leq Q$ such that $\abs{ \alpha - \frac{p}{q}} < 1/k^{l}$. Then
	
	 $$ A(L,\a) \leq k^{-c_{\mathrm{maj}} l} A(L-l,k^l \alpha).$$
	\begin{proof}
	Fix any choice of $a \in \cM$, $b \in \cN_k(a)$. In order to prove the claim, it will suffice to estimate the term 
\begin{equation}
	\label{eq:459}
	\abs{ \EE_{ v \in \Sigma_k^l } 1_{S_{a,b}^{l}}(v) e([v]_k \a)}
\end{equation}		
appearing in \eqref{eq:450}.

	Let $g \colon \Sigma_k^* \to \{0,1\}$ be the sequence given by
	
	$$
	g(v) = 
	\begin{cases}
		1, & \text{ if } a(uv) = b(u) \text{ for all } u \in \Sigma_k^*,\\
		0, & \text{ otherwise.}
	\end{cases}
	$$	
	
	Thus, for $v \in \Sigma_k^l$ we have $v \in S_{a,b}^l$ precisely when $g(v) = 1$. We further note that $g$ is generated by an automaton with states and transition function $(S,\delta)$. Indeed, suppose that $a$ is produced by the automaton $(S,s_0,\delta,\tau)$ for some choice of $s_0 \in S$ and $\tau \colon S \to \CC$. Let $S_1$ be the set of states $s_1 \in S$ such that $b$ is produced by the automaton $(S,s_1,\delta,\tau)$. Then, one immediately sees that $g(v) = 1$ precisely when $\delta(s_0,v) \in S_1$, so $g$ is produced by the automaton $(S,s_0,\delta,1_{S_1})$. 
	
	Note that $ g(u) e([u]_k p/q)$ is an automatic sequence, and it is balanced. To see this, note that we have
	
\begin{align*}
	\EE_{u \in \Sigma_k^m}  g(u) e([u]_k p/q) 
	& = \sum_{r \bmod q } e(pr/q) \EE_{\substack { u \in \Sigma_k^m\\ [u]_k \equiv r (q) }} g(u)
	\\ & \to \sum_{r \bmod q } e(pr/q) \sum_{s \in S_1} \pi(s;r(q)) 
	\\ & = \sum_{r \bmod q } e(pr/q) \sum_{s \in S_1} \pi(s) = 0, 
	\text{ as } m \to \infty,
\end{align*}
	where in the last step we use $p/q \not \in \ZZ$. It follows from Proposition \ref{prop:violet=>very-violet} that there exists a constant $c > 0$ such that 
	\begin{equation}
	\abs{ \EE_{u \in \Sigma_k^m}  g(u) e([u]_k p/q) } \ll k^{-cm}.
	\label{eq:460}
	\end{equation}
	A priori, the value of $c$ and of the implicit constant in \eqref{eq:460} depend on $g$; however only finitely many choices of $g$ are possible (since $g$ is fully determined by $S_1 \subset S$), hence we may choose these constants uniformly. By the same token (possibly after changing the value of $c$), we have for each $v \in \Sigma^*_k$ the bound 
	\begin{equation}
	\abs{ \EE_{u \in \Sigma_k^m}  g(vu) e([u]_k p/q) } \ll k^{-cm},
	\label{eq:460a}
	\end{equation}	
	where $c$ and the implicit constant do not depend on $v$.
	
	Using \eqref{eq:460a}, we may now etimate \eqref{eq:450}. Take $m = l(1-2c_{\mathrm{maj}})$. We find
	
	\begin{align*}
	\abs{ \EE_{u \in \Sigma_k^l}  g(u) e([u]_k \alpha) }
	 &\leq \EE_{v \in \Sigma_k^{l-m} } \abs{ \EE_{u \in \Sigma_k^m}  g(u) e([u]_k p/q) } + 
	O( \fpa{q\alpha} k^m )
	 \leq C k^{2 c_{\mathrm{maj}}l }
	\end{align*}	 
	for a constant $C$, assuming (as we may) that $c_{\mathrm{maj}}$ is chosen small enough with respect to $c$ that $c(1-2 c_{\mathrm{maj}}) > 2c_{\mathrm{maj}}$. Taking $L_0$ sufficiently large that $k^{c_{\mathrm{maj}} L_0} > C$ we conclude that 
	
	\begin{align*}
	\abs{ \EE_{u \in \Sigma_k^l}  g(u) e([u]_k \alpha) }
	 \leq k^{c_{\mathrm{maj}}l },
	\end{align*}
	and the claim follows from \eqref{eq:450}.
	\end{proof}
	
	\textit{Neighbourhood of $0$.} There exists a constant $c_{\mathrm{sml}} > 0$ such that for any $L \geq 0$ and $\alpha \in \RR$ it holds that 
	
	$$A(L,\a) \ll \fpa{\alpha}^{c_{\mathrm{sml}}},$$
	where the implicit constant depends only on $(S,\delta)$.
	\begin{proof}
	For any $m$ with $0 \leq m \leq L$ we have the estimate
	
	\begin{align}
	\label{eq:470}
		\abs{ A(L,\a) } & \leq \sup_{a \in \cM} \EE_{u \in \Sigma_k^{L-m} } \abs{ \EE_{v \in \Sigma_k^{m}} a(uv)e([v]_k \a) }
		 \leq  A(m,0) + O(\fpa{\alpha} k^{m}).
	\end{align}
		
	Proposition \ref{prop:violet=>very-violet} implies the bound $A(m,0) = O(k^{-cm})$ for a constant $c > 0$. Pick $m \sim \fpa{\alpha}^{-1/2}$; the required bound follows with $c_{\mathrm{sml}} = c/2$.
	\end{proof}
	
	We now proceed to prove \eqref{eq:433}. Take any $L \geq 0$ and $\alpha \in \RR$. If $\fpa{\alpha} < k^{-L/10}$ then \eqref{eq:433} follows from the estimate obtained for the neighbourhood of $0$, so suppose this is not the case. Thus, there is some $l$ with $0 \leq l \leq L/10$ such that $\fpa{k^l \alpha} \geq 1/k$, and by the trivial estimate we have 
	
	$$
		A(L,\alpha) \leq A(L-l, \fp{k^l \alpha}).
	$$
	If the estimate \eqref{eq:433} holds for $A(L-l, \fp{k^l \alpha})$, then it also holds for $A(L,\alpha)$ (with the constant $c$ smaller by the factor of $9/10$). Hence, replacing $\alpha$ with $\{k^l \alpha\}$ and $L$ with $L-l$, we may now assume that $\fpa{ \alpha } \geq 1/k$. We now combine the obtained estimates to obtain an inductive step.
		
	\textit{Combined estimate.} There exist constants $c_{\mathrm{cmb}} > 0$ and $L_1 \geq 0$ such that for any $\alpha \in \RR$ with $\fpa{\alpha} \geq 1/k$ and any $L \geq L_1$, there exists $l$ with $0 < l \leq L$ such that $A(L,\alpha) \leq k^{-c_{\mathrm{cmb}l}} A(L-l, k^l \alpha)$ and if $l \neq L$ then $\fpa{k^l \alpha} \geq 1/k$. 
	\begin{proof}
To begin with, we branch off into cases depending on the length of the longest string of $0$'s in the first $L_1$ digits of $\alpha$ base $k$.
	
	Suppose that there is a string of $\geq L_1/2$ consecutive $0$'s in the initial $L_1$ digits of $\alpha$. This means that there is some $l_1 \leq L_1/2$ and a digit $j$ with $0 \leq j < k$ such that $\fpa{ k^{l_1} \alpha - \frac{j}{k} } \leq k^{-L_1/2} $. Using the trivial estimate we have $A(L,\alpha) \leq A(L-l_1,k^{l_1}\alpha)$. Pick the largest $l_2 \leq L$ such that $\fpa{ k^{l_1} \alpha - \frac{j}{k} } \leq k^{-l_2}$; note that $l_2 \geq L_1/2$. Recall that $Q \geq k$. Hence, the major arc estimate can be used to estimate
	
	$$
	A(L-l_1,k^{l_1}\alpha) \leq k^{-c_{\mathrm{maj} }l_2} A(L-l_1-l_2, k^{l_1+l_2} \alpha).
	$$
	By the choice of $l_2$ we have, $\fpa{ k^{l_1+l_2} \alpha} \geq 1/k$ or $l_2 = L-l_1$. Hence, the claim holds with $l = l_1+l_2$ (as long as $c_{\mathrm{cmb}} \leq c_{\mathrm{maj}}/2$).
	
	Suppose now that there is no string of $0$'s of length $\geq L_1/2$. Again, there are two cases to consider, depending on whether there exist $p,q$ with $0 \leq p < q \leq Q$ such that $\fpa{ \alpha - \frac{p}{q}} \leq Q/k^{L_1/2}$.
	
	If no, then  we may apply the minor arc estimate with $l_1 = L_1/2$ to obtain
	\begin{equation}
	\label{eq:470a}
		A(L,\alpha) \leq k^{- c_{\mathrm{cmb}} L_1 }  A(L-l_1,k^{l_1} \alpha),
	\end{equation}
	(as long as $c_{\mathrm{cmb}}$ is small enough that $1-\frac{1}{6\abs{S}^2} \leq k^{- c_{\mathrm{cmb}} L_1 }$).
	
	Otherwise, the major arc estimate is applicable with $l_1 = \floor{ L_1/2 - \log_k Q} \geq L_1/3 \geq L_0$, provided that $L_1$ is chosen large enough that $L_1 \geq \max( 6 \ceil{ \log_k Q }, 3 L_0)$. We obtain the same estimate
	\begin{equation}
	\label{eq:470b}
		A(L,\alpha) \leq k^{- c_{\mathrm{cmb}} L_1 }  A(L-l_1,k^{l_1} \alpha)
	\end{equation}
	(as long as $c_{\mathrm{cmb}}$ is small enough that $  c_{\mathrm{cmb}} \leq c_{\mathrm{maj}}/3$). 
	
	In either case, let $l_2$ with $0 \leq l_2 \leq L_1 - l_1$ be the least integer such that $\fpa{k^{l_1+l_2}\alpha} \geq 1/k$. Such an $l_2$ exists, because $\alpha$ is assumed to have at least one non-zero digit at positions between $l_1+1$ and $l_1 + L_1/2 \leq L_1$. Applying the trivial bound with $l_2$ to the right hand side of \eqref{eq:470a} or \eqref{eq:470b} respectively, we thus 
	
	\begin{equation*}
		A(L,\alpha) \leq k^{- c_{\mathrm{cmb}} L_1 }  A(L-l_1-l_2,k^{l_1+l_2} \alpha),
	\end{equation*}
	so the claim holds with $l = l_1+l_2$.
	\end{proof}	
	
	Iterating the combined estimate we have just obtained gives \eqref{eq:433} (with $c = c_{\mathrm{cmb}}$) by a simple inductive argument.
\end{proof}

\makeatletter{}\section{Invertible sequences}\label{sec:Invertible}
\newcommand{\id}{\mathrm{id}}
In this section, we deal with \emph{invertible} automatic sequences, and show that, in a quantitative sense, they cannot correlate with polynomial phases. As discussed in Section \ref{sec:outline}, once this is accomplished, Theorem \ref{thm:D} will follow.

A $k$-automatic sequence $a \colon \NN_0 \to \Omega$ is \emph{invertible} if it is generated by an automaton $\cA = (S,s_0,\delta,\tau)$ such that for each $j \in \Sigma_k$, the map $\delta(\cdot,j) \colon S \to S$ is invertible \cite{DrmotaMorgenbesser-2012}. A \emph{generalised Thue--Morse sequence} taking values in a finite group $G$ is a $k$-automatic sequence $g \colon \Sigma_k^* \to G$ such that $g(uv) = g(u)g(v)$ for any $u,v \in \Sigma_k^*$, and $g(0) = \id_G$. Note that $g(n)$ is then characterised by $g(1),\dots,g(k-1)$. Invertible sequences are now precisely the ones of the form $a(n) = \pi( g(n) )$, where $g(n) \in G$ is a generalised Thue--Morse sequence, and $\pi \colon G \to \Omega$ is any function (see \cite{DrmotaMorgenbesser-2012} for further discussion).

\begin{lemma}\label{lem:invert-prod} If $a_1,a_2$ are invertible $k$-automatic sequences, then so is $(a_1,a_2)$. In particular, the family of $\CC$-valued invertible $k$-automatic sequences is a ring.
\end{lemma}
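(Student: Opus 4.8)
The plan is to use the characterisation of invertible sequences via generalised Thue--Morse sequences. Recall that $a_i$ invertible means $a_i(n) = \pi_i(g_i(n))$ for some finite group $G_i$, a generalised Thue--Morse sequence $g_i \colon \Sigma_k^* \to G_i$ (i.e.\ $g_i(uv) = g_i(u)g_i(v)$ and $g_i(0) = \id_{G_i}$), and some function $\pi_i \colon G_i \to \CC$. The key observation is that the pair $(g_1, g_2) \colon \Sigma_k^* \to G_1 \times G_2$ is again a generalised Thue--Morse sequence: it is clearly multiplicative with respect to concatenation, sends $0$ to $(\id_{G_1},\id_{G_2}) = \id_{G_1 \times G_2}$, and is $k$-automatic because it takes finitely many values and each restriction of its argument by concatenation only permutes finitely many outputs — more concretely, it is generated by the product automaton, whose transition maps $\delta(\cdot, j)$ are the products of the (invertible) transition maps of the automata for $g_1$ and $g_2$, hence themselves invertible. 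Therefore $(g_1,g_2)$ generates an invertible automatic sequence, and composing with $(\pi_1 \times \pi_2) \colon G_1 \times G_2 \to \CC^2$ shows $(a_1, a_2) = (\pi_1 \times \pi_2)\circ(g_1,g_2)$ is invertible.

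First I would spell out the product automaton construction. If $a_i$ is generated by $\cA_i = (S_i, s_0^{(i)}, \delta_i, \tau_i)$ with each $\delta_i(\cdot, j)$ a bijection of $S_i$, then let $\cA = (S_1 \times S_2,\ (s_0^{(1)}, s_0^{(2)}),\ \delta,\ \tau)$ where $\delta\bra{(s_1,s_2), j} = \bra{\delta_1(s_1,j), \delta_2(s_2,j)}$ and $\tau(s_1,s_2) = \bra{\tau_1(s_1), \tau_2(s_2)}$. For each fixed $j \in \Sigma_k$, $\delta(\cdot, j) = \delta_1(\cdot,j) \times \delta_2(\cdot, j)$ is a product of bijections, hence a bijection of $S_1 \times S_2$; and one checks by induction on $\abs{w}$ that $\delta\bra{(s_1,s_2), w} = \bra{\delta_1(s_1,w), \delta_2(s_2,w)}$, so $\cA$ produces exactly the sequence $n \mapsto \bra{a_1(n), a_2(n)}$. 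Moreover, if $\cA_1$ and $\cA_2$ ignore leading $0$'s then so does $\cA$. This already gives that $(a_1, a_2)$ is an invertible $k$-automatic sequence.

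For the second assertion, I would note that the $\CC$-valued invertible $k$-automatic sequences form a vector space over $\CC$ (closure under addition and scalar multiplication is immediate from the first part: $a_1 + a_2 = \Sigma \circ (a_1, a_2)$ where $\Sigma(z_1,z_2) = z_1 + z_2$, and $\lambda a_1 = (\lambda \cdot)\circ a_1$ uses the same automaton with output rescaled), and likewise closure under multiplication follows since $a_1 a_2 = \Pi \circ (a_1,a_2)$ with $\Pi(z_1,z_2) = z_1 z_2$. The constant sequence $1$ is invertible (generated by a one-state automaton). Hence the $\CC$-valued invertible $k$-automatic sequences form a commutative ring with unit.

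I do not expect any serious obstacle here; the only point requiring a little care is the inductive verification that the product automaton's extended transition function is the coordinatewise product, and the observation that invertibility of $\delta(\cdot,j)$ is preserved under taking products — both of which are routine. The conceptual content is simply that ``invertible'' is a property of the transition monoid being a group, and this is stable under the product-automaton construction.
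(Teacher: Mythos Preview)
Your proposal is correct and follows essentially the same approach as the paper: both construct the product automaton $\cA = (S_1 \times S_2, (s_0^{(1)}, s_0^{(2)}), \delta_1 \times \delta_2, \tau_1 \times \tau_2)$ and observe that a Cartesian product of bijections is a bijection. You additionally spell out the ring closure (via postcomposition with $\Sigma$ and $\Pi$) and frame the construction through the generalised Thue--Morse characterisation, which the paper leaves implicit; this extra detail is fine but not needed.
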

\begin{proof}
	Let $a_i$ be produced by $k$-automata $\cA_i = (S_i, s_{i,0}, \delta_i, \tau_i)$. Then $a = (a_1,a_2)$ is produced by the $k$-automaton $\cA = (S,s_0,\delta,\tau)$ where $S = S_1 \times S_2$, $s_0 = (s_{0,1}, s_{0,2})$, $\delta = \delta_1 \times \delta_2$ (i.e.\ $\delta\bra{ (s_1,s_2), j} = \bra{ \delta(s_1,j), \delta(s_2,j)}$) and $\tau(s_1,s_2) = \bra{ \tau_1(s_1), \tau_2(s_2)}$. It remains to note that a Cartesian product of invertible maps is invertible.
\end{proof}

The following is a direct consequence of \cite[Theorem 3]{DrmotaMorgenbesser-2012}. A sequence $a(n)$ is $d$-periodic if $a(n+d) = a(n)$ for all $n$; we do not require that $d$ should be the least period.

\begin{proposition}\label{prop:invert-decomp}
	Fix $k \geq 2$. Let $a \colon \NN_0 \to \CC$ be an invertible $k$-automatic sequence. Then, there exists a decomposition $$a(n) = a_{\mathrm{per}}(n) + a_{\mathrm{bal}}(n),$$
	where $a_{\mathrm{per}}(n)$ is $(k-1)$-periodic and $a_{\mathrm{bal}}(n)$ is totally balanced (and invertible).
\end{proposition}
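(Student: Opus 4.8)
The plan is to prove the decomposition by passing to the group-theoretic description of invertible sequences and invoking the cited structure theorem. Recall that an invertible $k$-automatic sequence $a$ has the form $a(n) = \pi(g(n))$, where $g \colon \Sigma_k^* \to G$ is a generalised Thue--Morse sequence into a finite group $G$ (so $g(uv) = g(u)g(v)$ and $g(0) = \id_G$) and $\pi \colon G \to \CC$ is an arbitrary function. The first step is to reduce, by linearity, to understanding the behaviour of the individual sequences $n \mapsto \chi(g(n))$ for $\chi$ ranging over a spanning set of functions on $G$; concretely, it suffices to treat $a = \pi \circ g$ for a single $\pi$, and then to analyse $\EE_{n<N} a(qn+r)$ for every arithmetic progression, since ``totally balanced'' is exactly the assertion that these Ces\`aro means vanish for all $q,r$.

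Second, I would quote \cite[Theorem 3]{DrmotaMorgenbesser-2012}: for a generalised Thue--Morse sequence $g$ into a finite group $G$, the sequence $g(n)$ (and hence $\pi(g(n))$) is equidistributed in an appropriate sense, with a subgroup-coset structure governing the ``periodic part''. The key point from that theorem is that the only obstruction to $\pi(g(n))$ being totally balanced comes from the value of $g$ on digit strings of the form $(n)_k$, and this obstruction is controlled by $g(1) \cdots g(k-1)$ and the digit sum $s_k(n) \bmod (k-1)$. More precisely, the theorem yields that $n \mapsto \pi(g(n))$ can be split into a part that only depends on $s_k(n) \bmod{(k-1)}$ — which is $(k-1)$-periodic, since $s_k(n+(k-1)) \equiv s_k(n) \bmod (k-1)$ — and a remainder which equidistributes in every residue class, i.e.\ is totally balanced. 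Then I would set $a_{\mathrm{per}}(n)$ to be the $(k-1)$-periodic part extracted this way and $a_{\mathrm{bal}}(n) = a(n) - a_{\mathrm{per}}(n)$; that $a_{\mathrm{bal}}$ is again invertible follows because $a_{\mathrm{per}}$, being periodic, is itself invertible automatic (a cyclic automaton on $k-1$ states), and by Lemma \ref{lem:invert-prod} (or rather its additive counterpart noted there) the invertible $k$-automatic sequences form a ring closed under subtraction.

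Third, I would verify the two defining properties of the decomposition directly. Periodicity of $a_{\mathrm{per}}$ with period $k-1$ is immediate from its construction as a function of $s_k(n) \bmod (k-1)$. For total balancedness of $a_{\mathrm{bal}}$, I would check that for each $q \in \NN$, $r \in \NN_0$ the mean $\EE_{n<N} a_{\mathrm{bal}}(qn+r) \to 0$; here one writes $qn+r$ in base $k$, uses the multiplicativity of $g$ to factor the contribution of the low-order digits (which are eventually periodic in $n$) from the high-order digits, and appeals to the equidistribution statement of \cite[Theorem 3]{DrmotaMorgenbesser-2012} applied to the relevant sub-automaton, exactly parallelling the computation already carried out in the major arc estimate inside the proof of Proposition \ref{prop:cancellation-quantitative-B} with $p/q$ replaced by $0$.

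The main obstacle I anticipate is extracting the precise quantitative statement from \cite[Theorem 3]{DrmotaMorgenbesser-2012} in the exact form needed — in particular making sure that the ``periodic part'' genuinely has period dividing $k-1$ rather than some larger modulus, and that the complement is totally balanced rather than merely balanced. This is a bookkeeping issue about which invariant of $n$ the leading-digit contribution of $g$ depends on; since $g(0^j w) = g(w)$ and $g$ is multiplicative, the leading-digit effect reduces to $\prod_i g(w_i)$, and the abelianisation of $G$ together with the relation forced by padding with zeros pins the relevant quotient down to a cyclic group whose order divides $k-1$. Once that is confirmed the rest is routine, so I expect the write-up to be short: state the group model, invoke the theorem, define $a_{\mathrm{per}}$ and $a_{\mathrm{bal}}$, and check the two properties.
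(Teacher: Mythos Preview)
Your approach is essentially identical to the paper's: the paper does not give a proof at all but merely records that the proposition ``is a direct consequence of \cite[Theorem 3]{DrmotaMorgenbesser-2012}'', which is precisely the reference you propose to invoke. Your write-up expands on how one extracts the decomposition (via the digit-sum modulo $k-1$, the ring structure of invertible sequences from Lemma~\ref{lem:invert-prod}, and the observation that a $(k-1)$-periodic sequence is itself invertible $k$-automatic), but none of this extra detail departs from the intended route --- it simply fills in what the paper leaves implicit.
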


Theorem \ref{thm:D} reduces to the following being the main result of this section.

\begin{proposition}\label{prop:cancel-quant-poly}
	Fix $k \geq 2$ and $d \in \NN$. Let $a \colon \NN_0 \to \Omega \subset \CC$ be a totally balanced invertible $k$-automatic sequence. Then there exists a constant $c > 0$ such that
	\begin{equation}
	\label{eq:562} 
	\sup_{ \substack{ p(x) \in \RR[x] \\ \deg p \leq d }} 
	\abs{ \EE_{n < N} a(n) e( p(n) ) } \ll N^{-c}.
	\end{equation}
\end{proposition}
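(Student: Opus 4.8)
The plan is to prove \eqref{eq:562} by induction on $d$, lowering the degree of the polynomial phase by the van der Corput inequality (Lemma \ref{lem:vdCorput}), just as in the proof of Corollary \ref{prop:cancellation-qualitative-polynomial}. The case $d=1$ is precisely Proposition \ref{prop:cancellation-quantitative-A}, since a totally balanced invertible sequence is in particular a totally balanced automatic sequence. So fix $d \geq 2$, assume the statement for all totally balanced invertible $k$-automatic sequences and all polynomials of degree $\leq d-1$, and write $a(n) = \pi(g(n))$ where $g \colon \Sigma_k^* \to G$ is a generalised Thue--Morse sequence valued in a finite group $G$ and $\pi \colon G \to \CC$.

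Given $p$ with $\deg p = d$, applying Lemma \ref{lem:vdCorput} to $x(n) = a(n)e(p(n))$ with a parameter $H$ (a small power of $N$) reduces the problem to bounding $\EE_{h<H}\abs{\EE_{n<N} a(n+h)\overline{a(n)}\,e(\Delta_h p(n))}$ by $N^{-c}$, where $\Delta_h p(n) = p(n+h) - p(n)$ has degree $\leq d-1$. The decisive use of invertibility is to control these inner averages uniformly in $h$. Fix $h \geq 1$, set $t = \ceil{\log_k h} + R$ with $R$ a large absolute constant, and split $n = k^t m + j$, $0 \leq j < k^t$. From $g(uv) = g(u)g(v)$ and $g(0) = \id_G$ one gets, whenever $j+h < k^t$,
\[
	a(n+h)\overline{a(n)} = \pi\bra{g(m)\,g(j+h)}\,\overline{\pi\bra{g(m)\,g(j)}} =: \Psi_{g(j),\,g(j+h)}\bra{g(m)},
\]
which depends on $m$ only through $g(m)$ and on $(j,h)$ only through $(g(j),g(j+h)) \in G\times G$; the remaining $j$ (those with $j+h \geq k^t$) form a fraction $\leq k^{-R}$ of $[k^t]$ and are discarded at cost $O(k^{-R})$. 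Hence, up to $O(k^{-R})$, the inner average is an average over $j < k^t$ of $\EE_{m<M} c_{j,h}(m)\,e(q_{j,h}(m))$, where $M \asymp N/k^t$, the polynomial $q_{j,h}(m) = \Delta_h p(k^t m + j)$ has degree $\leq d-1$ and leading coefficient a fixed nonzero integer multiple of $\alpha_d h$ ($\alpha_d$ the leading coefficient of $p$), and $c_{j,h}(n) = \Psi_{g(j),g(j+h)}(g(n))$ is an invertible $k$-automatic sequence generated by an automaton with the \emph{same} transition structure as $g$ — hence of size $\leq \abs{G}$, independent of $h$ and $j$ — and one of at most $\abs{G}^2$ output functions.

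By Proposition \ref{prop:invert-decomp}, split $c_{j,h} = c_{j,h}^{\mathrm{per}} + c_{j,h}^{\mathrm{bal}}$ with $c_{j,h}^{\mathrm{per}}$ being $(k-1)$-periodic and $c_{j,h}^{\mathrm{bal}}$ totally balanced and invertible. As only finitely many sequences $c_{j,h}^{\mathrm{bal}}$ occur, the inductive hypothesis applies to all of them with one common constant $c_{d-1}>0$, so $\abs{\EE_{m<M}c_{j,h}^{\mathrm{bal}}(m)e(q_{j,h}(m))} \ll M^{-c_{d-1}}$ uniformly, which after averaging over $h,j$ gives a genuine power saving in $N$. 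Expanding $c_{j,h}^{\mathrm{per}}$ in a finite Fourier series modulo $k-1$ reduces the remaining contribution to pure Weyl sums $\EE_{m<M} e(\tilde q(m))$ with $\deg\tilde q \leq d-1$ and leading coefficient again a nonzero integer multiple of $\alpha_d h$. These are handled by the standard major/minor arc dichotomy, organised into a recursion on $\log_k N$ exactly as the trivial / minor-arc / major-arc / neighbourhood-of-$0$ steps of Proposition \ref{prop:cancellation-quantitative-B}: if $p$ is major arc at the relevant scale, then $e(p(n))$ is, up to a uniformly small error on $[N]$, a periodic sequence, so $\EE_{n<N}a(n)e(p(n))$ is $\ll N^{-c}$ directly by total balance (Corollary \ref{cor:antiper=>strongly-antiper}) and van der Corput is not used; if $p$ is minor arc, then $\alpha_d h$ — hence the leading coefficient of $\tilde q$ — is non-major for all $h<H$ outside an exceptional set of relative size $\ll N^{-c}$, by a Weyl-type counting lemma, so Weyl's inequality gives a power saving for the good $h$ and the trivial bound suffices for the rest. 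Propagating the dichotomy through the $d$ induction steps (each worsening the admissible denominators and approximation quality by a bounded amount) yields \eqref{eq:562}.

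The step I expect to be the main obstacle — and the reason invertibility, not merely total balance, is required — is the uniformity in $h$ just described. For a general totally balanced automatic sequence $a$, the shifted products $a(n+h)\overline{a(n)}$ need neither admit a periodic-plus-totally-balanced decomposition nor be producible by automata of bounded size, so neither the inductive hypothesis nor Corollary \ref{cor:antiper=>strongly-antiper} would be available with constants independent of $h$. It is the multiplicativity $g(uv)=g(u)g(v)$ that forces the high-order digits of $n$ and $n+h$ to interact with $p$ only through the boundedly many auxiliary sequences $c_{j,h}$, and thereby makes the inductive machinery go through; the secondary technical point is the bookkeeping of the major/minor arc parameters across the $d$ levels of the induction.
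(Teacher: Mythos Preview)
Your setup through van der Corput, the use of the group relation $g(uv)=g(u)g(v)$ to express $a(n+h)\overline{a(n)}$ via finitely many invertible sequences $c_{j,h}$, and the periodic-plus-balanced splitting from Proposition~\ref{prop:invert-decomp} all match the paper's argument (the paper phrases the same reduction through $\cN_k(a)$ rather than through $G$, but the content is identical). The inductive treatment of the balanced part is also correct.

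The gap is in your major arc endgame. The failure of the Weyl sums $\EE_{m<M} e(\tilde q(m))$ to be small constrains only the coefficients $\alpha_2,\dots,\alpha_d$ of $p$, because the linear coefficient $\alpha_1$ is annihilated by $\Delta_h$ and is therefore completely unconstrained. It follows that $e(p(n))$ is \emph{not} approximately periodic on $[N]$; what one actually obtains (this is the content of Lemma~\ref{lem:poly-decomp}) is a decomposition $p=\tilde p+r$ with $e(r(n))$ periodic of period $N^{O(\delta)}$ and $\tilde p$ merely \emph{approximately linear}: $\tilde p(n_0+n)=\beta_0+\beta_1 n+O(n^2 N^{-1+O(\delta)})$ for $\beta_1\in\RR$ that can be arbitrary. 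Since $e(\beta_1 n)$ need not be close to any periodic sequence, Corollary~\ref{cor:antiper=>strongly-antiper} cannot conclude the argument. The paper instead splits $[k^L]$ into blocks of length $k^{L/3}$, on each of which $\tilde p$ is linear up to $o(1)$; the block-restricted sequence $u\mapsto a(vu)$ lies in the co-kernel $\cN_k'(a)$ (still totally balanced), and one applies the \emph{linear} estimate of Proposition~\ref{prop:cancellation-quantitative-A} (equivalently~\ref{prop:cancellation-quantitative-B}) --- not total balance against periodic sequences --- to finish. In short, your dichotomy must terminate at ``linear phase'' rather than at ``periodic'', and the base case of the induction has to be invoked a second time at that point.
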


\begin{proof}[Proof of Theorem \ref{thm:D}, assuming Proposition \ref{prop:cancel-quant-poly}]
Immediate by Proposition \ref{prop:reduction:A}.
\end{proof}

The following technical lemma will be crucial in the argument. Recall that $\Delta_h p(n) := p(n+h) - p(n)$.

\begin{lemma}\label{lem:poly-decomp}
	Fix $d \geq 2$. Let $p(x) \in \RR[x]$ be a polynomial of degree $\leq d$, and suppose that for some $M,N,H \in \NN$ and $\e > 0$ there exist $m,h \in \NN_0$ with $1 \leq h < H$ and $m < M$ such that
\begin{equation}
		\abs{ \EE_{n < N} e(\Delta_h p(M n + m)) } \geq \e.
		\label{eq:942}
\end{equation}
	Then, there exists a decomposition $p(x) = \tilde p(x) + r(x)$, where $e(r(n))$ is periodic with period $\ll H^{O(1)} M^{O(1)}$ and $\tilde p(n)$ is approximately linear in the sense that for each $n_0 \in \ZZ$ there exist $\beta_0(n_0), \beta_1(n_0) \in \RR$ such that for all $n \in \ZZ$ we have 
	\begin{equation}
	\label{eq:944}
	\tilde p(n_0 + n) = \beta_0(n) + \beta_1(n) n + O\bra{\frac{n^2}{N}\bra{1+ \frac{\abs{n_0} + \abs{n} }{N}}^{d-2} \frac{H^{O(1)} M^{O(1)}}{\e^{O(1)} } },
	\end{equation}
where the implicit constants depend on $d$ only.
	In fact, $\beta_0(n) = \tilde p(n)$ and $\beta_1(n) = \Delta_1 \tilde p(n)$.
\end{lemma}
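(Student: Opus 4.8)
\textbf{Proof proposal for Lemma \ref{lem:poly-decomp}.}

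The plan is to argue by Weyl-type differencing starting from the hypothesis \eqref{eq:942}, extract rational approximations to all the top coefficients of $p$, peel those off into the periodic part $r(x)$, and then Taylor-expand what remains. Write $p(x) = \sum_{i=0}^d c_i x^i$. First I would apply the van der Corput / Weyl differencing inequality $d-2$ more times to \eqref{eq:942} (which already involves one difference $\Delta_h$): after $j$ further differences along increments in a progression of modulus a power of $M$, one obtains a lower bound of the shape $\EE_{n < N'} e( (d-j-1)! \, c_d\, M^{?} \,h\,h_1\cdots h_j\, n + \cdots ) \gg \e^{O(1)}$ for a positive proportion of choices of the auxiliary increments $h_1,\dots,h_j$, where $N' \asymp N$. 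Taking $j = d-2$ leaves a genuinely linear exponential sum in $n$, and by the quantitative Kronecker / Dirichlet estimate (Proposition \ref{prop:dicho-linear}) the linear coefficient must lie within $O(1/(\e^{O(1)} N))$ of a rational with denominator $\ll 1/\e^{O(1)}$. Unwinding the differencing (as in the standard proof that a polynomial with a large Weyl sum has all its coefficients near rationals, cf.\ \cite{GreenTao-2012}) shows that for each $i \ge 2$ there is a rational $a_i/q_i$ with $q_i \ll H^{O(1)} M^{O(1)} \e^{-O(1)}$ such that $|c_i - a_i/q_i| \ll H^{O(1)} M^{O(1)} / (\e^{O(1)} N^{i})$ — the $N^{i}$ in the denominator because each of the $i$ differencing steps that isolates $c_i$ contributes a factor $N$ to the length over which cancellation is detected while the increments run up to $\ll H \cdot (\text{power of }M)$.

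Given these approximations, I would set $r(x) = \sum_{i=2}^d (a_i/q_i) x^i$ and $\tilde p(x) = p(x) - r(x) = \sum_{i=2}^d (c_i - a_i/q_i) x^i + c_1 x + c_0$. Since $r$ has rational coefficients with common denominator $q := \mathrm{lcm}(q_2,\dots,q_d) \ll H^{O(1)} M^{O(1)} \e^{-O(1)}$, the sequence $e(r(n))$ is periodic with period dividing $q$ (indeed period $\ll H^{O(1)} M^{O(1)}$ after absorbing the $\e$-dependence into the $O(1)$ exponents, which is legitimate because $\e$ can be taken bounded below in terms of $d$ whenever the conclusion is non-vacuous — more carefully one keeps the $\e^{-O(1)}$ and notes the statement as written allows it inside the $H^{O(1)}M^{O(1)}$ bound only if $\e$ is bounded below; I would state it with the $\e$-dependence explicit if needed). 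For the approximate-linearity claim, fix $n_0$ and Taylor-expand $\tilde p$ about $n_0$:
\begin{equation*}
\tilde p(n_0 + n) = \tilde p(n_0) + \tilde p'(n_0)\, n + \sum_{i=2}^{d} \binom{?}{?}\text{-terms involving } \tilde p^{(i)}(n_0)\, n^i / i!,
\end{equation*}
so it suffices to bound $|\tilde p^{(i)}(n_0)| \, |n|^i$ for $2 \le i \le d$. Now $\tilde p^{(i)}(n_0) = \sum_{\ell \ge i} \ell(\ell-1)\cdots(\ell-i+1) (c_\ell - a_\ell/q_\ell) n_0^{\ell - i}$, and using $|c_\ell - a_\ell/q_\ell| \ll H^{O(1)}M^{O(1)} \e^{-O(1)} N^{-\ell}$ we get $|\tilde p^{(i)}(n_0)| \ll H^{O(1)}M^{O(1)}\e^{-O(1)} N^{-i} (1 + |n_0|/N)^{d-i}$, whence $|\tilde p^{(i)}(n_0)|\,|n|^i \ll \frac{n^2}{N}\,(\text{stuff})$ after bounding $|n|^{i-2}/N^{i-2} \le (1 + (|n_0|+|n|)/N)^{d-2}$ crudely; collecting the worst term over $i$ gives exactly \eqref{eq:944}, and $\beta_0(n) := \tilde p(n)$, $\beta_1(n) := \Delta_1 \tilde p(n) = \tilde p(n+1) - \tilde p(n)$ satisfy the stated identities up to the same error (note $\Delta_1 \tilde p(n)$ differs from $\tilde p'(n)$ by $O(\sup |\tilde p''|) $, which is absorbed).

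The main obstacle I expect is the bookkeeping in the differencing step: one must track precisely how the modulus of the arithmetic progression grows (it stays a power of $M$ times a product of the chosen increments), ensure that the "positive proportion of increments" survives each application of van der Corput (a pigeonhole/Cauchy--Schwarz argument with quantitative loss $\e^{O(1)}$ at each stage), and correctly account for the fact that the increments $h, h_1, \dots, h_{d-2}$ range only up to $\ll H$ and powers of $M$, not up to $N$ — this is what produces the honest $N^{-i}$ (rather than $N^{-1}$) in each coefficient bound and hence the $n^2/N$ (rather than $n^2$) main term in \eqref{eq:944}. A secondary subtlety is confirming that passing from "large Weyl sum" to "all coefficients near rationals with controlled denominator" can be done with the polynomial-in-$(\e^{-1}, H, M)$ bounds claimed; this is exactly the content of the quantitative Weyl inequality as in \cite{GreenTao-2012} and I would cite it rather than reprove it, taking care only that the dependence on $\e$ and on the increment range enters the denominators in the way stated.
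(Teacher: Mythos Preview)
Your strategy is the same as the paper's --- apply quantitative Weyl to the polynomial $q(n)=\Delta_h p(Mn+m)$, extract rational approximations to the coefficients of $p$, split them off as $r$, and show the remainder is approximately linear --- but the paper's execution is tighter in two places where yours is loose or slightly wrong.

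First, the paper does \emph{not} iterate van der Corput by hand: it cites the Quantitative Weyl Theorem once (Green--Tao, Prop.~4.4) applied directly to $q(n)$, obtaining a single $Q\ll\e^{-O(1)}$ with $\fpa{Q\gamma_i}\ll 1/(N^i\e^{O(1)})$ for all $i$, where $q(n)=\sum\gamma_i\binom{n}{i}$. The passage from the $\gamma_i$ to the coefficients $\alpha_i$ of $p(n)=\sum\alpha_i\binom{n}{i}$ is then done by an explicit lower-triangular matrix with diagonal entries $M^{i}h$; inverting it (Cramer) shows that $Q'=M^{\binom{d}{2}}h^dQ$ satisfies $\fpa{Q'\alpha_i}\ll H^{O(1)}M^{O(1)}/(\e^{O(1)}N^{i-1})$ for $i\ge2$. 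Your ``unwinding the differencing'' is exactly this linear-algebra step, but you state the bound as $N^{-i}$, which is off by one: the degree drops by one in $\Delta_h$, so $\gamma_{i-1}$ is what controls $\alpha_i$, giving $N^{-(i-1)}$. (This doesn't break the final estimate --- $N^{-(i-1)}$ already yields \eqref{eq:944} --- but your justification ``$i$ differencing steps'' miscounts.) Your remark about the increments ranging ``up to $H$ and powers of $M$'' is also off: the Weyl differencing is in the variable $n$ over $[0,N)$, so increments run up to $N$; the $H,M$ enter only through the matrix inversion.

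Second, for approximate linearity the paper bounds the second \emph{difference} $\Delta_1^2\tilde p(n)$ directly and telescopes, which gives $\beta_1=\Delta_1\tilde p(n_0)$ on the nose rather than via your Taylor-expansion-plus-correction. This is a cosmetic difference. Your observation about the $\e^{-O(1)}$ appearing in the period is legitimate: the paper's $Q'$ carries a factor $Q\ll\e^{-O(1)}$ too, so the period bound as stated is slightly imprecise in both versions; in the application $\e^{-1}\asymp H$ so it washes out.
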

\begin{proof}

Fix a choice of $h$ and $m$. Let us expand 
$$p(n) = \sum_{i=0}^d \alpha_i \binom{n}{i},\quad \text{and} \quad 
\Delta_h p (M n + m) = \sum_{i=0}^{d-1} \gamma_i \binom{n}{i}.$$

 It follows from the Quantitative Weyl Theorem (see e.g.\ \cite[Prop.~4.4]{GreenTao-2012}) and \eqref{eq:942} that there exists $0 < Q \ll 1/\e^{O(1)}$ such that 
$$\fpa{ Q \gamma_i } \ll 1/(N^{i} \e^{O(1)})$$
for all $i$ with $1 \leq i \leq d-1$. Note that $\gamma_i$ and $\alpha_i$ are related by a linear relation of the form
\begin{equation}
	\label{eq:655}
	\begin{bmatrix}
	\gamma_{d-1} \\
	\gamma_{d-2} \\
	\gamma_{d-3} \\	
	\vdots \\
	\gamma^{h,m}_0
	\end{bmatrix} = 
	\begin{bmatrix}
	M^{d} h & 0 & 0 & \dots & 0 \\
	\ast & M^{d-1} h & 0 & \dots & 0 \\
	\ast & \ast & M^{d-2} h & \dots & 0 \\
	\vdots & \dots & \vdots & \ddots & 0  \\
	\ast & \ast & \ast & \dots & Mh \\	
	\end{bmatrix} \cdot
	\begin{bmatrix}
	\alpha_{d} \\
	\alpha_{d-1} \\
	\alpha_{d-2} \\
	\vdots \\
	\alpha_1
	\end{bmatrix},
\end{equation}
where all the hidden coefficients are polynomials in $M,h$ and $m$ with total degree bounded in terms of $d$.

It follows from {the formula for the inverse matrix} that the numbers $M^{\binom{d}{2}} h^d \alpha_i$ ($1 \leq i \leq d$) are linear combinations of $\gamma_j$ ($0 \leq j < d$) with integer coefficients of magnitude $\ll H^{O(1)} M^{O(1)}$. 

Let us put $Q' = M^{\binom{d}{2}} h^d Q$, so that 
$$
	\fpa{ Q' \alpha_i } \ll H^{O(1)} M^{O(1)}/(N^{i-1} \e^{O(1)})
$$	
for each $i$ with $2 \leq i \leq d$. In other words, we may choose $K \ll H^{O(1)} M^{O(1)}/\e^{O(1)}$ such that for each $i$ with $2 \leq i \leq d$ we may decompose 
\begin{equation}\label{eq:943}
\alpha_i = \tilde\alpha_i + a_i/Q'
\end{equation}
 with $\norm{\alpha_i} \leq K/N^{i-1}$ and $a_i \in \ZZ$. For $i \in \{0,1\}$, declare $\tilde \alpha_i = \alpha_i$ and $a_i = 0$ so that \eqref{eq:943} continues to hold. We now put $\tilde p(x) = \sum_{i=0}^d \tilde \alpha_i \binom{n}{i}$ and $r(x) = \sum_{i=0}^d \bra{a_i/Q'} \binom{n}{i}$.
 
With these definitions, $e(r(n))$ is evidently periodic with period $Q'' = d! Q' \ll M^{O(1)}H^{O(1)}$. As for $\tilde p(n)$, for any $n \in \ZZ$ find that 
\begin{equation}
	\fpa{ \Delta_1^2 \tilde p (n) } = \fpa{ \sum_{i=2}^{d-2} \tilde \alpha_i \binom{n}{i-2} } \ll \bra{ 1+ \frac{\abs{n}}{N} }^{d-2} K/N. 
	\label{eq:945}
\end{equation}
	Fix a choice of $n_0$, and put $\beta_0 = \tilde p(n_0)$ and $\beta_1 = \Delta_1 \tilde p(n_0)$. A simple inductive argument shows that for all $n$ we have 
$$	\Delta_1 \tilde p(n_0 + n) = \Delta_1 \tilde p(n_0) + \sum_{j=0}^{n-1} \Delta_1^2 p(n_0+j) = \beta_1 + O\bra{ n \bra{1+\frac{\abs{n} + \abs{n_0}}{N}}^{d-2} K/N}.$$
By a similar reasoning, we obtain 
$$	\tilde p(n_0 + n) = \beta_0 + n\beta_1 + O\bra{ n^2 \bra{1 + \frac{\abs{n} + \abs{n_0}}{N}}^{d-2} K/N}. \qedhere$$
\end{proof}

\begin{remark*}
	We are particularly interested in the regime where $H,M,1/\e = N^{O(\delta)}$ for a small constant 	$\delta>0$ and $n_0,n = O(N)$. Then, the period of $e(r(n))$ is $N^{O(\delta)}$ and \eqref{eq:944} simplifies to
	$$\tilde p(n_0 + n) = \beta_0 + \beta_1 n + O(n^2 N^{-1 + O(\delta)}).$$
\end{remark*}

Another ingredient which we will need is some observations about co-kernels of invertible sequences. Pick a $k$-automatic invertible sequence $a \colon \Sigma_k^* \to \Omega$. Recall that $a$ has a representation $a = \pi \circ g$ where $g \colon \Sigma_k^* \to G$ is a generalised Thue--Morse sequence taking values in a group $G$ and $\pi \colon G \to \Omega$. In particular, $a$ ignores the leading $0$'s. There is a natural choice of an automaton $\cA = (S,s_0,\delta,\tau)$ producing $a$, namely $S = G$, $s_0 = h$, $\delta(s,j) = g(j)s$, $\tau(g) = \pi(gh^{-1})$, where $h \in G$ (one may take $h = \id_G$ for concreteness). Conversely, any automaton of the above form is invertible. 

It is clear from the above description that if $a \colon \Sigma_k^* \to \Omega$ is an invertible $k$-automatic sequence, then any sequence $b \in \cN_k(a) \cup \cN_k'(a)$ is also invertible. 

Note also that any invertible sequence automatically ignores the leading $0$'s. Hence, we may freely identify invertible sequences $\NN_0 \to \Omega$ with invertible sequences $\Sigma_k^* \to \Omega$. In particular, if $a \colon \NN_0 \to \Omega$ is an invertible sequence them it makes sense to consider $\cN_k'(a)$, and any $b \in \cN_k'(a)$ may naturally be viewed as a sequence $\NN_0 \to \Omega$. 

\begin{lemma}
	Let $a \colon \NN_0 \to \CC$ be an invertible and totally balanced $k$-automatic sequence, and let $b \in \cN_k'(a)$. Then $b$ is totally balanced. 
\end{lemma}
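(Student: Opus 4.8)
The key observation is that membership of a sequence $b$ in $\cN_k'(a)$ means $b$ is obtained from $a$ by changing the output function, so $b$ is constant on the same ``affine'' pieces as $a$ and there is a very tight relationship between the two. More precisely, fix an invertible automaton $\cA = (S,s_0,\delta,\tau)$ producing $a$ (which exists by the discussion preceding the statement, with $S = G$ a finite group and $\delta(\cdot,j)$ invertible for each $j$), and assume it ignores leading $0$'s. If $b \in \cN_k'(a)$ is given by $b(u) = a(vu)$ for some fixed $v \in \Sigma_k^*$, then $b$ is produced by $\cA' = (S, s_0, \delta, \tau')$ where $\tau'(s) = \tau(\delta(s,v))$. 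The plan is to reduce ``$b$ totally balanced'' to ``$a$ totally balanced'' by relating averages of $b$ along arithmetic progressions to averages of $a$.

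\textbf{Key steps.} First I would recall that totally balanced is equivalent to: for every $q \in \NN$ and $r \in \NN_0$, $\EE_{n<N} b(qn+r) \to 0$. By the closure properties of automatic sequences under restriction to arithmetic progressions, and since any progression $qn+r$ can be handled after replacing $k$ by a power $k^t$ and splitting, it suffices (after passing to $k^t$ and to sub-progressions) to show $\EE_{u \in \Sigma_k^L} b(u) \to 0$ as $L \to \infty$ — i.e., $b$ is balanced — together with the same for all restrictions; but in fact the cleanest route is to observe that for $v \in \Sigma_k^\ell$ and any $L > \ell$, a word $u \in \Sigma_k^L$ factors uniquely as $u = u' w$ with $w \in \Sigma_k^\ell$ and $u' \in \Sigma_k^{L-\ell}$, and the automaton structure gives $b(u) = a(vu) $; one then needs to understand $\delta(s_0, u' w)$. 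The point is that $v$ and $w$ together determine which ``co-kernel sequence'' we are really looking at, and summing over $w$ realizes $b$ as an average of the kernel sequences of $a$. Concretely, $\EE_{n < k^L} b(n) = \EE_{m < k^\ell} \EE_{n' < k^{L-\ell}} a(v \cdot (n')_k^{L-\ell} \cdot (m)_k^\ell)$ roughly, and each inner average over $n'$ is, up to the fixed suffix, an average of some $a(k^\ell n' + m')$ — i.e. of elements of $\cN_k(a)$ composed with a co-kernel operation. Since $a$ is totally balanced, each such average tends to $0$ (using Corollary \ref{cor:antiper=>strongly-antiper}, or just the definition applied to $a$ restricted to the progression $m' \pmod{k^\ell}$), and averaging finitely many things that tend to $0$ tends to $0$. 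The same computation applies verbatim after restricting $n$ to any arithmetic progression $qn + r$, which gives total balancedness of $b$.

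\textbf{Alternative, cleaner approach.} Actually the slickest argument uses Proposition \ref{prop:violet=>very-violet} and Corollary \ref{cor:antiper=>strongly-antiper} together with the reflection symmetry already recorded in Section \ref{sec:Definitions}: recall $a$ is $k$-automatic iff $a \circ R$ is, and $\cN_k'(a)$ consists of sequences $c \circ R$ with $c \in \cN_k(a \circ R)$. So I would instead argue: ``$a$ totally balanced'' should transfer to ``$a \circ R$ totally balanced'' — but this is false in general for non-invertible sequences, which is exactly why invertibility is needed. For invertible $a = \pi \circ g$ with $g$ a generalised Thue--Morse sequence, $g(R(u)) = g(u_0)g(u_1)\cdots g(u_{\ell-1})$ reads the digits in reversed order; since $G$ need not be abelian this is genuinely different, but invertibility of each $\delta(\cdot,j)$ is precisely what lets one relate the two. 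I would use the decomposition $a = a_{\mathrm{per}} + a_{\mathrm{bal}}$ from Proposition \ref{prop:invert-decomp} applied to $b$: since $b$ is invertible, it decomposes as $b = b_{\mathrm{per}} + b_{\mathrm{bal}}$ with $b_{\mathrm{per}}$ being $(k-1)$-periodic. It remains to show $b_{\mathrm{per}} = 0$. But $b_{\mathrm{per}}(n)$ depends only on the residue of (essentially) the digit sum of $n$ mod $k-1$, and the digit sum is invariant under reflection $R$; comparing the analogous decomposition of $a$ (for which $a_{\mathrm{per}} = 0$ since $a$ is totally balanced) with that of $b$ via the relation $b(u) = a(vu)$ and the explicit formula in \cite[Theorem 3]{DrmotaMorgenbesser-2012}, one sees $b_{\mathrm{per}}$ must also vanish. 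Hence $b = b_{\mathrm{bal}}$ is totally balanced.

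\textbf{Main obstacle.} The delicate point is handling the non-commutativity of $G$ and the leading/trailing digit bookkeeping: the co-kernel operation changes the output function, and one must check that the resulting periodic part (which lives in the $(k-1)$-periodic ``digit-sum mod $k-1$'' piece) is genuinely killed, i.e. that passing from $a$ to $b \in \cN_k'(a)$ cannot create a nonzero periodic component out of nothing. I expect the cleanest fix is the first approach: write $\EE_{n<N} b(qn+r)$ as a finite average of quantities $\EE_{n'<N'} a(k^\ell n' + m')$ over $m'$ in a fixed congruence class (plus a negligible boundary term of size $O(1/N')$ from non-powers-of-$k$, handled exactly as in the proof of Proposition \ref{prop:cancellation-qualitative}), and invoke that $a$ is totally balanced for each such inner average. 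The verification that these inner averages are indeed (restrictions of) sequences in $\cN_k(a)$ — hence totally balanced by Corollary \ref{cor:antiper=>strongly-antiper} or directly — is routine given the machinery already developed, so the whole statement follows with no serious new input.
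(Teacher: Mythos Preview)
Your first approach contains the right kernel of an idea and is essentially what the paper does, but you have obscured it with an unnecessary factorisation and some terminological confusion. The key identity is simply this: if $b(u)=a(vu)$ with $\abs{v}=\ell$, then for every $L$ and every $n<k^{L}$ one has $b(n)=a([v]_k k^{L}+n)$ (invertibility enters only to guarantee that $b$ ignores leading $0$'s, so this is well-defined). Consequently
\[
\sum_{n<k^{L}} b(n)\,e(nr/q)\;=\;e(-[v]_k k^{L} r/q)\sum_{m=[v]_k k^{L}}^{([v]_k+1)k^{L}-1} a(m)\,e(mr/q),
\]
which is a difference of two partial sums of $a(m)e(mr/q)$. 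Total balancedness of $a$ says each such partial sum is $o(M)$, and since the endpoints are $\asymp k^{L}$ the whole thing is $o(k^{L})$. The paper then passes from $N=k^{L}$ to general $N$ by the usual dyadic decomposition of $[N]$ into intervals $[mk^{l},(m+1)k^{l})$. Your splitting $u=u'w$ and subsequent talk of ``kernel sequences of $a$ composed with a co-kernel operation'' is a detour that reproduces this same computation in a more convoluted form; it works, but the factorisation into a suffix $w$ buys you nothing.

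Your ``cleaner'' second approach via Proposition~\ref{prop:invert-decomp} is, as written, circular: you decompose $b=b_{\mathrm{per}}+b_{\mathrm{bal}}$ and want $b_{\mathrm{per}}=0$, but to identify $b_{\mathrm{per}}$ with a shift of $a_{\mathrm{per}}$ you effectively need to know that $u\mapsto a_{\mathrm{bal}}(vu)$ is totally balanced, which is the very lemma you are proving (applied to $a_{\mathrm{bal}}$ in place of $a$). One can rescue it by appealing to the explicit formula in \cite{DrmotaMorgenbesser-2012}, but then the argument is no longer self-contained, and in any case is longer than the direct two-line computation above.
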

\begin{proof}
	Fix $q \in \NN$; we will show that any $b \in \cN_k'(a)$ does not correlate with $q$-periodic sequences. Denote
	
	$$ A(L) = \max_{b \in \cN'_k(a)} \max_{0 \leq r < q} \abs{ \EE_{n < k^L} b(n) e(nr/q)}.$$
	For any $N$, we may partition $[N]$ into disjoint intervals of the form $I_{m,l} = [m k^l, (m+1)k^l)$ where each value of $l$ appears at most $k-1$ times. It follows that for any ${b \in \cN'_k(a)}$ and any $r$ with ${0 \leq r < q}$ we have
	
	$$ \abs{ \EE_{n < k^L} b(n) e(nr/q)} \ll \sum_{l \leq \log_k N} \frac{k^l}{N} A(l).$$
	Hence, it will suffice to prove that $A(L) \to 0$ as $L \to \infty$. Fix $b \in \cN_k'(a)$. There exists some $v \in \Sigma_k^*$ such that $b(u) = a(vu)$ for all $u \in \Sigma_k^*$. Hence
	
	$$
	 \max_{0 \leq r < q} \abs{ \EE_{n < k^L} b(n) e(nr/q)} =
	 \max_{0 \leq r < q} \frac{1}{K^L} \abs{ \sum_{ n = [v]_k k^L }^{([v]_k+1) k^L - 1 } a(n) e(nr/q)} \to 0
	$$
	as $L \to \infty$, because $a$ is totally balanced.	
\end{proof}

\begin{proof}[Proof of Proposition \ref{prop:cancel-quant-poly}]

{We proceed by induction on $d$. The case $d=1$ follows from Proposition \ref{prop:cancellation-quantitative-A}.  Fix $d \in \NN$ and assume that the claim holds for $d-1$. 

Using the analogue of Lemma \ref{lem:wlog-N=k^L}, we may assume that $N=k^L$ for some $L$. (In fact, we may assume also that $L$ is divisible by a specified large integer $D$ in order to ensure later in the argument that various small multiples of $L$ are also integers.)

Let $p \in \RR[x]$ with $\deg p = d$ and assume  without loss of generality that $p(0) = 0$.  The implicit (and explicit) constants below are independent of $p$.}
	
	We begin by using the van der Corput inequality \ref{lem:vdCorput} with $H = k^{\delta L}$, where $\delta > 0$ is a small constant to be determined later, and we assume for the sake of clarity that $\delta L$ is an integer. We obtain
	\begin{equation}
	\label{eq:563} 
	\abs{ \EE_{n < N} a(n) e( p(n) ) }^2 \ll
	\EE_{h < H}\abs{ \EE_{n < N} e\bra{\Delta_h p(n)} a(n) \bar{a}(n+h) }  + O(H/N).
	\end{equation}

The inner average on the right hand side of \eqref{eq:563} is again an average of a $k$-automatic sequence weighted by a polynomial phase (cf.\ Corollary \ref{prop:cancellation-qualitative-polynomial}). However, all of the sequences $a(n) \bar{a}(n+h)$ may potentially be different, which makes tracing error terms difficult. To overcome this, introduce an additional average. Let $M = k^{2 \delta L}$. We have for any $h$ with $h < H$ that

\begin{align*}
 \abs{ \EE_{n < N} e\bra{\Delta_h p(n)} a(n) \bar{a}(n+h) } 
 & \leq  \EE_{m < M} \abs{ \EE_{n < N/M} e\bra{q_{h,m}(n)} a(Mn + m) \bar{a}(Mn+ m + h) }
\\ & \leq \EE_{m < M}\max_{b,b' \in \cN_k(a)} \abs{ \EE_{n < N/M} e\bra{q_{h,m}(n)} b(n) \bar{b}'(n) } + O(H/M)
\end{align*}
where $q_{h,m}(n) = \Delta_h p(Mn + m)$. In the last line we use the fact that $a(Mn + m + h) = b'(n)$ for some $b' \in \cN_k(a)$, provided that $m < M - h$. Importantly, the maximum is taken over a finite set.

Since each of the sequences $ b(n) \bar{b}'(n)$ appearing above is invertible by Lemma \ref{lem:invert-prod},  we may decompose it into a periodic and aperiodic part by Proposition \ref{prop:invert-decomp}. Applying the inductive assumption to the aperiodic part and removing the periodic component by splitting the average further, we conclude that

\begin{align*}
 \abs{ \EE_{n < N} e(\Delta_h p(n)) a(n) \bar{a}(n+h) } 
 & \ll \max_{\substack{ 0 \leq m < M' }} \abs{ \EE_{n < N/M'} e\bra{q_{h,m}'(n)}} + O((N/M)^{-c_1} + H/M)
\end{align*}
where $M' = (k-1)M$, $q'_{h,m}(n) = \Delta_h p(M'n + m)$, and $c_1 > 0$ is a constant depending only on $a$, originating from the inductive assumption. Plugging this back into \eqref{eq:563}, we conclude that the claim \eqref{eq:562} holds with $c = \frac{1}{2} \min(c_1,\delta)$, unless for many $h$ (namely, $\gg N^{\delta/2}$ values of $h$) there exists $m$ with $0 \leq m < M'$ such that

\begin{equation}
	\label{eq:654}
 \abs{ \EE_{n < N/M'} e\bra{q_{h,m}'(n) }} \gg N^{-\delta}.
\end{equation}

Suppose now that \eqref{eq:654} holds for some $h \neq 0$ and $m$. In this case, we may apply Lemma \ref{lem:poly-decomp} to obtain a decomposition $p(n) = \tilde p(n) + r(n)$ where $e(r(n))$ is periodic with period $Q = N^{O(\delta)}$ and $\tilde p$ is approximately linear in the sense that
$$\tilde p(n_0+n) = \beta_0(n_0) + n \beta_1(n_0) + O(n^2 N^{-1+O(\delta)})$$
for any $\abs{n},\abs{n_0} < N$, where $\beta_0(n_0),\beta_1(n_0) \in \RR$.

Assume for the sake of clarity that $L/3$ is an integer. We may then estimate
\begin{equation}
	\abs{ \EE_{n < N} a(n) e(p(n)) } 
	\leq \max_{ v \in \Sigma_k^{2L/3} } \abs{ \EE_{ u \in \Sigma_k^{L/3} } 
	a( vu ) e(\tilde p( [vu]_k )) e( r( [vu]_k )) },
	\label{eq:567}
\end{equation}
Note that $a(vu) = b(u)$ for some $b \in \cN_k'(a)$, dependent on $v$. Next, we observe that $e(\tilde p( [vu]_k )) = e(\beta_0) e(\beta_1 [u]_k) + O(N^{-1/3 + O(\delta)})$ for $\beta_0,\beta_1 \in \RR$, dependent on $v$. Finally, by periodicity, $e( r( [vu]_k )) = \sum_{j=0}^{Q-1} w_j e([u]_k j/Q)$, where $\abs{w_j} \leq 1$ for all $j$. Plugging these into \eqref{eq:567} we conclude that 
\begin{equation}
	\abs{ \EE_{n < N} a(n) e(p(n)) } 
	\leq Q \max_{b \in \cN_k'(a)} \sup_{\alpha \in \RR} \abs{ \EE_{ u \in \Sigma_k^{L/3} } 
	b( u ) e(\alpha [u]_k ) } + O(N^{-1/3 + O(\delta)}),
	\label{eq:568}
\end{equation}

Applying Proposition \ref{prop:cancellation-quantitative-B} (or \ref{prop:cancellation-quantitative-A}) to bound the averages on the right hand side of \eqref{eq:568}, we conclude that 
\begin{equation}
	\abs{ \EE_{n < N} a(n) e(p(n)) } \ll N^{-c_1/3 + O(\delta)} + O(N^{-1/3 + O(\delta)}),
	\label{eq:569}
\end{equation}
for a constant $c_1 > 0$. Assuming (as we may) that $\delta$ is small enough that the term $O(\delta)$ above is less than $\min(1,c_1)/3$, this finishes the argument.
\end{proof}

Having finished the proof of Theorem \ref{thm:D}, it is natural to ask about the extent to which results such as Proposition \ref{prop:cancel-quant-poly} can be generalised to other classes of automatic sequences. As the following example shows, even for relatively simple sequences, direct generalisations fail.

\begin{example}\label{ex:TM-shifted-product}
	Let $t(n) = (-1)^{s_2(n)}$ be the Thue--Morse sequence. 
	
	 Then $t(n)t(n+1) = (-1)^{\nu_2(n)+1}$ where $\nu_2(n)$ denotes the highest power of $2$ dividing $n$. Hence, $t(n)t(n+1)$ does not admit a decomposition into a periodic and aperiodic part. Note also that $\cN_2( t(n+1) ) = \{ \pm t(n), t(n+1) \}$. 
	 
	 In particular, the methods in the proof of Proposition \ref{prop:cancel-quant-poly} cannot be directly applied to the sequence $t(n+1)$. 
	 
\end{example}

\begin{remark*}
	The only point where the proof of Proposition \ref{prop:cancel-quant-poly} essentially uses the assumption that the sequence $a(n)$ is invertible is to ensure that any sequence $b(n)$ in the (multiplicative) group generated by $\cN_k(a)$ has a decomposition $b(n) = b_{\mathrm{per}}(n) + b_{\mathrm{bal}}(n)$ into a periodic and totally balanced part. We believe that similar results can be proven for any class of automatic sequences having this property.
\end{remark*} 

\makeatletter{} 

\bibliographystyle{alphaabbr}
\bibliography{bibliography}

\end{document}